\newcommand{\norm}[1]{\left\| #1 \right\|}  
\newcommand{\scprd}[1]{\left\langle #1 \right\rangle}  
\renewcommand{\d}{\,\mathrm{d}} 
\newcommand{\e}{\mathrm{e}} 
\newcommand{\ddt}[1]{\frac{\mathrm{d}} {\mathrm{d}t} #1}   
\newcommand{\N}{\mathbb{N}}  
\newcommand{\R}{\mathbb{R}}
\newcommand{\dist}{\operatorname{dist}}
\DeclareMathOperator*{\esssup}{ess-sup}  
\newcommand{\eps}{\varepsilon}
\renewcommand{\phi}{\varphi}
\newcommand{\ul}{\underline}
\newcommand{\ol}{\overline}
\numberwithin{equation}{section}
\newtheorem{thm}{Theorem}[section]
\newtheorem{cor}[thm]{Corollary}
\newtheorem{prop}[thm]{Proposition}
\newtheorem{lm}[thm]{Lemma}
\newtheorem{cond}[thm]{Condition}
\theoremstyle{definition} 
\theoremstyle{definition} 
\theoremstyle{definition}
\title{Asymptotic gain results for attractors of semilinear systems}
\author{Jochen Schmid$^{1,2}$, Oleksiy Kapustyan$^{3}$, Sergey Dashkovskiy$^{1}$ \\  
\small $^1$ Institute for Mathematics, University of W\"urzburg, 97074 W\"urzburg, Germany\\
\small $^2$ Fraunhofer Institute for Industrial Mathematics (ITWM), 67663 Kaiserslautern, Germany\\ 
\small $^3$ Kiev National Taras Shevchenko University, 01033 Kiev, Ukraine\\
\small jochen.schmid@itwm.fraunhofer.de, alexkap@univ.kiev.ua,\\ \small sergey.dashkovskiy@mathematik.uni-wuerzburg.de}  
\date{}
\begin{document}

\maketitle

\begin{abstract}
\small{ \noindent 
We establish asymptotic gain along with input-to-state practical stability results 
for disturbed semilinear systems w.r.t.~the global attractor of the respective undisturbed system. We apply our results to a large class of nonlinear reaction-diffusion equations comprising disturbed Chaffee--Infante equations, for example.    
}
\end{abstract}

{ \small \noindent 
Index terms: asymptotic gain property, input-to-state practical stability, global attractors, semilinear systems, infinite-dimensional systems, nonlinear reaction-diffusion equations
}

\section{Introduction}

In this paper, we are concerned with disturbed infinite-dimensional semilinear systems of the form
\begin{align} \label{eq:semilin-evol-eq-u,intro}
\dot{x}(t) = Ax(t) + g(x(t)) + h u(t).
\end{align}
In this equation, $A: D(A) \subset X \to X$ is a linear semigroup generator on $X$, $g$ a nonlinear function in $X$, $h: U \to X$ is a bounded linear operator from $U$ to $X$, $X$ and $U$ are Banach spaces, and $u: [0,\infty) \to U$ is an external disturbance signal.  
In particular, we consider disturbed nonlinear reaction-diffusion equations of the form
\begin{align} \label{eq:react-diffus-eq,intro}
\begin{split}
\partial_t y(t,\zeta) &= \Delta y(t,\zeta) + g(y(t,\zeta)) + h(\zeta) u(t) \qquad (\zeta \in \Omega) \\
y(t,\zeta) &= 0 \qquad (\zeta \in \partial \Omega)
\end{split} 
\end{align}
on a bounded domain $\Omega \subset \R^d$, where \textcolor{black}{$d \in \N$}, \textcolor{black}{$\Delta$ is the Dirichlet Laplacian and} $h \in X := L^2(\Omega,U)$ and $U := \R$.
We are interested in the asymptotic behavior of the solutions to~\eqref{eq:semilin-evol-eq-u,intro} and~\eqref{eq:react-diffus-eq,intro} w.r.t.~a global attractor of the respective undisturbed system, that is, system~\eqref{eq:semilin-evol-eq-u,intro} or~\eqref{eq:react-diffus-eq,intro} with $u := 0$. So, we assume that the undisturbed system has a global attractor $\Theta$, 
that is, a compact invariant and uniformly attractive subset of $X$. 
And then we show that -- under suitable dissipativity and compactness assumptions -- the 
disturbed system~\eqref{eq:semilin-evol-eq-u,intro} or~\eqref{eq:react-diffus-eq,intro} is of asymptotic gain and input-to-state practically stable w.r.t.~$\Theta$. 
Spelled out, the asymptotic gain property means that there exists a comparison function $\gamma \in \mathcal{K}$ such that
\begin{align} \label{eq:AG-def,intro}
\limsup_{t\to\infty} \norm{x(t,x_0,u)}_{\Theta} \le \gamma(\norm{u}_{\infty})
\end{align}
for every $(x_0,u) \in X \times \mathcal{U}$, and input-to-state practical stability, in turn, means that there exist comparison functions $\beta \in \mathcal{KL}$, $\gamma \in \mathcal{K}$ and a constant $c \in [0,\infty)$ such that
\begin{align} \label{eq:ISpS-def,intro}
\norm{x(t,x_0,u)}_{\Theta} \le \beta(\norm{x_0}_{\Theta},t) + \gamma(\norm{u}_{\infty}) + c \qquad (t \in [0,\infty))
\end{align}
for every $(x_0,u) \in X \times \mathcal{U}$. If $c = 0$ in~\eqref{eq:ISpS-def,intro}, one speaks of input-to-state stability. In the relations~\eqref{eq:AG-def,intro} and~\eqref{eq:ISpS-def,intro} above, $x(\cdot,x_0,u)$ denotes the solution of~\eqref{eq:semilin-evol-eq-u,intro} with initial value $x_0$ -- the solution being understood in some appropriate sense. 
In the special case~\eqref{eq:react-diffus-eq,intro}, this will be the mild sense or the weak sense. 
Also, the set $\mathcal{U}$ of admissible disturbances is a suitable subset of $L^{\infty}([0,\infty),U)$,
\textcolor{black}{
the distance from a point to a set is defined by}
\begin{align} \label{eq:norm-Theta,def}
\norm{x}_{\Theta} := \dist(x,\Theta) := \inf_{\theta \in \Theta} \norm{x-\theta} \qquad (x \in X),
\end{align}
\textcolor{black}{while for any two sets $A,B\subset X$ the Hausdorff semi-distance $\dist(A,B)$ is defined by
\begin{align*}
\dist(A,B):=\sup\limits_{x\in A} \dist(x,B) = \sup\limits_{x\in A}\inf\limits_{y\in B}\|x-y\|.
\end{align*}
And finally,} $\mathcal{KL}$, $\mathcal{K}$ denote the standard comparison function classes recalled in~\eqref{eq:comparison-fct-classes} below. 
\smallskip

In order to prove our asymptotic gain result, we embed the equation~\eqref{eq:semilin-evol-eq-u,intro} 
for every $u \in \mathcal{U}$ into a whole family of equations
\begin{align} \label{eq:semilin-evol-eq-v,intro}
\dot{x}(t) = Ax(t) + g(x(t)) + h v(t) \qquad (v \in \mathcal{V}(u))
\end{align}
parametrized by $v \in \mathcal{V}(u)$, where $\mathcal{V}(u)$ is a suitably chosen subset of $L^2_{\mathrm{loc}}([0,\infty),U)$ satisfying $u \in \mathcal{V}(u)$ and $\mathcal{V}(0) = \{0\}$ \textcolor{black}{(see~\eqref{eq:V(u)-def} below)}.
We then consider the dynamical map -- or, more precisely, the set-valued semiprocess -- $S_{\mathcal{V}(u)}$ corresponding to the family of equations~\eqref{eq:semilin-evol-eq-v,intro} and show 
that $S_{\mathcal{V}(u)}$ for every $u \in \mathcal{U}$ has a unique global attractor $\Theta_{\mathcal{V}(u)}$ which depends upper semicontinuously on $u$ in the sense that 
\begin{align} \label{eq:upper-semicontinuity,intro}
\dist( \Theta_{\mathcal{V}(u)}, \Theta ) := \sup_{\theta_u \in \Theta_{\mathcal{V}(u)}} \dist(\theta_u, \Theta) \longrightarrow 0 \qquad (u \to 0).
\end{align}
With the help of this upper semicontinuity \textcolor{black}{(Definition~1.4.1 of~\cite{AubinFrankowska} and the remark after it)}, we can then conclude our asymptotic gain result in a relatively simple way. 
\smallskip

\textcolor{black}{As is known from the literature~\cite{Robinson}, \cite{Temam}, the global attractor is one of the most interesting objects in the state space of a dissipative system. }
As far as we know, our results are essentially the first asymptotic gain and input-to-state practical stability results w.r.t.~attractors of infinite-dimensional systems like~\eqref{eq:semilin-evol-eq-u,intro} and, especially, of concrete partial differential equation systems like~\eqref{eq:react-diffus-eq,intro}. We are aware of one other input-to-state practical stability result w.r.t.~to attractors of semilinear systems~\cite{Mi18-ISpS}, but that result is a theoretical characterization of input-to-state practical stability in terms of rather abstract conditions which are probably not easily verifiable in practice. 
All input-to-state stability results for concrete \textcolor{black}{pde} systems -- like those from~\cite{DaMi13}, \cite{JaNaPaSc16}, \cite{JaSc18}, \cite{JSZ18}, \cite{KaKr16}, \cite{KaKr17}, \cite{MaPr11}, \cite{MiKaKr17}, \cite{MiP19}, \cite{ScZw18}, \cite{TaPrTa17}, \cite{Schwe19}, \cite{ZhZh17a}, \cite{ZhZh17b},  \textcolor{black}{\cite{ZhZh19}} -- however, establish input-to-state stability only w.r.t.~an equilibrium point $\theta$ of the respective undisturbed system and, without loss of generality, this equilibrium point is then assumed to be $\theta = 0$. 
In particular, the results from those papers do not cover the Chaffee--Infante equation \textcolor{black}{with a non-singleton attractor}, for example, that is, the reaction-diffusion equation~\eqref{eq:react-diffus-eq,intro} with nonlinearity $g$ given by
\begin{align*}
g(r) := \lambda(-r^3 + r) \qquad (r \in \R)
\end{align*} 
\textcolor{black}{with some $\lambda \in (0,\infty)$. Indeed, in general} the respective undisturbed system only has a non-singleton attractor $\Theta \supsetneq \{\theta\} = \{0\}$.  \textcolor{black}{See Section~11.5 of~\cite{Robinson} where it is shown that for $\Omega = (0,\pi)$ and $\lambda \in (n^2,(n+1)^2)$ with some $n \in \N$, the corresponding undisturbed Caffee--Infante equation has exactly $2n+1$ equilibrium points. In particular, it has a non-singleton global attractor whenever $\lambda > 1$.} With our results, by contrast, we can cover the Chaffee--Infante equation \textcolor{black}{with a non-singleton attractor} and 
many more nonlinearities. We refer to~\cite{KaKaVa15}, \cite{GoKaKaPa14}, \cite{GoKakaKh15} \cite{DaKaRo17} for other interesting results about non-trivial global attractors of nonlinear, impulsive, or even multi-valued semigroups. 
\smallskip

Also, our strategy of proving the asymptotic gain property~\eqref{eq:AG-def,intro} -- by embedding the original system~\eqref{eq:semilin-evol-eq-u,intro} into the family of systems~\eqref{eq:semilin-evol-eq-v,intro} and by then establishing the upper semicontinuity~\eqref{eq:upper-semicontinuity,intro} -- seems to be new, too. 
At least, this strategy of proof is completely different from more traditional approaches -- like input-to-state Lyapunov function approaches, for instance, which might come to mind first, in view of corresponding  finite-dimensional results~\cite{LiSoWa96}, \cite{SoWa96}. 
\smallskip

In the entire paper, we will use the following conventions and notations. We will write $\R^+_0 := [0,\infty)$, $X$ and $U$ will be Banach spaces over the reals $\R$, the norm of $X$ will be denoted simply by $\norm{\cdot}$ and similarly the scalar product of $X$, when existent, will be denoted by $\scprd{\cdot,\cdot\cdot}$. Also, for $u \in L^p_{\mathrm{loc}}(\R^+_0,U)$, $p \in [1,\infty) \cup \{\infty\}$, \textcolor{black}{$t_0>0$} we will write 
\begin{align*}
\norm{u}_{[0,t_0],p} := \norm{u|_{[0,t_0]}}_p 
\color{black}
:=
\begin{cases}
\big( \int_0^{t_0} |u(s)|^p \d s \big)^{1/p}, \qquad p < \infty, \\
\esssup_{s \in [0,t_0]} |u(s)|, \qquad p = \infty,
\end{cases} 
\color{black}
\end{align*}
for short. 
As usual, 
\begin{align*}
B_r(x_0) = B_r^{X}(x_0), \quad \ol{B}_r(x_0) = \ol{B}_r^{X}(x_0) \quad \text{and} \quad B_r(u_0) = B_r^{\mathcal{U}}(u_0), \quad  \ol{B}_r(u_0) = \ol{B}_r^{\mathcal{U}}(u_0)
\end{align*}
denote the open and closed balls in $X$ or $\mathcal{U}$ of radius $r$ around $x_0 \in X$ or $u_0 \in \mathcal{U}$ respectively. We will often use the notation~\eqref{eq:norm-Theta,def} and
\begin{align*}
B_r(\Theta) := \{x \in X: \norm{x}_{\Theta} < r \} 
\qquad \text{and} \qquad
\ol{B}_r(\Theta) := \{x \in X: \norm{x}_{\Theta} \le r \},
\end{align*}
as well as the notation 
$\dist(M,\Theta) := \sup_{x\in M} \norm{x}_{\Theta}$
for subsets $M, \Theta \subset X$. Also, $\mathcal{K}$, $\mathcal{K}_{\infty}$ and $\mathcal{KL}$ will denote the following standard classes of comparison functions:
\begin{gather}
\mathcal{K} := \{ \gamma \in C(\R^+_0,\R^+_0): \gamma \text{ strictly increasing with } \gamma(0) = 0 \} \notag \\
\mathcal{K}_{\infty} := \{ \gamma \in \mathcal{K}: \gamma \text{ unbounded} \}  \label{eq:comparison-fct-classes}\\
\mathcal{KL} := \{ \beta \in C(\R^+_0 \times \R^+_0,\R^+_0): \beta(\cdot,t) \in \mathcal{K} \text{ for } t \ge 0 \text{ and } \beta(s,\cdot) \in \mathcal{L} \text{ for } s > 0 \}, \notag
\end{gather}
where $\mathcal{L} := \{ \gamma \in C(\R^+_0,\R^+_0): \gamma \text{ strictly decreasing with } \lim_{t\to\infty} \gamma(t) = 0 \}$. And finally, when speaking merely of a semigroup -- as opposed to a nonlinear semigroup~\cite{Robinson}, \cite{Temam} -- we will mean a strongly continuous semigroup of linear operators~\cite{EnNa}, \cite{Pazy}.

\section{Some preliminaries}

\subsection{Semiprocesses} 

We begin by defining the central dynamical objects of this paper, namely semiprocess families. 
Suppose $\mathcal{V}$ is a 
topological space and $T(h)$ are maps leaving $\mathcal{V}$ invariant:
\begin{align} \label{eq:V-invariant-under-T}
T(h)\mathcal{V} \subset \mathcal{V} \qquad (h \in \R^+_0).
\end{align}
A family $(S_v)_{v\in \mathcal{V}}$ of maps $S_v: \Delta \times X \to X$ where $\Delta:= \{(t,s) \in \R^+_0 \times \R^+_0: t \ge s \}$ is called a \emph{semiprocess family (on $X$ with input space $\mathcal{V}$ and translation $T$)} iff the following conditions are satisfied:
\begin{itemize}
\item[(i)] $S_v$ is a semiprocess for every $v \in \mathcal{V}$, that is, 
\begin{align*}
S_v(s,s,x) = x \qquad \text{and} \qquad S_v\big(t,s,S_v(s,r,x)\big) = S_v(t,r,x)
\end{align*}
for all $t \ge s \ge r$, $x \in X$ and $v \in \mathcal{V}$
\item[(ii)] $S_v(t+h,s+h,x) = S_{T(h)v}(t,s,x)$ for all $t \ge s$, $h \ge 0$, $x \in X$ and $v \in \mathcal{V}$.
\end{itemize}


So, a semiprocess family simply consists of dynamical maps $S_v$ for every input $v \in \mathcal{V}$ and these dynamical maps are connected via the translation $T$. In all our results below, the input space $\mathcal{V}$ will be a subset of $L^2_{\mathrm{loc}}(\R^+_0,U)$ endowed with a suitable 
topology and the translation $T$ will be the left-translation semigroup on $L^2_{\mathrm{loc}}(\R^+_0,U)$, that is,
\begin{align} \label{eq:transl-sgr}
T(h)v = v(\cdot+h) \qquad (v \in L^2_{\mathrm{loc}}(\R^+_0,U), h \in \R^+_0). 
\end{align}
In our applications, we will often use the following alternative notations
\begin{align} \label{eq:alternative-notation-for-trajectories}
x(t,s,x_s,v) := S_v(t,s,x_s) \qquad \text{and} \qquad x(t,x_0,v) := S_v(t,0,x_0),
\end{align}
\textcolor{black}{where $x_s$ is the initial state of the Cauchy problem of~\eqref{eq:semilin-evol-eq-v,intro} at initial time $s$ (see Lemma~\ref{lm:max-mild-sol} below)}. 
It should be noticed that every semiprocess family satisfies the following so-called cocycle property:
\begin{align} \label{eq:cocycle-prop}
S_v(t+h,0,x) = S_v\big(t+h,h,S_v(h,0,x)\big) = S_{T(h)v}\big(t,0,S_v(h,0,x)\big)
\end{align}
for all $t, h \ge 0$, $x \in X$ and $v \in \mathcal{V}$.
It should also be noticed that semiprocess families are closely related to the  (forward-complete) dynamical systems with inputs from~\cite{MiWi16a}, \cite{Sc18-wISS}. Indeed, every semiprocess family $(S_u)_{u \in \mathcal{U}}$ with input space $\mathcal{U} \subset L^{\infty}(\R^+_0,U)$ and translation $T$ given by~\eqref{eq:transl-sgr} determines a dynamical system $(X,\mathcal{U},\phi)$ 
with inputs in the sense of~\cite{MiWi16a}, \cite{Sc18-wISS}
via
\begin{align} \label{eq:from-semiproc-family-to-dyn-syst-with-inputs}
\phi(t,x,u) := S_u(t,0,x) \qquad (t \in \R^+_0 \text{ and } (x,u) \in X \times \mathcal{U}),
\end{align}
provided that the trajectories $t \mapsto S_u(t,0,x)$ are continuous and that $\mathcal{U}$ is invariant under concatenations. And conversely, every dynamical system $(X,\mathcal{U},\phi)$ 
with inputs in the sense of~\cite{MiWi16a}, \cite{Sc18-wISS} and with $\mathcal{U} \subset L^{\infty}(\R^+_0,U)$ determines a semiprocess familiy $(S_u)_{u\in\mathcal{U}}$ with translation $T$ given by~\eqref{eq:transl-sgr} via
\begin{align} \label{eq:from-dyn-syst-with-inputs-to-semiproc-family}
S_u(t,s,x) := \phi(t-s,x,u(\cdot+s)) \qquad ((t,s) \in \Delta \text{ and } (x,u) \in X \times \mathcal{U}).
\end{align}

Whenever a semiprocess family $(S_v)_{v\in\mathcal{V}}$ is given, we will denote by $S_{\mathcal{V}}$ the corresponding \emph{set-valued semiprocess} which is defined by
\begin{align} \label{eq:set-valued-semiproc,def}
S_{\mathcal{V}}(t,s,x) := \big\{ S_v(t,s,x): v \in \mathcal{V} \big\}.
\end{align}
Similarly, for every subset $M \subset X$ we will write 
\begin{align*}
S_{\mathcal{V}}(t,s,M) := \bigcup_{v \in \mathcal{V}}  S_v(t,s,M) = \big\{ S_v(t,s,x): x \in M \text{ and } v \in \mathcal{V} \big\}.
\end{align*}
\textcolor{black}{With these notations, we obtain the following semigroup-like property~\cite{Robinson}, \cite{Temam} from the cocycle property~\eqref{eq:cocycle-prop} and the invariance property~\eqref{eq:V-invariant-under-T}:
\begin{align} \label{eq:sgr-property-S_V}
S_{\mathcal{V}}(t+h,0,M) \subset S_{T(h)\mathcal{V}}\big(t,0,S_{\mathcal{V}}(h,0,M)\big) \subset S_{\mathcal{V}}\big(t,0,S_{\mathcal{V}}(h,0,M)\big)
\end{align}
for all $t,h \ge 0$. We have to work with set-valued semiprocesses because for them (as opposed to the individual semiprocesses $S_v$ they consist of), a nice attractor theory analogous to the attractor theory for semigroups~\cite{Robinson}, \cite{Temam} can be developed. See the beginning of Section~3 for a more detailed explanation why we need to work with set-valued semiprocesses.}

\subsection{Attractors} 

We now move on to define attractors of set-valued semiprocesses. Suppose $(S_v)_{v\in \mathcal{V}}$ is a semiprocess family on $X$. A subset $\Theta_{\mathcal{V}} \subset X$ is then called a \emph{global attractor for the set-valued semiprocess $S_{\mathcal{V}}$} iff $\Theta_{\mathcal{V}}$ is compact and the following conditions are satisfied:
\begin{itemize}
\item[(i)] $\Theta_{\mathcal{V}}$ is \emph{uniformly attractive for $S_{\mathcal{V}}$}, that is, for every bounded subset $B \subset X$ one has
\begin{align} \label{eq:def-uniformly-attractive}
\dist\big( S_{\mathcal{V}}(t,0,B), \Theta_{\mathcal{V}} \big) \longrightarrow 0 \qquad (t \to \infty)
\end{align}
\item[(ii)] $\Theta_{\mathcal{V}}$ is \emph{negatively invariant under $S_{\mathcal{V}}$}, that is, 
\begin{align} \label{eq:def-neg-invar}
\Theta_{\mathcal{V}} \subset S_{\mathcal{V}}(t,0,\Theta_{\mathcal{V}}) \qquad (t \in \R^+_0). 
\end{align}
\end{itemize} 

\textcolor{black}{See~\cite{KaVa09} (Definition~21), for instance.} Also see the remark at the very end of Section~\ref{sect:wAG+ISpS,def} for the relation of global attractors for set-valued semiprocesses and of global attractors for nonlinear semigroups~\cite{Robinson}, \cite{Temam}. 
It immediately follows from the definition above 
that a global attractor of a set-valued semiprocess $S_{\mathcal{V}}$ is contained in every closed uniformly $S_{\mathcal{V}}$-attractive set. 
And from this, in turn, it is clear that a set-valued semiprocess $S_{\mathcal{V}}$ can have at most one global attractor. 
\smallskip

We therefore turn to the question of existence of attractors now. In this context, the following elementary lemma is useful.

\begin{lm} \label{lm:char-omega-limit-sets}
If $(S_v)_{v\in \mathcal{V}}$ is \textcolor{black}{any} semiprocess family on $X$, then for \textcolor{black}{any subset} $M \subset X$ its $\omega$-limit set $\omega_{\mathcal{V}}(M) := \bigcap_{\tau \ge 0} \ol{ \bigcup_{t \ge \tau} S_{\mathcal{V}}(t,0,M) }$ under $S_{\mathcal{V}}$ can be characterized as
\begin{align*}
\omega_{\mathcal{V}}(M) = \Big\{ \xi \in X: \xi = \lim_{n\to\infty} S_{v_n}(t_n,0,x_n) \text{ where } t_n \longrightarrow \infty \text{ and } (x_n,v_n) \in M \times \mathcal{V} \Big\}.
%
\end{align*}
\end{lm}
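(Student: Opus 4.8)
The plan is to prove the set equality by a standard double inclusion, writing $\Omega$ for the right-hand side — the set of all sequential limits $\xi = \lim_{n\to\infty} S_{v_n}(t_n,0,x_n)$ with $t_n \to \infty$ and $(x_n,v_n) \in M \times \mathcal{V}$ — and exploiting only that $X$ is a metric space, so that closures coincide with sequential closures. No dynamical structure of the semiprocess family is needed; the lemma is really just a reformulation of the definition of $\omega_{\mathcal{V}}(M)$.

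For the inclusion $\Omega \subseteq \omega_{\mathcal{V}}(M)$, I would fix $\xi \in \Omega$, say $\xi = \lim_{n\to\infty} S_{v_n}(t_n,0,x_n)$, together with an arbitrary $\tau \ge 0$. Since $t_n \to \infty$, all but finitely many of the $t_n$ exceed $\tau$, and for those indices the point $S_{v_n}(t_n,0,x_n)$ lies in $\bigcup_{t \ge \tau} S_{\mathcal{V}}(t,0,M)$ by the definition~\eqref{eq:set-valued-semiproc,def} of the set-valued semiprocess. Passing to the limit, $\xi$ lies in the closure of that union; since $\tau$ was arbitrary, $\xi \in \bigcap_{\tau \ge 0} \ol{\bigcup_{t \ge \tau} S_{\mathcal{V}}(t,0,M)} = \omega_{\mathcal{V}}(M)$.

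For the reverse inclusion $\omega_{\mathcal{V}}(M) \subseteq \Omega$, I would fix $\xi \in \omega_{\mathcal{V}}(M)$, so that $\xi \in \ol{\bigcup_{t \ge n} S_{\mathcal{V}}(t,0,M)}$ for every $n \in \N$. Using that $X$ is metric, for each $n$ I can choose a point $y_n$ in the set $\bigcup_{t \ge n} S_{\mathcal{V}}(t,0,M)$ itself with $\norm{y_n - \xi} < 1/n$; unwinding the definitions of the union and of $S_{\mathcal{V}}$, this $y_n$ has the form $y_n = S_{v_n}(t_n,0,x_n)$ for some $t_n \ge n$ and $(x_n,v_n) \in M \times \mathcal{V}$. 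Then $t_n \ge n \to \infty$ and $y_n \to \xi$, whence $\xi \in \Omega$.

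The argument is essentially routine, and I expect no genuine obstacle. The one point meriting a little care is the second inclusion, where one must recall that a point in the closure of a set in a metric space is an honest sequential limit of points of that set — this is precisely what licenses the $1/n$-approximation and hence the extraction of the representing sequences $(t_n)$, $(x_n)$, $(v_n)$. In particular, neither the cocycle property~\eqref{eq:cocycle-prop} nor the invariance~\eqref{eq:V-invariant-under-T} of $\mathcal{V}$ under the translation $T$ enters the proof.
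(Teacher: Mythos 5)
Your proof is correct. The paper itself gives no argument here, deferring instead to Lemma~3.3 of~\cite{KaMeVa03}, and your double-inclusion argument --- using $t_n\to\infty$ to land in every tail union for the easy direction, and the metric-space fact that closure points are sequential limits to extract the representing sequences $(t_n,x_n,v_n)$ with $t_n\ge n$ for the converse --- is precisely the standard proof that citation encapsulates, including your accurate observation that neither the cocycle property nor the $T$-invariance of $\mathcal{V}$ is used.
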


\begin{proof}
See, for instance, Lemma~3.3 from~\cite{KaMeVa03}. 
\end{proof}

Additionally, 
the following notions for a set-valued semiprocess $S_{\mathcal{V}}$ corresponding to a semiprocess family $(S_v)_{v\in \mathcal{V}}$ on $X$ are useful. A subset $B_0 \subset X$ is called \emph{absorbing for $S_{\mathcal{V}}$} iff for every bounded set $B \subset X$ there exists a time $\tau_B$ such that
\begin{align*} 
S_{\mathcal{V}}(t,0,B) \subset B_0 \qquad (t \ge \tau_B).
\end{align*}
$S_{\mathcal{V}}$ is called \emph{asymptotically compact} iff for every sequence $(t_n,x_n,v_n)$ in $\R^+_0 \times X \times \mathcal{V}$ with
\begin{align*} 
t_n \longrightarrow \infty 
\qquad \text{and} \qquad
\sup_{n\in \N} \norm{x_n} < \infty,
\end{align*}
the sequence $\big(S_{v_n}(t_n,0,x_n)\big)$ has a convergent subsequence. See the remark after \textcolor{black}{the proof of} Lemma~\ref{lm:existence-attractor} below for a set-theoretic characterization of asymptotic compactness. 
With these notions at hand, we can now formulate a basic existence result for attractors of set-valued semiprocesses. 

\begin{lm} \label{lm:existence-attractor}
Suppose $(S_v)_{v\in \mathcal{V}}$ is a semiprocess family on $X$ such that the input space $\mathcal{V}$ is compact and first-countable and the following assumptions are satisfied:
\begin{itemize}
\item[(i)] $S_{\mathcal{V}}$ has a bounded absorbing set $B_0$ 
\item[(ii)] $S_{\mathcal{V}}$ is asymptotically compact
\item[(iii)] $X \times \mathcal{V} \ni (x,v) \mapsto S_v(t_0,0,x)$ is continuous for every $t_0 \in (0,\infty)$. 
\end{itemize}
Then $S_{\mathcal{V}}$ has a unique global attractor $\Theta_{\mathcal{V}}$, namely $\Theta_{\mathcal{V}} = \omega_{\mathcal{V}}(B_0) = \bigcap_{\tau \ge 0} \ol{ \bigcup_{t \ge \tau} S_{\mathcal{V}}(t,0,B_0) }$.
\end{lm}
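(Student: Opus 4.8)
The plan is to show that the candidate $\Theta_{\mathcal{V}} := \omega_{\mathcal{V}}(B_0)$ is a global attractor; since we already observed that a set-valued semiprocess has at most one global attractor, it suffices to verify that $\omega_{\mathcal{V}}(B_0)$ is compact, uniformly attractive, and negatively invariant. Throughout, the central tool is the sequential characterization of $\omega$-limit sets from Lemma~\ref{lm:char-omega-limit-sets}, which reduces each claim to extracting convergent subsequences of trajectory values $S_{v_n}(t_n,0,x_n)$ with $t_n \to \infty$. The hypotheses enter as follows: the bounded absorbing set and asymptotic compactness guarantee that such subsequences exist and that their limits lie in $\omega_{\mathcal{V}}(B_0)$, whereas the compactness and first-countability of $\mathcal{V}$ together with the continuity assumption~(iii) will be needed only for negative invariance.

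First I would establish nonemptiness and compactness. Nonemptiness is immediate: pick any $t_n \to \infty$, any $x_n \in B_0$ and any $v_n \in \mathcal{V}$; since $B_0$ is bounded, asymptotic compactness yields a convergent subsequence of $\big(S_{v_n}(t_n,0,x_n)\big)$, whose limit lies in $\omega_{\mathcal{V}}(B_0)$ by Lemma~\ref{lm:char-omega-limit-sets}. For compactness, in the metric space $X$ it suffices to prove sequential compactness: given $(\xi_k)$ in $\Theta_{\mathcal{V}}$, I would use the characterization to approximate each $\xi_k$ to within $1/k$ by a value $S_{v_k}(t_k,0,x_k)$ with $t_k \ge k$ and $x_k \in B_0$ (a diagonal choice), and then apply asymptotic compactness once more to extract a convergent subsequence. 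Its limit again lies in $\omega_{\mathcal{V}}(B_0)$ by Lemma~\ref{lm:char-omega-limit-sets}, so $\Theta_{\mathcal{V}}$ is sequentially compact and, in particular, closed.

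Next I would prove uniform attractivity by contradiction. If~\eqref{eq:def-uniformly-attractive} failed for some bounded $B$, there would be $\eps > 0$, times $t_n \to \infty$ and points $y_n \in S_{\mathcal{V}}(t_n,0,B)$ with $\dist(y_n,\Theta_{\mathcal{V}}) \ge \eps$. Using the absorbing time $\tau_B$, the semigroup-like inclusion~\eqref{eq:sgr-property-S_V}, and monotonicity of $S_{\mathcal{V}}(s,0,\cdot)$ in its set argument, one has for $t_n \ge \tau_B$ that
\begin{align*}
S_{\mathcal{V}}(t_n,0,B) \subset S_{\mathcal{V}}\big(t_n - \tau_B,\, 0,\, S_{\mathcal{V}}(\tau_B,0,B)\big) \subset S_{\mathcal{V}}(t_n - \tau_B,\, 0,\, B_0),
\end{align*}
so each $y_n$ is in fact of the form $S_{v_n'}(t_n - \tau_B, 0, x_n')$ with $x_n' \in B_0$ and $t_n - \tau_B \to \infty$. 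Asymptotic compactness then gives a subsequence $y_{n_k} \to \eta$, and Lemma~\ref{lm:char-omega-limit-sets} forces $\eta \in \Theta_{\mathcal{V}}$, so that $\dist(y_{n_k},\Theta_{\mathcal{V}}) \to \dist(\eta,\Theta_{\mathcal{V}}) = 0$, contradicting $\dist(y_{n_k},\Theta_{\mathcal{V}}) \ge \eps$.

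The step I expect to be the main obstacle is negative invariance~\eqref{eq:def-neg-invar}, since it is the only place where all three hypotheses interact. Fixing $t > 0$ (the case $t = 0$ being trivial) and $\xi \in \Theta_{\mathcal{V}}$, write $\xi = \lim_n S_{v_n}(t_n,0,x_n)$ with $t_n \to \infty$ and $x_n \in B_0$. The key is to peel off the final time-window of length $t$ using the semiprocess property together with the translation identity~(ii): for $t_n \ge t$ one obtains
\begin{align*}
S_{v_n}(t_n,0,x_n) = S_{T(t_n - t)v_n}\big(t,\, 0,\, S_{v_n}(t_n - t, 0, x_n)\big).
\end{align*}
Now $z_n := S_{v_n}(t_n - t,0,x_n)$ has a subsequence converging to some $\theta \in \Theta_{\mathcal{V}}$, by asymptotic compactness and Lemma~\ref{lm:char-omega-limit-sets}, while $w_n := T(t_n - t)v_n \in \mathcal{V}$ by~\eqref{eq:V-invariant-under-T} has a further subsequence converging to some $w \in \mathcal{V}$, because $\mathcal{V}$ is compact and first-countable, hence sequentially compact. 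Passing to a common subsequence and invoking the joint continuity~(iii) of $(x,v) \mapsto S_v(t,0,x)$ yields $\xi = S_w(t,0,\theta) \in S_{\mathcal{V}}(t,0,\Theta_{\mathcal{V}})$, which is precisely~\eqref{eq:def-neg-invar}. The delicate points to get right are the bookkeeping of the translated inputs $T(t_n - t)v_n$ and the justification that compactness plus first-countability of $\mathcal{V}$ really delivers sequential compactness.
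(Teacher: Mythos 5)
Your proposal is correct and follows essentially the same route as the paper's proof: the same sequential characterization of $\omega$-limit sets (Lemma~\ref{lm:char-omega-limit-sets}) combined with asymptotic compactness yields compactness and uniform attractivity of $\omega_{\mathcal{V}}(B_0)$, and negative invariance is obtained exactly as in the paper via the cocycle identity, sequential compactness of $\mathcal{V}$ (from compactness plus first-countability), and the joint continuity assumption~(iii). The only difference is organizational -- you merge the paper's separate step showing $\dist(S_{\mathcal{V}}(t,0,B_0),\omega_{\mathcal{V}}(B_0)) \to 0$ with the absorbing argument into a single contradiction using~\eqref{eq:sgr-property-S_V} -- which is a harmless repackaging of the same ingredients.
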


\begin{proof}
We can proceed similarly to the proof of Theorem~22 from~\cite{KaVa09}, but for the reader's convenience we give a self-contained 
proof here. 
As a first step, we show that $\omega_{\mathcal{V}}(B)$ is compact for every bounded subset $B \subset X$. 
So, let $B \subset X$ be bounded and let $(\xi_n)$ be an arbitrary sequence in $\omega_{\mathcal{V}}(B)$. In view of Lemma~\ref{lm:char-omega-limit-sets} there exists for every $n \in \N$ some $(t_n,x_n,v_n) \in \R^+_0 \times X \times \mathcal{V}$ with $t_n \ge n$ and $x_n \in B$ such that
\begin{align} \label{eq:existence-attractor-step-1}
\norm{S_{v_n}(t_n,0,x_n) - \xi_n} \le 1/n.
\end{align}
Since $S_{\mathcal{V}}$ is asymptotically compact by assumption~(ii), 
$(S_{v_n}(t_n,0,x_n))$ has a convergent subsequence $(S_{v_{n_k}}(t_{n_k},0,x_{n_k}))$ and, by Lemma~\ref{lm:char-omega-limit-sets}, its limit $\xi$ belongs to $\omega_{\mathcal{V}}(B)$. So, by virtue of~\eqref{eq:existence-attractor-step-1}, 
\begin{align}
\xi_{n_k} \longrightarrow \xi \in \omega_{\mathcal{V}}(B) \qquad (k \to \infty),
\end{align}
which proves the claimed compactness of $\omega_{\mathcal{V}}(B)$.
\smallskip

As a second step, we show that $\dist(S_{\mathcal{V}}(t,0,B),\omega_{\mathcal{V}}(B)) \longrightarrow 0$ as $t \to \infty$ for every bounded subset $B \subset X$. 
Assuming the contrary, we find a bounded subset $B \subset X$ and an $\eps > 0$ such that for every $n \in \N$ there exists a $t_n \ge n$ with
\begin{align*}
S_{\mathcal{V}}(t_n,0,B) \not\subset B_{\eps}( \omega_{\mathcal{V}}(B) ),
\end{align*}
that is, there exist $x_n \in B$ and $v_n \in \mathcal{V}$ such that
\begin{align} \label{eq:existence-attractor-step-2,1}
S_{v_n}(t_n,0,x_n) \notin  B_{\eps}( \omega_{\mathcal{V}}(B) ) \qquad (n \in \N). 
\end{align}
Since $S_{\mathcal{V}}$ is asymptotically compact by assumption~(ii), 
$(S_{v_n}(t_n,0,x_n))$ has a convergent subsequence $(S_{v_{n_k}}(t_{n_k},0,x_{n_k}))$ and, by Lemma~\ref{lm:char-omega-limit-sets}, its limit belongs to $\omega_{\mathcal{V}}(B)$, that is,
\begin{align} \label{eq:existence-attractor-step-2,2}
\lim_{k\to\infty} S_{v_{n_k}}(t_{n_k},0,x_{n_k}) \in \omega_{\mathcal{V}}(B).
\end{align}
Contradiction to~\eqref{eq:existence-attractor-step-2,1}! So, our 
assumption was false and the second step is proved.
\smallskip

As a third step, we show that $\Theta_{\mathcal{V}} := \omega_{\mathcal{V}}(B_0)$ is compact and uniformly attractive for $S_{\mathcal{V}}$. 
In view of the first step, the compactness of $\Theta_{\mathcal{V}}$ is clear and we only have to prove the uniform attractivity of $\Theta_{\mathcal{V}}$. So, let $B \subset X$ be an arbitrary bounded subset. Since $B_0$ is absorbing for $S_{\mathcal{V}}$ by assumption~(i), there exists a time $\tau_B$ such that
\begin{align}
S_{\mathcal{V}}(t,0,B) \subset B_0 \qquad (t \ge \tau_B).
\end{align}
Also, by virtue of the second step, 
for every $\eps > 0$ there exists a time $\tau_{B_0,\eps}$ such that
\begin{align}
S_{\mathcal{V}}(t,0,B_0) \subset B_{\eps}(\Theta_{\mathcal{V}}) \qquad (t \ge \tau_{B_0,\eps}).
\end{align}  
So, for all $t \ge \tau_B + \tau_{B_0,\eps}$, we conclude with~\eqref{eq:sgr-property-S_V} that 
\begin{align}
S_{\mathcal{V}}(t,0,B) \subset S_{T(\tau_B)\mathcal{V}}\big(t-\tau_B,0,S_{\mathcal{V}}(\tau_B,0,B)\big) \subset S_{\mathcal{V}}(t-\tau_B,0,B_0) \subset B_{\eps}(\Theta_{\mathcal{V}}).
\end{align}
And therefore, $\Theta_{\mathcal{V}}$ is uniformly attractive for $S_{\mathcal{V}}$, as desired.
\smallskip

As a fourth and last step, we show that $\Theta_{\mathcal{V}}$ is negatively invariant under $S_{\mathcal{V}}$. 
So, let $\xi \in \Theta_{\mathcal{V}} = \omega_{\mathcal{V}}(B_0)$ and $t \in (0,\infty)$. In view of Lemma~\ref{lm:char-omega-limit-sets}, $\xi = \lim_{n\to\infty} S_{v_n}(t_n,0,x_n)$ for certain $(t_n,x_n,v_n) \in \R^+_0\times X \times \mathcal{V}$ with $t_n \longrightarrow \infty$ and $x_n \in B_0$. Also, by the cocycle property, 
\begin{align} \label{eq:existence-attractor-step-4,1}
S_{v_n}(t_n,0,x_n) = S_{T(t_n-t)v_n}\big(t,0,S_{v_n}(t_n-t,0,x_n)\big)
\end{align}
for all $n \in \N$ so large that $t_n \ge t$. Since $\mathcal{V}$ is compact and first-countable and hence sequentially compact and since $S_{\mathcal{V}}$ is asymptotically compact by assumption~(i), 
there exist subsequences such that
\begin{align} \label{eq:existence-attractor-step-4,2}
T(t_{n_k}-t)v_{n_k} \longrightarrow v \qquad \text{and} \qquad S_{v_{n_k}}(t_{n_k}-t,0,x_{n_k}) \longrightarrow x
\end{align}
for some $v \in \mathcal{V}$ and some $x \in X$, which actually belongs to $\omega_{\mathcal{V}}(B_0) = \Theta_{\mathcal{V}}$ by virtue of Lemma~\ref{lm:char-omega-limit-sets}. Combining now~\eqref{eq:existence-attractor-step-4,1} and~\eqref{eq:existence-attractor-step-4,2} with the continuity assumption~(iii), we obtain
\begin{align*}
\xi = \lim_{k\to\infty} S_{T(t_{n_k}-t)v_{n_k}}\big(t,0,S_{v_{n_k}}(t_{n_k}-t,0,x_{n_k}) \big) = S_v(t,0,x) \in S_{\mathcal{V}}(t,0,\Theta_{\mathcal{V}}),
\end{align*}
which proves the claimed negative invariance of $\Theta_{\mathcal{V}}$ under $S_{\mathcal{V}}$. 
\end{proof}

We note in passing that 
-- just like in the case of (single-valued) semigroups (Remark~1.5 in~\cite{Temam}) -- the asymptotic compactness of a set-valued semiprocess $S_{\mathcal{V}}$ is equivalent to the asymptotic compactness of $S_{\mathcal{V}}$ in the following set-theoretic sense: for every bounded subset $B \subset X$ there exists a compact set $K_B \subset X$ such that 
\begin{align*}
\dist(S_{\mathcal{V}}(t,0,B),K_B) \longrightarrow 0 \qquad (t \to \infty).
\end{align*}
Indeed, the implication from asymptotic compactness to set-theoretic 
asymptotic compactness follows by the first two steps of the above proof, and the reverse implication is easy to see.

\subsection{Asymptotic gains and input-to-state practical stability}
\label{sect:wAG+ISpS,def}

We finally come to the central concept of this paper, namely the (weak) asymptotic gain property. In the following, our input space $\mathcal{U}$ will always be
\begin{align} \label{eq:input-spaces}
\mathcal{U} = \mathcal{U}_{1r_0} := S^{\infty}(\R^+_0,U) \cap \ol{B}_{r_0}^{L^{\infty}}(0)
\quad \text{or} \quad
\mathcal{U} = \mathcal{U}_{2r_0} := L^{\infty}(\R^+_0,U) \cap \ol{B}_{r_0}^{L^{\infty}}(0),
\end{align}
where $r_0 \le \infty$ and where
\begin{align} \label{eq:def-S-infty}
S^{\infty}(\R^+_0,U) := \big\{ u \in L^{\infty}(\R^+_0,U): \, &\text{there exist } u_n \in S_c(\R^+_0,U) \text{ such that } \notag \\
&u_n \longrightarrow u \text{ in } L^{\infty}_{\mathrm{loc}}(\R^+_0,U) \big\}. 
\end{align}
In the above relation, $S_c(\R^+_0,U)$ denotes the set of step functions from $\R^+_0$ to $U$ with compact support, that is, the functions $u: \R^+_0 \to U$ for which there exist finitely many points $0=t_0<t_1< \dotsb < t_m < \infty$ such that $u|_{(t_{i-1},t_i)}$ is constant for all $i \in \{1,\dots,m\}$ and $u|_{(t_m,\infty)} = 0$. It is well-known that $S^{\infty}(\R^+_0,U)$ is a strict subset of $L^{\infty}(\R^+_0,U)$. See the remarks after Proposition~3.4.4 of~\cite{Cohn}, for instance. 

\begin{lm} \label{lm:input-spaces}
If $\mathcal{U}$ is as in~\eqref{eq:input-spaces} with $r_0 \le \infty$ and $T$ is as in~\eqref{eq:transl-sgr}, then $T(h)\mathcal{U} \subset \mathcal{U}$ for every $h \in \R^+_0$. Additionally, $S^{\infty}(\R^+_0,U)$ comprises the bounded piecewise continuous functions from $\R^+_0$ to $U$. 
\end{lm}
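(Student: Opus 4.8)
The plan is to treat the two assertions separately, first the translation invariance $T(h)\mathcal{U}\subset\mathcal{U}$ and then the inclusion of bounded piecewise continuous functions in $S^\infty(\R^+_0,U)$. I would begin by disposing of the norm-ball constraint, which is common to both choices of $\mathcal{U}$ in~\eqref{eq:input-spaces} and to both finite and infinite $r_0$. For any $v\in L^\infty(\R^+_0,U)$ and $h\geq 0$,
\begin{align*}
\norm{T(h)v}_\infty = \esssup_{s\geq 0}|v(s+h)| = \esssup_{\sigma\geq h}|v(\sigma)| \leq \norm{v}_\infty \leq r_0,
\end{align*}
so left-translation is non-expansive on $L^\infty$ and maps $\ol{B}_{r_0}^{L^\infty}(0)$ into itself. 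Since $L^\infty(\R^+_0,U)$ is obviously translation invariant, this already settles the case $\mathcal{U}=\mathcal{U}_{2r_0}$.

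For $\mathcal{U}=\mathcal{U}_{1r_0}$ it remains to prove $T(h)S^\infty(\R^+_0,U)\subset S^\infty(\R^+_0,U)$, which I would do in two steps. First, $T(h)$ maps $S_c(\R^+_0,U)$ into itself: if $w\in S_c$ has breakpoints $0=t_0<\dots<t_m$ and vanishes on $(t_m,\infty)$, then $T(h)w=w(\cdot+h)$ is again piecewise constant, with breakpoints among $\{0\}\cup\{t_i-h:\,t_i>h\}$, and it vanishes on $(\max\{0,t_m-h\},\infty)$, hence $T(h)w\in S_c$. Second, translation is continuous for $L^\infty_{\mathrm{loc}}$-convergence: given $u\in S^\infty$ with approximating step functions $u_n\to u$ in $L^\infty_{\mathrm{loc}}$, the non-expansiveness applied on finite intervals yields, for every $t_0>0$,
\begin{align*}
\norm{T(h)u_n-T(h)u}_{[0,t_0],\infty} = \esssup_{\sigma\in[h,t_0+h]}|u_n(\sigma)-u(\sigma)| \leq \norm{u_n-u}_{[0,t_0+h],\infty}\longrightarrow 0.
\end{align*}
Since each $T(h)u_n\in S_c$ by the first step, this shows $T(h)u\in S^\infty$ and finishes the first assertion.

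For the second assertion I read \emph{comprises} as inclusion and show that every bounded piecewise continuous $u:\R^+_0\to U$ lies in $S^\infty$. The idea is to approximate $u$ uniformly on each initial interval $[0,n]$ by a step function and then truncate to zero. On $[0,n]$ the function $u$ is bounded, has only finitely many discontinuities, and possesses one-sided limits at each of them, so $u|_{[0,n]}$ is regulated and therefore a uniform limit of step functions; I would pick a step function $w_n$ on $[0,n]$ with $\norm{u-w_n}_{[0,n],\infty}\leq 1/n$ and set $u_n:=w_n$ on $[0,n]$ and $u_n:=0$ on $(n,\infty)$, so that $u_n\in S_c(\R^+_0,U)$. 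For fixed $t_0>0$ and all $n\geq t_0$,
\begin{align*}
\norm{u_n-u}_{[0,t_0],\infty}\leq \norm{w_n-u}_{[0,n],\infty}\leq 1/n\longrightarrow 0,
\end{align*}
hence $u_n\to u$ in $L^\infty_{\mathrm{loc}}$; as $u$ is bounded we have $u\in L^\infty(\R^+_0,U)$, and therefore $u\in S^\infty$.

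The only genuinely nontrivial ingredient is the uniform approximability of $u|_{[0,n]}$ by step functions, i.e.\ that a regulated function on a compact interval is a uniform limit of step functions; I expect this to be the main point to justify carefully, using that on each of the finitely many maximal subintervals of continuity $u$ extends to a uniformly continuous function on the closure via its one-sided limits. Everything else reduces to the non-expansiveness of left-translation on $L^\infty$ together with bookkeeping of breakpoints, the small interval-shift in the $L^\infty_{\mathrm{loc}}$ estimates being the only technical care required. I would also note that \emph{comprises} must be understood as inclusion rather than equality, since $S^\infty$ additionally contains regulated functions with infinitely many jumps accumulating in a compact interval, which are not piecewise continuous.
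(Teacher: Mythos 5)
Your proposal is correct and follows essentially the same route as the paper: for the inclusion $PC \subset S^{\infty}(\R^+_0,U)$ you use exactly the paper's argument (approximate $u|_{[0,n]}$ uniformly by a step function within $1/n$, extend by zero, and observe convergence in $L^{\infty}_{\mathrm{loc}}$), with the density of step functions justified via the regulated-function property that the paper dismisses as ``easily seen.'' The translation-invariance part, which the paper declares clear and does not prove, you verify in full by routine and correct computations (non-expansiveness of $T(h)$ on $L^{\infty}$, invariance of $S_c$, and continuity of translation in $L^{\infty}_{\mathrm{loc}}$), so there is nothing to fix.
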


\begin{proof}
We only prove the addtional statement because the invariance statement is clear. So, let $u:\R^+_0 \to U$ be bounded and piecewise continuous. 
Since for every $t_0 \in (0,\infty)$ the step functions on $[0,t_0]$ are easily seen to be dense in $PC([0,t_0],U)$ w.r.t.~$\norm{\cdot}_{\infty}$, there exists for every $n \in \N$ a step function $u_{|n}: [0,n] \to U$ such that $$\norm{u_{|n}-u}_{[0,n],\infty} \le 1/n.$$ 
Setting now $u_n(t) := u_{|n}(t)$ for $t \in [0,n]$ and $u_n(t) := 0$ for $t \in (n,\infty)$, we see that $u_n \in S_c(\R^+_0,U)$ for every $n \in \N$ and that for every $t_0 \in (0,\infty)$
\begin{align*}
\norm{u_n-u}_{[0,t_0],\infty} \le \norm{u_n-u}_{[0,n],\infty} = \norm{u_{|n}-u}_{[0,n],\infty} \le 1/n \qquad (n \ge t_0).
\end{align*}
Consequently, $u_n \longrightarrow u$ in $L^{\infty}_{\mathrm{loc}}(\R^+_0,U)$ and therefore $u \in S^{\infty}(\R^+_0,U)$, as desired. 
\end{proof}

Suppose $(S_u)_{u\in\mathcal{U}}$ is a semiprocess family on $X$ with input space $\mathcal{U}$ as in~\eqref{eq:input-spaces} and translation $T$ as in~\eqref{eq:transl-sgr}. Suppose further that the undisturbed (set-valued) semiprocess $S_{\{0\}}$ has a global attractor $\Theta \subset X$. 
Then $(S_u)_{u \in \mathcal{U}}$ is said to be of \emph{asymptotic gain w.r.t.~$\Theta$} iff there exists a comparison function $\gamma \in \mathcal{K}$ such that
\begin{align} \label{eq:def-wAG}
\limsup_{t\to \infty} \norm{x(t,x_0,u)}_{\Theta} \le \gamma(\norm{u}_{\infty})
\end{align}
for every $(x_0,u) \in X \times \mathcal{U}$. 
Also, $(S_u)_{u\in\mathcal{U}}$ is called \emph{input-to-state practically stable w.r.t.~$\Theta$} iff there exist comparison functions $\beta \in \mathcal{KL}$, $\gamma \in \mathcal{K}$ and a constant $c \in \R^+_0$ such that
\begin{align} \label{eq:def-ISpS}
\norm{x(t,x_0,u)}_{\Theta} \le \beta(\norm{x_0}_{\Theta},t) + \gamma(\norm{u}_{\infty}) + c \qquad (t \in \R^+_0)
\end{align}
for every $(x_0,u) \in X \times \mathcal{U}$. In case one can choose $c =0$ in~\eqref{eq:def-ISpS}, then $(S_u)_{u\in\mathcal{U}}$ is called just \emph{input-to-state stable w.r.t.~$\Theta$}. In~\eqref{eq:def-wAG} and~\eqref{eq:def-ISpS} we used the alternative  notation from~\eqref{eq:alternative-notation-for-trajectories}, of course.
\smallskip

%
In the special case where $\Theta = \{0\}$ in the definition above, the relations \eqref{eq:def-uniformly-attractive} and~\eqref{eq:def-neg-invar} imply that $0$ is an attractive equilibrium point of the undisturbed system $S_0$. (Indeed, by~\eqref{eq:def-uniformly-attractive} we have $\norm{S_0(t,0,x_0)} = \dist(S_{\{0\}}(t,0,\{x_0\}), \Theta) \longrightarrow 0$ as $t \to \infty$ for every $x_0 \in X$ and by~\eqref{eq:def-neg-invar} we have $$\{0\} = \Theta \subset S_{\{0\}}(t,0,\Theta) = \{S_0(t,0,0)\}$$ and thus $0 = S_0(t,0,0)$ for all $t \in \R_0^+$.) So, in the special case $\Theta = \{0\}$, the asymptotic gain property as defined above 
reduces to the (weak) asymptotic gain property from~\cite{MiWi16a},   
\cite{Sc18-wISS}. (In \cite{Sc18-wISS} the additional qualifier is used in order to more clearly reflect, 
already in the terminology, the logical relations to the uniform asymptotic gain property and 
the notions of weak, strong, and uniform input-to-state stability.)
In~\cite{Mi18-ISpS} 
the uniform variant of the above (weak) asymptotic gain property is studied 
and abstract theoretical characterizations of input-to-state practical stability are given (Theorem~III.1 and Proposition~IV.7).
\smallskip

We close this section with a remark on how the global attractors 
of set-valued semiprocesses $S_{\{0\}}$, as defined and used 
in this paper, are related to the global attractors of nonlinear semigroups $S_0$ as defined in~\cite{Robinson}, \cite{Temam}, for instance. Apart from the negative invariance condition $\Theta \subset S_0(t,\Theta)$, the global attractors of nonlinear semigroups $S_0$ in the sense of~\cite{Robinson}, \cite{Temam} are also required to satisfy the positive invariance condition 
\begin{align}
S_0(t,\Theta) \subset \Theta \qquad (t \in \R^+_0).
\end{align}
%
So, at first glance, it is not clear whether a global attractor of the set-valued semiprocess $S_{\{0\}}$ is also a global attractor of the corresponding nonlinear semigroup $S_0$. At second glance, however, this turns out to be true. Indeed, let $(S_v)_{v \in \mathcal{V}}$ be a semiprocess family with $\mathcal{V} := \{0\}$ and translation~\eqref{eq:transl-sgr}, let $\Theta$ be a global attractor of the set-valued semiprocess $S_{\{0\}}$, and let $t \in \R^+_0$ and $\eps > 0$. It then follows by~\eqref{eq:def-neg-invar} and~\eqref{eq:cocycle-prop} that
\begin{align} \label{eq:semiproc-attractor-is-semigr-attractor-1}
S_0(t,0,\Theta) = S_{\{0\}}(t,0,\Theta) \subset S_{\{0\}}\big( t,0, S_{\{0\}}(s,0,\Theta) \big) = S_{\{0\}}(t+s,0,\Theta)
\end{align}
for every $s$, and by~\eqref{eq:def-uniformly-attractive} it follows that
\begin{align} \label{eq:semiproc-attractor-is-semigr-attractor-2}
S_{\{0\}}(t+s,0,\Theta) \subset B_{\eps}(\Theta)
\end{align}
for every large enough $s$. Since $\eps > 0$ was arbitrary and $\Theta$ is closed, we conclude from~\eqref{eq:semiproc-attractor-is-semigr-attractor-1} and~\eqref{eq:semiproc-attractor-is-semigr-attractor-2} that $\Theta$ is actually also positively invariant 
and, hence, a global attractor also of the nonlinear semigroup $S_0$ in the sense of~\cite{Robinson}, \cite{Temam}, as desired.

\section{Abstract asymptotic gain and input-to-state practical stability  results}

In this section, we establish our general abstract asymptotic gain result for semiprocess families $(S_u)_{u\in\mathcal{U}}$ with input space $\mathcal{U}$ as in~\eqref{eq:input-spaces} and with translation $T$ as in~\eqref{eq:transl-sgr}. 
In order to do so, we will embed the semiprocess $S_u$ for each individual $u \in \mathcal{U}$ into a family $(S_v)_{v\in\mathcal{V}(u)}$ of semiprocesses with a suitably chosen input space $\mathcal{V}(u)$ which contains $u$. We will 
choose, for given $u \in L^{\infty}(\R^+_0,U)$, 
\begin{align} \label{eq:V(u)-def}
\mathcal{V}(u) := \ol{ \{u(\cdot+h): h \in \R^+_0 \} } \subset L^2_{\mathrm{loc}}(\R^+_0,U),
\end{align}
where the closure is w.r.t.~the weak topology $\ul{\mathcal{T}}$ of the locally convex space $L^2_{\mathrm{loc}}(\R^+_0,U)$ with its standard locally convex topology $\ol{\mathcal{T}}$.
Clearly, 
\begin{align}
u \in \mathcal{V}(u) \qquad \text{and} \qquad \mathcal{V}(0) = \{0\}.
\end{align}
\textcolor{black}{
We will then show that the set-valued semiprocess $S_{\mathcal{V}(u)}$ has a unique global attractor $\Theta_{\mathcal{V}(u)}$ for every disturbance $u \in \mathcal{U}$ and that $\Theta_{\mathcal{V}(u)}$ depends upper semicontinuously on $u$. And from this, in turn, we will be able to  conclude our asymptotic gain result in a relatively easy way. 
}
\smallskip

\textcolor{black}{
At first glance, it might seem strange to embed the semiprocesses $S_u$, as described above, into the set-valued semiprocesses $S_{\mathcal{V}(u)}$ -- instead of just working with the individual semiprocesses $S_u$ themselves. After all, the asymptotic gain property~\eqref{eq:def-wAG} is defined solely in terms of the individual solution curves $S_u$. It might therefore seem natural to work with suitably defined attractors $\Theta_u$ of the disturbed semiprocesses $S_u$ and show the upper semicontinuous dependence of these attractors on $u$. We cannot argue like that, however, because there is no attractor concept that guarantees the negative invariance $\Theta_u \subset S_u(t,0,\Theta_u)$ under the disturbed semiprocesses $S_u$. And such an invariance relation is essential  for the desired upper semicontinuity, see~\eqref{eq:upper-semicont-1} below. In essence, this failure of negative invariance 
is due to the fact that the disturbed semiprocesses $S_u$ for $u \ne 0$ only satisfy the cocycle property
\begin{align*}
S_u(t+h,0,x) = S_{u(\cdot+h)}\big(\cdot,0,S_u(h,0,x)\big)
\end{align*}
but not the semigroup property~\cite{Robinson}, \cite{Temam}, which instead of the translate $u(\cdot+h)$ would feature only $u$ itself.
Collecting all the translates for a given $u$ as we do in~\eqref{eq:V(u)-def} and working with the corresponding set-valued semiprocess $S_{\mathcal{V}(u)}$, however, we can circumvent the difficulties described above. In particular, we get the semigroup-like property~\eqref{eq:sgr-property-S_V} and the negative invariance~\eqref{eq:def-neg-invar}.
}
\smallskip

It should be noticed that a net $(v_i)_{i\in I}$ is weakly convergent to $v$ in $L^2_{\mathrm{loc}}(\R^+_0,U)$ if and only if $v_i|_{[0,t_0]} \longrightarrow  v|_{[0,t_0]}$ weakly in $L^2([0,t_0],U)$ for every $t_0 \in (0,\infty)$. (In order to see this, notice that by standard locally convex theory (Theorem~IV.3.1 of~\cite{Conway}, for instance) a linear functional $\ell: L^2_{\mathrm{loc}}(\R^+_0,U) \to \mathbb{R}$ is $\ol{\mathcal{T}}$-continuous iff there exist $t_0 \in (0,\infty)$ and $M \in \R^+_0$ such that 
\begin{align*}
|\ell(v)| \le M \norm{v}_{[0,t_0],2} \qquad (v \in L^2_{\mathrm{loc}}(\R^+_0,U)).
\end{align*}
And this, in turn, is equivalent to $\ell_{t_0} \in (L^2([0,t_0],U))^*$, 
where $\ell_{t_0}$ is defined by $\ell_{t_0}(v_|) := \ell(v_| \,\&_{t_0} \, 0)$ for all $v_{|} \in L^2([0,t_0],U)$ and $v_| \,\&_{t_0} \, 0$ denotes the zero-extension of $v_|$ to the whole of $\R^+_0$.) 
%

\begin{lm} \label{lm:V(u)}
Suppose $U$ is a separable reflexive space, let $u \in L^{\infty}(\R^+_0,U)$ and let $\mathcal{V}(u)$ and $T$ be as in~\eqref{eq:V(u)-def} and~\eqref{eq:transl-sgr} respectively. 
Then $\mathcal{V}(u)$ 
is a metrizable and, in particular, first-countable, 
compact topological space and $T(h) \mathcal{V}(u) \subset \mathcal{V}(u)$ for all $h \in \R^+_0$. Additionally, for all $s \le t$, one has
\begin{align} \label{eq:estimate-for-transl-bounded-norm}
\int_s^t \norm{v(s)}_U^2 \d s \le (t-s) \norm{u}_{\infty}^2 \qquad (v \in \mathcal{V}(u)). 
\end{align}  
\end{lm}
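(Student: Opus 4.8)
The plan is to realize $\mathcal{V}(u)$ as a weakly closed subset of an explicitly built compact metrizable set and then to read off all the assertions from that description. The guiding observation is that every translate satisfies $\norm{u(\cdot+h)}_{\infty} \le \norm{u}_{\infty}$, so all translates lie in the ``cylinder''
\[
C := \big\{ v \in L^2_{\mathrm{loc}}(\R^+_0,U) : \esssup_{\sigma \ge 0} \norm{v(\sigma)}_U \le \norm{u}_{\infty} \big\}.
\]

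First I would show that $(C,\ul{\mathcal{T}})$ is compact and metrizable. For each $t_0$ the set $K_{t_0} := \{ w \in L^2([0,t_0],U) : \norm{w}_{\infty} \le \norm{u}_{\infty}\}$ is convex, bounded, and norm-closed in $L^2([0,t_0],U)$ (norm-closedness by passing to an a.e.-convergent subsequence), hence weakly closed by Mazur's theorem. Since $U$ is separable and reflexive, so is $L^2([0,t_0],U)$; thus $K_{t_0}$ is weakly compact (reflexivity) and carries a metrizable weak topology (a separable reflexive space has separable dual, so the weak topology is metrizable on bounded sets). I would then form the countable product $P := \prod_{n\in\N} (K_n,\mathrm{weak})$, which is compact and metrizable by Tychonoff, and pass to its subset $P_0 := \{(w_n)_n : \rho_n w_{n+1} = w_n \text{ for all } n\}$ of consistent families, where $\rho_n$ is the weakly continuous restriction $L^2([0,n+1],U) \to L^2([0,n],U)$; as the common zero set of the weakly continuous maps $(w_m)_m \mapsto \rho_n w_{n+1} - w_n$, the set $P_0$ is closed in $P$, hence compact metrizable. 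Finally, $v \mapsto (v|_{[0,n]})_n$ is a bijection $C \to P_0$ which, by the net-characterization of weak convergence recalled just before the lemma, is a homeomorphism; therefore $(C,\ul{\mathcal{T}})$ is compact and metrizable.

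With $C$ available, compactness and metrizability of $\mathcal{V}(u)$ are immediate: the translates lie in $C$, and $C$ is weakly closed (being weakly compact in the Hausdorff space $(L^2_{\mathrm{loc}},\ul{\mathcal{T}})$), so $\mathcal{V}(u)$ -- the weak closure of the translates -- is a closed subset of the compact metrizable set $C$, hence itself compact and metrizable (and in particular first-countable). For the translation invariance $T(h)\mathcal{V}(u) \subset \mathcal{V}(u)$, I would note that $T(h)$ is a continuous linear operator on $(L^2_{\mathrm{loc}},\ol{\mathcal{T}})$ -- indeed $\norm{T(h)v}_{[0,t_0],2} \le \norm{v}_{[0,t_0+h],2}$ -- hence weakly continuous; since $T(h)$ maps the generating set $\{u(\cdot+h') : h' \ge 0\}$ into itself, it maps its weak closure $\mathcal{V}(u)$ into $\mathcal{V}(u)$. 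Lastly, the estimate~\eqref{eq:estimate-for-transl-bounded-norm} follows at once from $\mathcal{V}(u) \subset C$: the bound $\norm{v}_{\infty} \le \norm{u}_{\infty}$ gives $\int_s^t \norm{v(\sigma)}_U^2 \d\sigma \le (t-s)\norm{u}_{\infty}^2$.

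The main obstacle is the topological bookkeeping in the first step: verifying that the weak topology on the bounded set $C$ genuinely coincides with the product topology transported from $P_0$, and combining the two standard but distinct facts -- weak compactness (from reflexivity) and weak metrizability on bounded sets (from separability of the dual) -- into one compact metrizable model. The Mazur argument for weak-closedness of the $L^{\infty}$-ball inside $L^2$ is the one place where convexity is essential and should not be glossed over.
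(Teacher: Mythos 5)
Your proof is correct, but it takes a genuinely different route from the paper's. The paper disposes of compactness and metrizability in one line by observing that $u \in L^{\infty}(\R^+_0,U)$ is translation-bounded in $L^2_{\mathrm{loc}}(\R^+_0,U)$ and then citing Proposition~VII.3.3 of Chepyzhov--Vishik for the hull of a translation-bounded function; it then uses metrizability to represent each $v \in \mathcal{V}(u)$ as the \emph{sequential} weak limit of translates $u(\cdot+h_n)$ and obtains~\eqref{eq:estimate-for-transl-bounded-norm} from weak lower semicontinuity of the norm, $\norm{v}_{[s,t],2} \le \liminf_n \norm{u(\cdot+h_n)}_{[s,t],2} \le (t-s)^{1/2}\norm{u}_{\infty}$, with translation invariance read off from the same sequences. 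You instead build a self-contained compact metrizable model: the cylinder $C$ of functions with $\esssup_{\sigma}\norm{v(\sigma)}_U \le \norm{u}_{\infty}$, realized as an inverse limit of the weakly compact, weakly metrizable sets $K_n$ (where Mazur's theorem is indeed the essential ingredient for weak closedness of the $L^{\infty}$-ball in $L^2$, and your use of reflexivity plus separability of $U^*$ -- which follows from separability of $U^{**}=U$ -- is correct). Your route buys two things the paper's does not: it avoids the external citation entirely, and it yields the strictly stronger conclusion $\mathcal{V}(u) \subset C$, i.e.\ the a.e.\ pointwise bound $\norm{v(\sigma)}_U \le \norm{u}_{\infty}$ for every $v \in \mathcal{V}(u)$, from which~\eqref{eq:estimate-for-transl-bounded-norm} is immediate; it also sidesteps the nets-versus-sequences issue, since you never need to know that weak closure points are sequential limits. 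What the paper's route buys in exchange is brevity and greater generality: the cited hull result applies to merely translation-bounded $u$, for which your $L^{\infty}$-cylinder argument would not be available. One small direction to watch in your inverse-limit step: compactness of $C$ should be deduced by showing the continuous bijection $P_0 \to C$ from the compact set $P_0$ into the Hausdorff space $(L^2_{\mathrm{loc}},\ul{\mathcal{T}})$ is a homeomorphism -- not the other way around, since compactness of $C$ is not yet known at that point; your phrasing leaves this direction implicit, but the argument goes through.
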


\begin{proof}
Since $u$ is translation-bounded in $L^2_{\mathrm{loc}}(\R^+_0,U)$ (Section~VII.3 of~\cite{ChVi}) by virtue of $$\sup_{t\in\R^+_0} \int_t^{t+1}\norm{u(s)}_U^2 \d s \le \norm{u}_{\infty}^2 < \infty,$$ the compactness and metrizability of $\mathcal{V}(u)$ follow from Proposition~VII.3.3 and the remark after Definition~VII.3.1 of~\cite{ChVi}. In particular, every $v \in \mathcal{V}(u)$ is the weak limit of a sequence (instead of only a net) of the form $(v_n) = (u(\cdot+h_n))$, that is, $v_n|_{[s,t]} \longrightarrow v|_{[s,t]}$ weakly in $L^2([s,t],U)$ for all $s \le t$. And from this, in turn, it follows that 
\begin{align*}
\norm{v}_{[s,t],2} \le \liminf_{n\to\infty} \norm{v_n}_{[s,t],2} \le (t-s)^{1/2} \norm{u}_{\infty}
\end{align*}  
and that $T(h)v \in \mathcal{V}(u)$, as desired. 
\end{proof}

\begin{lm} \label{lm:upper-semicont}
Suppose that $\mathcal{U}$ is as in~\eqref{eq:input-spaces} with $r_0 \le \infty$, $\mathcal{V}(u)$ and $T$ are as in~\eqref{eq:V(u)-def} and~\eqref{eq:transl-sgr} respectively. Suppose further that $(S_v)_{v\in\mathcal{V}(u)}$ is a semiprocess family on $X$ 
for every $u \in \mathcal{U}$ such that the following assumptions are satisfied:
\begin{itemize}
\item[(i)] there exists a 
bounded subset $B_0 \subset X$ which is absorbing for $S_{\mathcal{V}(u)}$ for every $u \in \mathcal{U}$
\item[(ii)] $S_{\mathcal{V}(u)}$ is asymptotically compact for every $u \in \mathcal{U}$
\item[(iii)] $X \times \mathcal{V}(u) \ni (x,v) \mapsto S_v(t_0,0,x)$ is continuous for every $t_0 \in (0,\infty)$ and $u \in \mathcal{U}$
\item[(iv)] $\dist\big(S_{\mathcal{V}(u)}(t_0,0,B_0),S_{\mathcal{V}(0)}(t_0,0,B_0)\big) \longrightarrow 0$ as $u \to 0$ for every $t_0 \in (0,\infty)$.
\end{itemize}
Then $S_{\mathcal{V}(u)}$ has a unique global attractor $\Theta_{\mathcal{V}(u)}$ for every $u \in \mathcal{U}$ and $u \mapsto \Theta_{\mathcal{V}(u)}$ is upper semicontinuous at $0$, that is,
\begin{align} \label{eq:upper-semicont}
\dist\big(\Theta_{\mathcal{V}(u)},\Theta_{\mathcal{V}(0)}\big) \longrightarrow 0 \qquad (u \to 0). 
\end{align}
\end{lm}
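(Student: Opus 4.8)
The plan is to derive the existence and uniqueness of each $\Theta_{\mathcal{V}(u)}$ from the abstract existence result and then to establish the upper semicontinuity~\eqref{eq:upper-semicont} by a contradiction argument that plays off assumption~(iv) against the uniform attractivity of the limiting attractor $\Theta_{\mathcal{V}(0)}$. For the existence part I would simply apply Lemma~\ref{lm:existence-attractor} to the semiprocess family $(S_v)_{v\in\mathcal{V}(u)}$ for each fixed $u\in\mathcal{U}$: the input space $\mathcal{V}(u)$ is compact and first-countable by Lemma~\ref{lm:V(u)}, and the remaining hypotheses of Lemma~\ref{lm:existence-attractor} are exactly our assumptions~(i), (ii), (iii). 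This gives the unique global attractor $\Theta_{\mathcal{V}(u)}=\omega_{\mathcal{V}(u)}(B_0)$ for every $u\in\mathcal{U}$, in particular for $u=0$. Since $B_0$ is absorbing, one has $S_{\mathcal{V}(u)}(t,0,B_0)\subset B_0$ for all large $t$, whence $\Theta_{\mathcal{V}(u)}=\omega_{\mathcal{V}(u)}(B_0)\subset\ol{B_0}$, so that all the attractors lie in one fixed bounded set.

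The crucial preparatory step is the inclusion $\Theta_{\mathcal{V}(u)}\subset\ol{S_{\mathcal{V}(u)}(t_0,0,B_0)}$ for \emph{every} fixed $t_0>0$. Indeed, given $\theta\in\Theta_{\mathcal{V}(u)}$, Lemma~\ref{lm:char-omega-limit-sets} writes $\theta=\lim_{m\to\infty} S_{w_m}(\tau_m,0,y_m)$ with $\tau_m\to\infty$, $y_m\in B_0$ and $w_m\in\mathcal{V}(u)$; splitting off the last time interval via the cocycle property~\eqref{eq:cocycle-prop} gives $S_{w_m}(\tau_m,0,y_m)=S_{T(\tau_m-t_0)w_m}\big(t_0,0,S_{w_m}(\tau_m-t_0,0,y_m)\big)$, and for $m$ large enough the inner state $S_{w_m}(\tau_m-t_0,0,y_m)$ already lies in the absorbing set $B_0$, while $T(\tau_m-t_0)w_m\in\mathcal{V}(u)$, so that $S_{w_m}(\tau_m,0,y_m)\in S_{\mathcal{V}(u)}(t_0,0,B_0)$; passing to the limit yields $\theta\in\ol{S_{\mathcal{V}(u)}(t_0,0,B_0)}$. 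Note that here the required largeness of $m$ may depend on $u$, so no uniformity of the absorption time in $u$ is needed.

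For the upper semicontinuity I would argue by contradiction: if~\eqref{eq:upper-semicont} failed, there would be $\eps>0$ and a sequence $u_n\to 0$ together with points $\theta_n\in\Theta_{\mathcal{V}(u_n)}$ satisfying $\dist(\theta_n,\Theta_{\mathcal{V}(0)})\ge\eps/2$. First fix $t_0$ large enough that the uniform attractivity~\eqref{eq:def-uniformly-attractive} of $\Theta_{\mathcal{V}(0)}$ applied to the bounded set $B_0$ gives $\dist\big(S_{\mathcal{V}(0)}(t_0,0,B_0),\Theta_{\mathcal{V}(0)}\big)<\eps/4$. For this fixed $t_0$, the inclusion from the previous step, together with the fact that the Hausdorff semi-distance is unchanged under taking the closure of its first argument (the map $x\mapsto\dist(x,C)$ being continuous), yields $\dist\big(\theta_n,S_{\mathcal{V}(0)}(t_0,0,B_0)\big)\le\dist\big(S_{\mathcal{V}(u_n)}(t_0,0,B_0),S_{\mathcal{V}(0)}(t_0,0,B_0)\big)$, which tends to $0$ as $n\to\infty$ by assumption~(iv). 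Combining the two estimates through the triangle inequality $\dist(\theta_n,\Theta_{\mathcal{V}(0)})\le\dist\big(\theta_n,S_{\mathcal{V}(0)}(t_0,0,B_0)\big)+\dist\big(S_{\mathcal{V}(0)}(t_0,0,B_0),\Theta_{\mathcal{V}(0)}\big)$ shows $\dist(\theta_n,\Theta_{\mathcal{V}(0)})<\eps/2$ for all sufficiently large $n$, contradicting the choice of $\theta_n$.

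I expect the main obstacle to be the correct sequencing of the two limiting processes -- the time $t_0\to\infty$ needed for the undisturbed attraction and the parameter $u\to 0$ needed for assumption~(iv) -- while scrupulously avoiding any spurious demand that the absorption time be uniform in $u$. The inclusion $\Theta_{\mathcal{V}(u)}\subset\ol{S_{\mathcal{V}(u)}(t_0,0,B_0)}$ for arbitrary fixed $t_0$ is precisely what decouples the two limits: it allows one to freeze $t_0$ first, chosen solely for the undisturbed attraction, and only afterwards to send $u\to 0$ via assumption~(iv). A secondary technical point to handle with care is that the attractors are a priori contained only in $\ol{B_0}$ rather than in $B_0$; the argument above sidesteps this by working directly with $S_{\mathcal{V}(u)}(t_0,0,B_0)$ through the $\omega$-limit characterization, so that assumption~(iv) can be invoked verbatim for the set $B_0$ appearing in its statement.
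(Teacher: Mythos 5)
Your proposal is correct. The existence part is identical to the paper's (apply Lemma~\ref{lm:existence-attractor}, whose hypotheses are exactly (i)--(iii) plus the compactness and first-countability of $\mathcal{V}(u)$ from Lemma~\ref{lm:V(u)}), and the overall architecture of the semicontinuity argument is also the same: freeze a time $t_0$ chosen solely so that the undisturbed attraction gives $S_{\mathcal{V}(0)}(t_0,0,B_0)$ close to $\Theta_{\mathcal{V}(0)}$, and only then send $u\to 0$ via assumption~(iv). Where you genuinely diverge is in the key inclusion that feeds $\Theta_{\mathcal{V}(u)}$ into assumption~(iv). The paper gets $\Theta_{\mathcal{V}(u)} \subset S_{\mathcal{V}(u)}(t_0,0,\Theta_{\mathcal{V}(u)}) \subset S_{\mathcal{V}(u)}(t_0,0,B_0)$ by invoking the \emph{negative invariance} of $\Theta_{\mathcal{V}(u)}$ (combined with absorbingness to first get $\Theta_{\mathcal{V}(u)}\subset B_0$) -- a property the paper explicitly flags as the reason for working with set-valued semiprocesses in the first place. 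You instead bypass negative invariance entirely and re-derive the (slightly weaker) inclusion $\Theta_{\mathcal{V}(u)} \subset \ol{S_{\mathcal{V}(u)}(t_0,0,B_0)}$ directly from the representation $\Theta_{\mathcal{V}(u)}=\omega_{\mathcal{V}(u)}(B_0)$, the cocycle property~\eqref{eq:cocycle-prop}, the $T$-invariance of $\mathcal{V}(u)$, and absorbingness of $B_0$ applied to $B_0$ itself; the closure you incur is then harmlessly absorbed by the $1$-Lipschitz continuity of $x\mapsto\dist(x,C)$ in your semi-distance estimates. This buys a small amount of robustness -- your preparatory inclusion does not use the continuity assumption~(iii) or any invariance property of the attractor, only its $\omega$-limit representation -- at the cost of redoing, inside this proof, a splitting argument that the paper has already packaged into the negative-invariance conclusion of Lemma~\ref{lm:existence-attractor}. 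Your contradiction framing with a sequence $u_n\to 0$ versus the paper's direct $\eps$--$\delta$ chain of inclusions is an immaterial difference, since the convergence $u\to 0$ is metric. All steps check out, including the set-valued triangle inequality $\dist(x,C)\le\dist(x,B')+\dist(B',C)$ and your correct observation that the absorption time may depend on $u$ without harm.
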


\begin{proof}
Since by our assumptions (i) to (iii) and by Lemma~\ref{lm:V(u)} each semiprocess family $S_{v \in \mathcal{V}(u)}$ satisfies the assumptions of Lemma~\ref{lm:existence-attractor}, that lemma yields the existence of a unique global attractor $\Theta_{\mathcal{V}(u)}$ of $S_{\mathcal{V}(u)}$ for every $u \in \mathcal{U}$. 
%
It remains to show the upper semicontinuity of $u \mapsto \Theta_{\mathcal{V}(u)}$ at $0$. So, let $\eps > 0$ and write $\Theta := \Theta_{\mathcal{V}(0)}$. We first observe that for every $u \in \mathcal{U}$ one has for large enough times $\tau_u$
\begin{align} \label{eq:upper-semicont-1}
\Theta_{\mathcal{V}(u)} \subset S_{\mathcal{V}(u)}\big(\tau_u,0, \Theta_{\mathcal{V}(u)}\big)\subset B_0
\end{align}
by \textcolor{black}{the negative invariance of $\Theta_{\mathcal{V}(u)}$ under $S_{\mathcal{V}(u)}$ and} the absorbingness of $B_0$ for $S_{\mathcal{V}(u)}$. 
Since $\Theta$ is a global attractor for $S_{\mathcal{V}(0)}$, 
there exists a $t_0 \in (0,\infty)$ such that
\begin{align} \label{eq:upper-semicont-2}
S_{\mathcal{V}(0)}(t_0,0,B_0) \subset B_{\eps/2}(\Theta).
\end{align}
Since, moreover, $\dist\big(S_{\mathcal{V}(u)}(t_0,0,B_0),S_{\mathcal{V}(0)}(t_0,0,B_0)\big) \longrightarrow 0$ as $u \to 0$ by our assumption~(iv), there exists a $\delta > 0$ such that
\begin{align} \label{eq:upper-semicont-3}
S_{\mathcal{V}(u)}(t_0,0,B_0) \subset B_{\eps/2}\big( S_{\mathcal{V}(0)}(t_0,0,B_0) \big) \qquad (u \in B_{\delta}^{L^{\infty}}(0)) 
\end{align}
Combining now~\eqref{eq:upper-semicont-1}, \eqref{eq:upper-semicont-2}, \eqref{eq:upper-semicont-3}, we see that
\begin{align}
\Theta_{\mathcal{V}(u)}  \subset S_{\mathcal{V}(u)}\big(t_0,0,\Theta_{\mathcal{V}(u)}\big) \subset S_{\mathcal{V}(u)}\big(t_0,0,B_0\big) \subset B_{\eps}(\Theta)
\end{align}
for all $u \in \mathcal{U}$ with $\norm{u}_{\infty} < \delta$, as desired. 
\end{proof}

With these lemmas at hand, we can now prove our general asymptotic gain result.

\begin{thm} \label{thm:wAG}
Suppose that $\mathcal{U}$ is as in~\eqref{eq:input-spaces} with $r_0 \le \infty$, $\mathcal{V}(u)$ and $T$ are as in~\eqref{eq:V(u)-def} and~\eqref{eq:transl-sgr} respectively. Suppose further that $(S_v)_{v\in\mathcal{V}(u)}$ is a semiprocess family on $X$ 
for every $u \in \mathcal{U}$ such that the assumptions (i) to (iv) from the previous lemma are satisfied. 
Then $S_{\{0\}}$ has a unique global attractor $\Theta$ and $(S_u)_{u\in \mathcal{U}}$ is of asymptotic gain w.r.t.~$\Theta$. 
\end{thm}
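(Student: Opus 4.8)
The plan is to feed assumptions (i)--(iv) into Lemma~\ref{lm:upper-semicont} and then to turn the resulting upper-semicontinuity estimate into a comparison function. By Lemma~\ref{lm:upper-semicont} the set-valued semiprocess $S_{\mathcal{V}(u)}$ has a unique global attractor $\Theta_{\mathcal{V}(u)}$ for every $u \in \mathcal{U}$; applying this with $u=0$ and recalling $\mathcal{V}(0)=\{0\}$ gives the unique global attractor $\Theta := \Theta_{\mathcal{V}(0)}$ of $S_{\{0\}}$, which is the first assertion. To prove asymptotic gain, I would fix $(x_0,u)\in X\times\mathcal{U}$ and note that, since $u\in\mathcal{V}(u)$, the individual trajectory satisfies $x(t,x_0,u)=S_u(t,0,x_0)\in S_{\mathcal{V}(u)}(t,0,\{x_0\})$. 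As $\{x_0\}$ is bounded, the uniform attractivity~\eqref{eq:def-uniformly-attractive} of $\Theta_{\mathcal{V}(u)}$ gives $\dist\big(x(t,x_0,u),\Theta_{\mathcal{V}(u)}\big)\le\dist\big(S_{\mathcal{V}(u)}(t,0,\{x_0\}),\Theta_{\mathcal{V}(u)}\big)\longrightarrow 0$ as $t\to\infty$. Using the elementary triangle inequality $\norm{\xi}_{\Theta}\le\dist(\xi,\Theta_{\mathcal{V}(u)})+\dist(\Theta_{\mathcal{V}(u)},\Theta)$ (valid for all $\xi\in X$ and obtained from $\norm{\xi-\theta}\le\norm{\xi-\theta_u}+\norm{\theta_u-\theta}$ by taking appropriate infima and suprema) with $\xi=x(t,x_0,u)$ and letting $t\to\infty$, the first summand vanishes and I arrive at the key pointwise bound
\[
\limsup_{t\to\infty}\norm{x(t,x_0,u)}_{\Theta}\le\dist\big(\Theta_{\mathcal{V}(u)},\Theta\big).
\]

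It remains to dominate the right-hand side by $\gamma(\norm{u}_\infty)$ for a single $\gamma\in\mathcal{K}$. To that end I would introduce the monotone envelope
\[
\omega(r):=\sup\big\{\dist(\Theta_{\mathcal{V}(u)},\Theta):u\in\mathcal{U},\ \norm{u}_\infty\le r\big\}\qquad(r\ge 0),
\]
so that $\dist(\Theta_{\mathcal{V}(u)},\Theta)\le\omega(\norm{u}_\infty)$ trivially. By construction $\omega$ is nondecreasing with $\omega(0)=0$, and it is finite (indeed $\omega(r)\le\dist(B_0,\Theta)<\infty$) because each attractor satisfies $\Theta_{\mathcal{V}(u)}\subset B_0$, as in~\eqref{eq:upper-semicont-1}, and $B_0$ is bounded while $\Theta$ is compact. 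Crucially, the upper semicontinuity~\eqref{eq:upper-semicont} provided by Lemma~\ref{lm:upper-semicont} is exactly the statement that $\omega(r)\to 0$ as $r\to 0^+$.

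Finally I would majorize $\omega$ by a genuine $\mathcal{K}$-function in the standard way. Writing $\tilde\omega(s):=\tfrac1s\int_s^{2s}\omega(\sigma)\d\sigma$ for $s>0$ and $\tilde\omega(0):=0$, the function $\tilde\omega$ is continuous, vanishes at $0$ and satisfies $\tilde\omega\ge\omega$ (because $\omega$ is nondecreasing); its running maximum $h(r):=\max_{0\le s\le r}\tilde\omega(s)$ is then continuous, nondecreasing, vanishes at $0$ and still dominates $\omega$, so that $\gamma(r):=h(r)+r$ is continuous and strictly increasing with $\gamma(0)=0$, i.e.\ $\gamma\in\mathcal{K}$, and $\gamma\ge\omega$. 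Combining this with the pointwise bound yields
\[
\limsup_{t\to\infty}\norm{x(t,x_0,u)}_{\Theta}\le\dist(\Theta_{\mathcal{V}(u)},\Theta)\le\omega(\norm{u}_\infty)\le\gamma(\norm{u}_\infty)
\]
for every $(x_0,u)\in X\times\mathcal{U}$, which is the asymptotic gain property. I expect the genuine difficulty to lie in this last passage from the $u$-wise upper-semicontinuity estimate to one universal comparison function of $\norm{u}_\infty$ alone; the attractivity argument and the triangle inequality in the first step are routine, and all the substantive work (existence, negative invariance, and upper semicontinuity of the $\Theta_{\mathcal{V}(u)}$) has already been encapsulated in Lemma~\ref{lm:upper-semicont}.
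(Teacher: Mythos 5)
Your proposal is correct and follows essentially the same route as the paper's proof: the same triangle-inequality decomposition $\norm{x(t,x_0,u)}_{\Theta} \le \dist\big(S_{\mathcal{V}(u)}(t,0,x_0),\Theta_{\mathcal{V}(u)}\big) + \dist\big(\Theta_{\mathcal{V}(u)},\Theta\big)$, the same appeal to Lemma~\ref{lm:upper-semicont} for existence and attractivity of the $\Theta_{\mathcal{V}(u)}$, and the same monotone envelope (your $\omega$ is exactly the paper's $\gamma_0(r) := \sup_{\norm{v}_{\infty}\le r} \dist(\Theta_{\mathcal{V}(v)},\Theta)$). The only deviations are cosmetic improvements: where the paper cites Lemma~2.5 of~\cite{ClLeSt98} to majorize $\gamma_0$ by a $\mathcal{K}$-function, you construct the majorant by hand (averaged integral, running maximum, adding $r$), and you make explicit the finiteness of the envelope via $\Theta_{\mathcal{V}(u)} \subset B_0$ as in~\eqref{eq:upper-semicont-1}, which the paper leaves implicit.
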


\begin{proof}
We use the alternative notation~\eqref{eq:alternative-notation-for-trajectories} and begin by observing that
\begin{align*}
\norm{x(t,x_0,u)}_{\Theta} = \inf \big\{ \norm{x(t,x_0,u)-\theta}: \theta \in \Theta \big\}
\le 
\norm{x(t,x_0,u)-\theta_u} + \dist(\Theta_{\mathcal{V}(u)},\Theta)
\end{align*} 
for every $\theta_u \in \Theta_{\mathcal{V}(u)}$ by the triangle inequality. 
So, taking the infimum over $\theta_u \in \Theta_{\mathcal{V}(u)}$ and using $x(t,x_0,u) \in S_{\mathcal{V}(u)}(t,0,x_0)$, we see that
\begin{align} \label{eq:wAG-thm-1}
\norm{x(t,x_0,u)}_{\Theta} \le \dist(S_{\mathcal{V}(u)}(t,0,x_0),\Theta_{\mathcal{V}(u)}) + \dist(\Theta_{\mathcal{V}(u)},\Theta)
\end{align}
for every $(x_0,u) \in X \times \mathcal{U}$ and $t \in \R^+_0$. 
Since $\Theta_{\mathcal{V}(u)}$ is a global attractor for $S_{\mathcal{V}(u)}$ (Lemma~\ref{lm:upper-semicont}), we conclude 
\begin{align} \label{eq:wAG-thm-2}
\limsup_{t\to\infty} \dist(S_{\mathcal{V}(u)}(t,0,x_0),\Theta_{\mathcal{V}(u)}) = 0 \qquad ((x_0,u) \in X \times \mathcal{U}).
\end{align}
Since, moreover, $u \mapsto \Theta_{\mathcal{V}(u)}$ is upper semicontinuous at $0$ (Lemma~\ref{lm:upper-semicont}), we further conclude that there exists a $\gamma \in \mathcal{K}$ such that
\begin{align} \label{eq:wAG-thm-3}
\dist(\Theta_{\mathcal{V}(u)},\Theta) \le \gamma(\norm{u}_{\infty}) \qquad (u \in \mathcal{U}).
\end{align}
Indeed, let $\delta(u) := \dist(\Theta_{\mathcal{V}(u)},\Theta)$ and $\gamma_0(r) := \sup_{\norm{v}_{\infty} \le r} \delta(v)$,
then by the definition of $\gamma_0$ we have that $\delta(u) \le \gamma_0(\norm{u}_{\infty})$ for all $u \in \mathcal{U}$ and that $\gamma_0$ is monotonically increasing and by the upper semicontinuity~\eqref{eq:upper-semicont} we have that $\gamma_0(r) \longrightarrow 0$ as $r \searrow 0$. And from these three facts about $\gamma_0$ in turn it follows that there exists a $\gamma \in \mathcal{K}$ with $\gamma_0 \le \gamma$ (Lemma~2.5 of~\cite{ClLeSt98}), which proves~\eqref{eq:wAG-thm-3}.
Combining now~\eqref{eq:wAG-thm-1}, \eqref{eq:wAG-thm-2}, \eqref{eq:wAG-thm-3}, we immediately obtain the claimed asymptotic gain property. 
\end{proof}

In order to verify the quite abstract assumptions of the above theorem in our applications, we will use the following corollary. 
Assumption~(i) of this corollary is a dissipation estimate and assumption~(ii) is a compactness condition. We point out that we have to require $r_0 < \infty$ in 
this corollary.

\begin{cor} \label{cor:wAG}
Suppose $\mathcal{U}$ is as in~\eqref{eq:input-spaces} with $r_0 < \infty$ and with $U$ separable and reflexive and let $\mathcal{V}(u)$ and $T$ be as in~\eqref{eq:V(u)-def} and~\eqref{eq:transl-sgr} respectively. Suppose further that $(S_v)_{v\in\mathcal{V}(u)}$ is a semiprocess family on a reflexive space $X$ 
for every $u \in \mathcal{U}$ such that the following assumptions are satisfied:
\begin{itemize}
\item[(i)] there exist a constant $\omega_0 \in (0,\infty)$ and continuous monotonically increasing functions $\sigma, \gamma: \R^+_0 \to \R^+_0$ such that
\begin{align} \label{eq:dissip-estimate,wAG-cor}
\norm{S_v(t,0,x_0)} \le \e^{-\omega_0 t} \sigma(\norm{x_0}) + \gamma(\norm{u}_{\infty}) \qquad (t \in [0,\infty))
\end{align}
for all $(x_0,v) \in  X \times \mathcal{V}(u)$ and all $u \in \mathcal{U}$
\item[(ii)] whenever $x_n \longrightarrow x$ weakly in $X$ and $v_n \longrightarrow v$ weakly in $L^2_{\mathrm{loc}}(\R^+_0,U)$ for some $x_n,x \in X$ and $v_n \in \mathcal{V}(u_n), v \in \mathcal{V}(u)$ and $u_n,u \in \mathcal{U}$, one has the strong convergence 
\begin{align}
S_{v_n}(t_0,0,x_n) \longrightarrow S_v(t_0,0,x) \qquad (n\to \infty)
\end{align}
for every $t_0 \in (0,\infty)$.
\end{itemize}
Then $S_{\{0\}}$ has a unique global attractor $\Theta$ and $(S_u)_{u\in \mathcal{U}}$ is of asymptotic gain w.r.t.~$\Theta$. 
\end{cor}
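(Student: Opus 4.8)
The plan is to derive this corollary from Theorem~\ref{thm:wAG} by verifying that its two concrete hypotheses~(i)--(ii), together with the reflexivity of $X$ and $U$ and the finiteness of $r_0$, imply the four abstract assumptions~(i)--(iv) of Lemma~\ref{lm:upper-semicont}, which are precisely the hypotheses of Theorem~\ref{thm:wAG}. Once these are in place, Theorem~\ref{thm:wAG} yields at one stroke the existence of the undisturbed attractor $\Theta$ and the asymptotic gain property, so nothing further is required. Throughout, Lemma~\ref{lm:V(u)} supplies the standing structural facts about $\mathcal{V}(u)$: it is compact, metrizable (hence first-countable) and translation-invariant, and every $v \in \mathcal{V}(u)$ obeys the bound~\eqref{eq:estimate-for-transl-bounded-norm}.

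First I would construct a uniform bounded absorbing set. Since every $u \in \mathcal{U}$ satisfies $\norm{u}_{\infty} \le r_0 < \infty$, the dissipation estimate~\eqref{eq:dissip-estimate,wAG-cor} gives $\norm{S_v(t,0,x_0)} \le \e^{-\omega_0 t}\sigma(\norm{x_0}) + \gamma(r_0)$ uniformly over $u \in \mathcal{U}$ and $v \in \mathcal{V}(u)$. Hence $B_0 := \ol{B}_{1+\gamma(r_0)}(0)$ is absorbing for $S_{\mathcal{V}(u)}$ simultaneously for all $u$: given a bounded set $B$ of radius $R$, choose $\tau_B$ with $\e^{-\omega_0 t}\sigma(R) \le 1$ for $t \ge \tau_B$. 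This is assumption~(i) of Lemma~\ref{lm:upper-semicont}, and it is exactly here that finiteness of $r_0$ is indispensable, since for $r_0 = \infty$ the term $\gamma(\norm{u}_{\infty})$ need not stay bounded over $\mathcal{U}$. Assumption~(iii) of the lemma, the continuity of $(x,v)\mapsto S_v(t_0,0,x)$ on $X \times \mathcal{V}(u)$, then follows almost verbatim from hypothesis~(ii) of the corollary: both factors are metrizable, so sequential continuity suffices, and strong convergence $x_n \to x$ in $X$ implies weak convergence, so hypothesis~(ii) applies directly with all inputs in the single space $\mathcal{V}(u)$.

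The substantive work lies in assumptions~(ii) and~(iv). For the asymptotic compactness~(ii), given $(t_n,x_n,v_n)$ with $t_n \to \infty$, $\sup_n \norm{x_n} < \infty$ and $v_n \in \mathcal{V}(u)$, I would split via the cocycle property~\eqref{eq:cocycle-prop},
\[
S_{v_n}(t_n,0,x_n) = S_{T(t_n-1)v_n}\big(1,0,S_{v_n}(t_n-1,0,x_n)\big),
\]
and note that $y_n := S_{v_n}(t_n-1,0,x_n)$ is bounded by~\eqref{eq:dissip-estimate,wAG-cor}, hence has a weakly convergent subsequence in the reflexive space $X$, while $w_n := T(t_n-1)v_n$ lies in the compact metrizable set $\mathcal{V}(u)$ and so has a weakly convergent subsequence in $L^2_{\mathrm{loc}}$. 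Passing to a common subsequence and applying hypothesis~(ii) with $t_0 = 1$ upgrades these weak convergences to a strong limit of $S_{w_n}(1,0,y_n) = S_{v_n}(t_n,0,x_n)$, which is the convergent subsequence required. For assumption~(iv) I would argue by contradiction: if $\dist\big(S_{\mathcal{V}(u)}(t_0,0,B_0),S_{\mathcal{V}(0)}(t_0,0,B_0)\big)\not\to 0$, pick $u_n$ with $\norm{u_n}_{\infty}\to 0$ and points $a_n = S_{v_n}(t_0,0,x_n)$, $x_n \in B_0$, $v_n \in \mathcal{V}(u_n)$, staying a fixed distance from $S_{\mathcal{V}(0)}(t_0,0,B_0)$. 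The estimate~\eqref{eq:estimate-for-transl-bounded-norm} forces $\norm{v_n}_{[0,t_0],2} \le t_0^{1/2}\norm{u_n}_{\infty}\to 0$, so $v_n \to 0$ strongly in $L^2_{\mathrm{loc}}$, while $x_n \to x$ weakly along a subsequence with $x \in B_0$ (as $B_0$ is convex and closed, hence weakly closed). Hypothesis~(ii) then gives $a_n \to S_0(t_0,0,x) \in S_{\mathcal{V}(0)}(t_0,0,B_0)$ strongly, the desired contradiction.

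I expect the asymptotic compactness step~(ii) to be the main obstacle, since it is the one point where three ingredients must be orchestrated simultaneously -- the cocycle splitting, weak sequential compactness of bounded sets in the reflexive $X$, and the weak compactness of $\mathcal{V}(u)$ from Lemma~\ref{lm:V(u)} -- before hypothesis~(ii) can be invoked to pass from weak to strong convergence. Assumption~(iv) exploits the same weak-to-strong mechanism but is easier, because smallness of $\norm{u}_{\infty}$ translates, through~\eqref{eq:estimate-for-transl-bounded-norm}, into genuine strong $L^2_{\mathrm{loc}}$-convergence of the disturbances to $0$. With all of~(i)--(iv) verified, the conclusion follows at once from Theorem~\ref{thm:wAG}.
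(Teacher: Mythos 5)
Your proposal is correct and follows essentially the same route as the paper: both proofs verify assumptions (i)--(iv) of Lemma~\ref{lm:upper-semicont} and then invoke Theorem~\ref{thm:wAG}, with the same absorbing ball $B_0 = \ol{B}_{1+\gamma(r_0)}(0)$, the same sequential-continuity argument for (iii), and the same contradiction argument (using \eqref{eq:estimate-for-transl-bounded-norm} and weak sequential compactness of $B_0$) for (iv). The only cosmetic difference is in the asymptotic-compactness step, where the paper packages your cocycle-splitting argument as the statement that $K_u := S_{\mathcal{V}(u)}(1,0,B_0)$ is a compact absorbing set, but the ingredients -- time window of length $1$, weak sequential compactness of $B_0$ in the reflexive space $X$ and of $\mathcal{V}(u)$ from Lemma~\ref{lm:V(u)}, and hypothesis (ii) with $t_0 = 1$ -- are identical.
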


\begin{proof}
We verify the assumptions~(i) to (iv) of the previous theorem in four steps. 
%
As a first step, we show that the bounded ball ($r_0 < \infty$!)
\begin{align}
B_0 := \ol{B}_{1+\gamma(r_0)}(0) \subset X
\end{align}
is absorbing for $S_{\mathcal{V}(u)}$ for every $u \in \mathcal{U}$. 
Indeed, for every bounded subset $B \subset X$, there is a radius $R$ with $B \subset \ol{B}_R(0)$ and a time $\tau_B$ such that $\e^{-\omega_0 \tau_B} \sigma(R) \le 1$. 
So, by our assumption~(i), 
\begin{align}
\norm{S_v(t,0,x_0)} \le \e^{-\omega_0 t} \sigma(\norm{x_0}) + \gamma(\norm{u}_{\infty}) \le 1 + \gamma(r_0) \qquad (t \ge \tau_B)
\end{align}
for all $(x_0,v) \in B \times \mathcal{V}(u)$ and all $u \in \mathcal{U}$, that is, $S_{\mathcal{V}(u)}(t,0,B) \subset B_0$ for every $t \ge \tau_B$ and every $u \in \mathcal{U}$, as desired. 
\smallskip

As a second step, we show that $S_{\mathcal{V}(u)}$ is asymptotically compact for every $u \in \mathcal{U}$. 
So, let $u \in \mathcal{U}$.
%
As a first preliminary, we observe that the set 
\begin{align}
K_u := S_{\mathcal{V}(u)}(1,0,B_0)
\end{align}
is compact. Indeed, let $(x_n)$ and $(v_n)$ be sequences in $B_0$ and $\mathcal{V}(u)$ respectively. Since $B_0$ as a closed bounded ball is weakly sequentially compact by the assumed reflexivity of $X$ and since $\mathcal{V}(u)$ is sequentially compact in the weak topology of $L^2_{\mathrm{loc}}(\R^+_0,U)$ by Lemma~\ref{lm:V(u)}, 
there exist subsequences and $x \in B_0$ and $v \in \mathcal{V}(u)$ such that $x_{n_k} \longrightarrow x$ weakly in $X$ and $v_{n_k} \longrightarrow v$ weakly in $L^2_{\mathrm{loc}}(\R^+_0,U)$. So, by our assumption~(ii), 
\begin{align}
S_{v_{n_k}}(1,0,x_{n_k}) \longrightarrow S_v(1,0,x) \in K_u \qquad (k\to\infty),
\end{align}
proving the claimed compactness of $K_u$. 
%
As a second preliminary, we observe that the set $K_u$ is absorbing for $S_{\mathcal{V}(u)}$. 
Indeed, by the first step, for every bounded subset $B \subset X$ there exists a time $\tau_B$ such that $S_{\mathcal{V}(u)}(t,0,B) \subset B_0$ for all $t \ge \tau_B$ and therefore we have for every $t \ge \tau_B + 1$ that
\begin{align}
S_{\mathcal{V}(u)}(t,0,B) \subset S_{T(t-1)\mathcal{V}(u)}\big(1,0, S_{\mathcal{V}(u)}(t-1,0,B) \big) \subset S_{\mathcal{V}(u)}(1,0,B_0) = K_u,
\end{align}
proving the claimed absorbingness of $K_u$.
Combining now the compactness and the absorbingness of $K_u$, we immediately 
get the desired asymptotic compactness of $S_{\mathcal{V}(u)}$. 
\smallskip

As a third step, we observe that $X \times \mathcal{V}(u) \ni (x,v) \mapsto S_v(t_0,0,x)$ is continuous for every $t_0 \in (0,\infty)$ and $u \in \mathcal{U}$. Indeed, the sequential continuity of these maps is immediate from our assumption~(ii) and thus the desired continuity immediately follows by the first-countability of $X \times \mathcal{V}(u)$ (Lemma~\ref{lm:V(u)}).
\smallskip

As a fourth and last step, we show that $\dist\big(S_{\mathcal{V}(u)}(t_0,0,B_0),S_{\mathcal{V}(0)}(t_0,0,B_0)\big) \longrightarrow 0$ as $u \to 0$ for every $t_0 \in (0,\infty)$. 
Assuming the contrary, we find a $t_0 \in (0,\infty)$, an $\eps > 0$ and a sequence $(u_n)$ in $\mathcal{U}$ such that
\begin{align*}
\norm{u_n}_{\infty} \le 1/n
\qquad \text{but} \qquad
S_{\mathcal{V}(u_n)}(t_0,0,B_0) \not\subset B_{\eps}\big( S_{\mathcal{V}(0)}(t_0,0,B_0)\big)
\end{align*}
for all $n \in \N$. So, there exist $x_n \in B_0$ and $v_n \in \mathcal{V}(u_n)$ such that
\begin{align} \label{eq:wAG-cor-step-4,1}
S_{v_n}(t_0,0,x_n) \notin B_{\eps}\big( S_{\{0\}}(t_0,0,B_0)\big) \qquad (n \in \N).  
\end{align}
Since $B_0$ 
is weakly sequentially compact by the reflexivity of $X$ and since $$\norm{v_n}_{[0,t],2} \le t^{1/2} \norm{u_n}_{\infty} \le t^{1/2}/n$$ by Lemma~\ref{lm:V(u)},
 there exist a subsequence and an $x \in B_0$ such that $x_{n_k} \longrightarrow x$ weakly in $X$ and $v_{n_k} \longrightarrow 0$ (weakly) in $L^2_{\mathrm{loc}}(\R^+_0,U)$. So, by our assumption~(ii),
\begin{align}
S_{v_{n_k}}(t_0,0,x_{n_k}) \longrightarrow S_0(t_0,0,x) \in S_{\{0\}}(t_0,0,B_0) \qquad (k\to\infty).
\end{align}
Contradiction to~\eqref{eq:wAG-cor-step-4,1}!
\end{proof}

In the special situation of Corollary~\ref{cor:wAG} where the dissipation estimate~\eqref{eq:dissip-estimate,wAG-cor} holds with $\sigma(0) = 0 = \gamma(0)$ (or, in other words, with $\sigma, \gamma \in \mathcal{K}$), that very estimate implies that 
the global attractor of $S_{\{0\}}$ is $\Theta = \{0\}$ and that $(S_u)_{u\in\mathcal{U}}$  is even input-to-state stable w.r.t.~$\Theta$. In the general situation of Corollary~\ref{cor:wAG}, we still get at least input-to-state practical stability. 
In fact, we have the following proposition, in which $r_0 = \infty$ is allowed again.

\begin{prop} \label{prop:ISpS}
Suppose $\mathcal{U}$ is as in~\eqref{eq:input-spaces} with $r_0 \le \infty$ 
and $(S_u)_{u\in\mathcal{U}}$ is a semiprocess family on  $X$ 
such that the following assumptions are satisfied:
\begin{itemize}
\item[(i)] there exist a constant $\omega_0 \in (0,\infty)$ and continuous monotonically increasing functions $\sigma, \gamma: \R^+_0 \to \R^+_0$ such that
\begin{align} \label{eq:ISpS-assumption-1}
\norm{S_u(t,0,x_0)} \le \e^{-\omega_0 t} \sigma(\norm{x_0}) + \gamma(\norm{u}_{\infty}) \qquad (t \in [0,\infty))
\end{align}
for all $(x_0,u) \in  X \times \mathcal{U}$ 
\item[(ii)] $S_{\{0\}}$ has a unique global attractor $\Theta$.
\end{itemize}
Then $(S_u)_{u\in\mathcal{U}}$  is input-to-state practically stable w.r.t.~$\Theta$.
\end{prop}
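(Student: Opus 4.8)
The plan is to read off the ISpS estimate directly from the dissipation estimate~\eqref{eq:ISpS-assumption-1}, trading the gap between the ambient norm $\norm{\cdot}$ and the distance-to-$\Theta$ functional $\norm{\cdot}_\Theta$ against a practical-stability constant $c$. The only structural fact about $\Theta$ I would use is that, being a global attractor, it is compact and hence bounded, so that $R_\Theta := \sup_{\theta\in\Theta}\norm{\theta} < \infty$. From this I get the two elementary norm comparisons
\begin{align}\label{eq:isps-plan-norms}
\norm{x}_\Theta \le \norm{x} + R_\Theta \quad\text{and}\quad \norm{x_0} \le \norm{x_0}_\Theta + R_\Theta \qquad (x, x_0 \in X),
\end{align}
where the first holds because $\norm{x}_\Theta \le \norm{x-\theta} \le \norm{x}+R_\Theta$ for any $\theta\in\Theta$, and the second uses the compactness of $\Theta$ to produce a minimizer $\theta^*\in\Theta$ with $\norm{x_0-\theta^*}=\norm{x_0}_\Theta$.

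First I would combine~\eqref{eq:isps-plan-norms} with the dissipation estimate~\eqref{eq:ISpS-assumption-1} and the monotonicity of $\sigma$ to obtain, for all $(x_0,u)\in X\times\mathcal{U}$ and $t\in\R^+_0$,
\begin{align}\label{eq:isps-plan-combined}
\norm{x(t,x_0,u)}_\Theta \le \norm{S_u(t,0,x_0)} + R_\Theta \le \e^{-\omega_0 t}\sigma(\norm{x_0}_\Theta + R_\Theta) + \gamma(\norm{u}_\infty) + R_\Theta.
\end{align}
The remaining task is pure comparison-function bookkeeping: reshaping the right-hand side of~\eqref{eq:isps-plan-combined} into $\beta(\norm{x_0}_\Theta,t) + \gamma(\norm{u}_\infty) + c$ with genuine $\beta\in\mathcal{KL}$, $\gamma\in\mathcal{K}$ and $c\in\R^+_0$. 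I expect this to be the only real subtlety, and it is exactly where the \emph{practical} constant $c$ is unavoidable: the data $\sigma,\gamma$ in~\eqref{eq:ISpS-assumption-1} are merely continuous and monotonically increasing and need not vanish at the origin, whereas the classes $\mathcal{KL}$ and $\mathcal{K}$ require strict monotonicity together with $\beta(0,t)=0$ and $\gamma(0)=0$.

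To build $\beta$, I would split off the value at the origin by setting $\tilde\sigma(r) := \sigma(r + R_\Theta) - \sigma(R_\Theta)$, which is continuous, monotonically increasing and satisfies $\tilde\sigma(0)=0$, and then dominate it by the strictly increasing unbounded $\hat\sigma(r) := \tilde\sigma(r) + r \in \mathcal{K}_\infty$. Putting $\beta(r,t) := \e^{-\omega_0 t}\hat\sigma(r)$ yields a genuine $\mathcal{KL}$-function, since for fixed $t$ the map $r\mapsto \e^{-\omega_0 t}\hat\sigma(r)$ lies in $\mathcal{K}$ and for fixed $r>0$ the map $t\mapsto\e^{-\omega_0 t}\hat\sigma(r)$ lies in $\mathcal{L}$ as $\omega_0>0$. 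Because $\e^{-\omega_0 t}\sigma(\norm{x_0}_\Theta+R_\Theta) = \e^{-\omega_0 t}\tilde\sigma(\norm{x_0}_\Theta) + \e^{-\omega_0 t}\sigma(R_\Theta) \le \beta(\norm{x_0}_\Theta,t) + \sigma(R_\Theta)$, the first summand of~\eqref{eq:isps-plan-combined} is absorbed into $\beta$ up to the constant $\sigma(R_\Theta)$.

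Finally I would treat the input term in the same manner: $\bar\gamma(r):=\gamma(r)-\gamma(0)$ is continuous and monotonically increasing with $\bar\gamma(0)=0$, and $\hat\gamma(r):=\bar\gamma(r)+r\in\mathcal{K}_\infty$ dominates it, so that $\gamma(\norm{u}_\infty) \le \hat\gamma(\norm{u}_\infty) + \gamma(0)$. Collecting the three leftover constants, estimate~\eqref{eq:isps-plan-combined} becomes
\begin{align}\label{eq:isps-plan-final}
\norm{x(t,x_0,u)}_\Theta \le \beta(\norm{x_0}_\Theta,t) + \hat\gamma(\norm{u}_\infty) + c, \qquad c := R_\Theta + \sigma(R_\Theta) + \gamma(0),
\end{align}
which is exactly the ISpS estimate~\eqref{eq:def-ISpS} with $\beta\in\mathcal{KL}$, $\hat\gamma\in\mathcal{K}_\infty\subset\mathcal{K}$ and $c\in\R^+_0$. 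It is worth noting that assumption~(ii) is used only through the compactness, hence boundedness, of $\Theta$ (which is what makes $R_\Theta$ finite); no finer attractor property such as invariance is needed for this proposition.
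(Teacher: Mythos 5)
Your proof is correct and follows essentially the same route as the paper's: bound $\norm{\cdot}_{\Theta}$ by the ambient norm plus the (finite, by compactness of the attractor) size of $\Theta$, feed this into the dissipation estimate~\eqref{eq:ISpS-assumption-1}, and absorb the nonzero offsets of $\sigma$ and $\gamma$ at the origin into the practical constant $c$. The only cosmetic differences are in the comparison-function bookkeeping: where the paper uses $\sigma(a+b) \le \sigma(2a) + \sigma(2b)$ and cites Lemma~2.5 of~\cite{ClLeSt98} for the existence of $\mathcal{K}$-majorants, you use the shift $\tilde\sigma(r) = \sigma(r+R_{\Theta}) - \sigma(R_{\Theta})$ and construct the majorants explicitly by adding $r$ -- both equally valid (and your remark is accurate: only boundedness of $\Theta$, not invariance, enters the argument, just as in the paper).
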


\begin{proof}
Since $\norm{x_0} \le \inf_{\theta \in \Theta} ( \norm{x_0-\theta} + \norm{\theta} ) \le \norm{x_0}_{\Theta} + \norm{\Theta}$ 
for every $x_0 \in X$ with $\norm{\Theta} := \sup_{\theta \in \Theta} \norm{\theta}$, 
we conclude from~\eqref{eq:ISpS-assumption-1} that
\begin{align} \label{eq:ISpS,1}
\norm{S_u(t,0,x_0)}_{\Theta} 
&= \inf_{\theta\in\Theta} \norm{S_u(t,0,x_0)-\theta} 
\le \e^{-\omega_0 t} \sigma\big( \norm{x_0}_{\Theta} + \norm{\Theta} \big) + \gamma(\norm{u}_{\infty}) + \inf_{\theta\in\Theta} \norm{\theta} \notag\\
&\le \e^{-\omega_0 t} \sigma(2\norm{x_0}_{\Theta}) + \sigma(2 \norm{\Theta}) + \gamma(\norm{u}_{\infty}) + \inf_{\theta\in\Theta} \norm{\theta}
\end{align}
for every $t \in \R^+_0$ and every $(x_0,u) \in X \times \mathcal{U}$. Since, moreover, $r \mapsto \sigma(2r)-\sigma(0)$ and $r \mapsto \gamma(r)-\gamma(0)$ are continuous and monotonically increasing and zero at zero, there exist $\ol{\sigma}, \ol{\gamma} \in \mathcal{K}$ such that
\begin{align} \label{eq:ISpS,2}
\sigma(2r)-\sigma(0) \le \ol{\sigma}(r) \qquad \text{and} \qquad \gamma(r)-\gamma(0) \le \ol{\gamma}(r)
\end{align} 
for all $r \in \R^+_0$. Combining now~\eqref{eq:ISpS,1} and~\eqref{eq:ISpS,2}, we obtain
\begin{align}
\norm{S_u(t,0,x_0)}_{\Theta}  \le \e^{-\omega_0 t} \, \ol{\sigma}(\norm{x_0}_{\Theta}) + \ol{\gamma}(\norm{u}_{\infty}) + c \qquad (t \in \R^+_0)
\end{align}
for every $(x_0,u) \in X\times \mathcal{U}$, where $c := \sigma(0) + \sigma(2 \norm{\Theta}) + \gamma(0) + \inf_{\theta\in\Theta} \norm{\theta}$. And therefore, $(S_u)_{u\in\mathcal{U}}$ is input-to-state practically stable w.r.t.~the global attractor $\Theta$ of $S_{\{0\}}$. 
\end{proof}

\section{Applications to semilinear systems}

In this section, we apply our general asymptotic gain result along with the input-to-state practical stability result from the previous section to semilinear evolution equations. 
We will consider equations of the special form
\begin{align} \label{eq:semilin-evol-eq-u}
\dot{x}(t) = Ax(t) + g(x(t)) + h u(t),
\end{align}
where $A$ is a linear semigroup generator on $X$, $g$ a nonlinear function in $X$, and $h$ is a bounded linear operator from $U$ to $X$. 
In particular, we will consider disturbed nonlinear reaction-diffusion equations of the form
\begin{align} \label{eq:react-diffus-eq}
\begin{split}
\partial_t y(t,\zeta) &= \Delta y(t,\zeta) + g(y(t,\zeta)) + h(\zeta) u(t) \qquad (\zeta \in \Omega) \\
y(t,\zeta) &= 0 \qquad (\zeta \in \partial \Omega)
\end{split} 
\end{align}
on a bounded domain $\Omega \subset \R^d$, where $h \in X := L^2(\Omega,\R)$. As usual, we will embed the equations~\eqref{eq:semilin-evol-eq-u} and~\eqref{eq:react-diffus-eq} into a family 
of equations parametrized by $v \in L^2_{\mathrm{loc}}(\R^+_0,U)$ and, in the case of~\eqref{eq:react-diffus-eq}, we record this family of equations for later reference:
\begin{align} \label{eq:ibvp-v}
\begin{split}
\partial_t y(t,\zeta) &= \Delta y(t,\zeta) + g(y(t,\zeta)) + h(\zeta) v(t) \qquad ((t,\zeta) \in [s,\infty) \times \Omega) \\
y(t,\cdot)|_{\partial \Omega} &= 0 \qquad \text{and} \qquad y(s,\cdot) = y_s \qquad (t \in [s,\infty)).
\end{split} 
\end{align}
\textcolor{black}{Since we will deal with mild and weak solutions (instead of classical solutions), the asymptotic gain and input-to-state practical stability estimates in Corollary~\ref{cor:wAG+ISpS,mild} and~\ref{cor:wAG+ISpS,weak} are valid for all initial states $y_0 \in X = L^2(\Omega)$ (instead of just those from $y_0 \in H^1_0(\Omega)$, say) -- just like our definitions~\eqref{eq:def-wAG} and~\eqref{eq:def-ISpS} require. }

\subsection{Applications in the case of mild solvability}

In this section, we establish an asymptotic gain and an input-to-state practical stability result for the general equation~\eqref{eq:semilin-evol-eq-u} and for the reaction-diffusion equation~\eqref{eq:react-diffus-eq}, taking a mild-solution approach~\cite{Pazy} and taking 
\begin{align}
\mathcal{U} := \mathcal{U}_{1 r_0} = S^{\infty}(\R^+_0,U) \cap \ol{B}_{r_0}^{L^{\infty}}(0).
\end{align}
Accordingly, in this section, 
we have to require the nonlinearity $g$ to be locally Lipschitz continuous.

\begin{lm} \label{lm:max-mild-sol}
Suppose $A$ is a semigroup generator on $X$ and $g: X \to X$ is Lipschitz on bounded subsets of $X$ and $h \in L(U,X)$. Then for every $(s,x_s,v) \in \R^+_0 \times X \times L^2_{\mathrm{loc}}(\R^+_0,U)$ the initial value problem
\begin{align} \label{eq:semilin-ivp-v}
\dot{x}(t) = Ax(t) + g(x(t)) + h v(t) \qquad \text{and} \qquad x(s) = x_s 
\end{align}
has a unique maximal mild solution $x(\cdot,s,x_s,v) \in C([s,T_{s,x_s,v}),X)$. Additionally, if this maximal mild solution is bounded,
\begin{align} \label{eq:max-mild-sol-bd}
\sup_{t \in [s,T_{s,x_s,v})} \norm{x(t,s,x_s,v)} < \infty,
\end{align}
then $x(\cdot,s,x_s,v)$ exists globally in time, that is, $T_{s,x_s,v} = \infty$. 
\end{lm}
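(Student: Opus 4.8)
The plan is to recast the initial value problem~\eqref{eq:semilin-ivp-v} in its equivalent mild (Duhamel) form
\begin{align*}
x(t) = \e^{(t-s)A} x_s + \int_s^t \e^{(t-r)A}\big( g(x(r)) + h v(r) \big) \d r
\end{align*}
and to obtain a local solution by a Banach fixed point argument. Writing $\Phi$ for the right-hand side, viewed as a map on a closed ball of $C([s,s+\delta],X)$ for a suitably small $\delta > 0$, I would first record the elementary bounds $\norm{\e^{\tau A}} \le M \e^{\omega \tau} \le M'$ for $\tau \in [0,\delta]$ (valid since $A$ generates a $C_0$-semigroup) and the integrability of $r \mapsto \e^{(t-r)A} h v(r)$, which follows from $h \in L(U,X)$, $v \in L^2_{\mathrm{loc}}$ and Cauchy--Schwarz via $\int_s^t \norm{\e^{(t-r)A} h v(r)} \d r \le M' \norm{h}\, (t-s)^{1/2}\, \norm{v}_{[s,t],2}$. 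A standard density-plus-strong-continuity argument shows that the Duhamel term is continuous in $t$, so $\Phi$ indeed maps into $C([s,s+\delta],X)$.

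Next I would verify the two fixed-point hypotheses. Working on the ball $\mathcal{B} := \{x : \norm{x(\cdot)-\e^{(\cdot-s)A}x_s}_\infty \le R\}$ and letting $L$ denote the Lipschitz constant of $g$ on $\ol{B}_{M'\norm{x_s}+R}(0)$ (which contains every trajectory in $\mathcal{B}$), the self-mapping property of $\Phi$ holds once $\delta$ is small enough that $M'\delta\,\big(L(M'\norm{x_s}+R)+\norm{g(0)}\big) + M'\norm{h}\,\delta^{1/2}\norm{v}_{[s,s+\delta],2} \le R$, while the contraction property follows from $\norm{\Phi x - \Phi y}_\infty \le M' L \delta \norm{x-y}_\infty$ once $M' L \delta < 1$. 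The Banach fixed point theorem then yields a unique local mild solution; uniqueness on any common interval of existence follows either from the same contraction estimate on small subintervals or directly from Gronwall's inequality. Patching these local solutions together along the usual consistency argument produces a unique maximal mild solution $x(\cdot,s,x_s,v) \in C([s,T_{s,x_s,v}),X)$, where $T_{s,x_s,v}$ is defined as the supremum of the existence times.

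Finally, for the blow-up alternative I would argue by contradiction: assume $T := T_{s,x_s,v} < \infty$ while the solution stays in some ball $\ol{B}_K(0)$, as in~\eqref{eq:max-mild-sol-bd}. The decisive observation is that the local existence time $\delta$ produced above can be chosen \emph{uniformly} for all initial data in $\ol{B}_K(0)$ and all initial times in the bounded interval $[s,T]$: on such a bounded time window the semigroup bound $M'$, the Lipschitz constant $L$ on $\ol{B}_{K+R}(0)$, and the quantity $\norm{v}_{[s,T+1],2}$ are all controlled independently of the starting point and the starting time. Picking then $\tau \in (T-\delta,T)$ and solving the problem with initial data $x(\tau) \in \ol{B}_K(0)$ at time $\tau$ gives a mild solution on $[\tau,\tau+\delta]$ with $\tau+\delta > T$; by uniqueness it coincides with $x(\cdot,s,x_s,v)$ on $[\tau,T)$ and hence extends it beyond $T$, contradicting the maximality of $T$. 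I expect the main obstacle to be precisely this uniformity of $\delta$ (rather than the routine fixed-point estimates), together with the care needed to justify continuity and integrability of the Duhamel term for merely $L^2_{\mathrm{loc}}$ inputs.
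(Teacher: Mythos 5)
Your proposal is correct, and it splits naturally into two halves. The existence-uniqueness half follows the same route as the paper: the paper simply invokes the fixed-point argument of Theorem~6.1.4 in Pazy's book, noting that it must be adapted because the input is only in $L^2_{\mathrm{loc}}$ rather than continuous, and your Cauchy--Schwarz bound $\int_s^t \norm{\e^{(t-r)A}hv(r)} \d r \le M' \norm{h} (t-s)^{1/2} \norm{v}_{[s,t],2}$ together with the density argument for continuity of the Duhamel term is exactly the required adaptation. Where you genuinely diverge is in the boundedness-implies-global-existence half. You use the classical restart argument: a local existence time $\delta$ uniform over initial values in $\ol{B}_K(0)$ and initial times in $[s,T]$, then a restart at $\tau \in (T-\delta,T)$ to push past $T$. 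Your uniformity claim does hold, and in fact more easily than you suggest: the $\delta^{1/2}$ prefactor lets you bound the input term by $M'\norm{h}\,\delta^{1/2}\norm{v}_{[s,T+1],2}$, so no uniform absolute-continuity argument is needed. The paper instead avoids the uniformity question altogether: under the boundedness assumption~\eqref{eq:max-mild-sol-bd} it shows that $x$ is \emph{uniformly continuous} on $[s,T)$ -- using uniform continuity of $r \mapsto \e^{Ar}h$ and $(r,r') \mapsto \e^{Ar}g(x(r'))$ on compact sets together with $\int_{t_0}^T |v(r)| \d r \to 0$ and $\int_{t_0}^T \norm{g(x(r))} \d r \to 0$ as $t_0 \nearrow T$ -- so that $x$ extends to some $\tilde{x} \in C([s,T],X)$ which is verified, by passing to the limit in the variation-of-constants formula, to be a mild solution on the closed interval $[s,T]$, directly contradicting maximality. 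The trade-off: the paper's continuous-extension argument is shorter given that the fixed-point machinery has already been delegated to Pazy, whereas your restart argument is self-contained once the local theory is set up quantitatively (as yours is, with explicit constants), at the cost of tracking the dependence of $\delta$ on $K$, $M'$, $L$ and $\norm{v}_{[s,T+1],2}$. Both are complete proofs.
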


\begin{proof}
We can argue in a standard way as in~\cite{Pazy} (Theorem~6.1.4). It should be noticed, however, that the mentioned result covers only the special case where $v \in C(\R^+_0,U)$, 
which is why we sketch the proof for the general case here. 
In order to get the unique maximal mild solvability assertion, we can argue in exactly the same way as in Theorem~6.1.4 of~\cite{Pazy}. 
In order to get the global existence assertion under the additional boundedness assumption~\eqref{eq:max-mild-sol-bd}, we argue by contradiction. So, assume that for some $(s,x_s,v) \in \R^+_0 \times X \times L^2_{\mathrm{loc}}(\R^+_0,U)$ 
\begin{align} \label{eq:max-mild-sol,contradiction-ass}
\sup_{t \in [s,T)} \norm{x(t)} < \infty 
\qquad \text{but} \qquad
T < \infty,
\end{align}
where $T := T_{s,x_s,v}$ and $x := x(\cdot,s,x_s,v) \in C([0,T),X)$ for brevity. 
It is then easy to see that $x$ is uniformly continuous on $[0,T)$ -- just use that under assumption~\eqref{eq:max-mild-sol,contradiction-ass} for every $t_0 \in [s,T)$, the maps $[s,T] \ni r \mapsto \e^{A r}h$ and $[s,T]\times [s,t_0] \ni (r,r') \mapsto \e^{Ar} g(x(r'))$ are uniformly continuous and that
\begin{align*}
\int_{t_0}^T |v(r)| \d r, \quad \int_{t_0}^T \norm{g(x(r))} \d r \longrightarrow 0 \qquad (t_0 \nearrow T).
\end{align*}
Consequently, $x$ extends to a continuous function $\tilde{x} \in C([0,T],X)$ 
and
\begin{align*}
\tilde{x}(t) = \e^{A(t-s)}x_s + \int_s^t \e^{A(t-r)}g(\tilde{x}(r)) \d r + \int_s^t \e^{A(t-r)} h v(r) \d r 
\qquad (t \in [0,T]).
\end{align*}
In other words, $\tilde{x}$ is a mild solution of~\eqref{eq:semilin-ivp-v} on $[0,T]$. Contradiction to the maximality of the mild solution $x$!
\end{proof}

In the rest of this section, we will be dealing with compact semigroups~\cite{Pazy}, that is, semigroups $\e^{A\cdot}$ for which $\e^{At}$ is a compact operator on $X$ for every $t \in (0,\infty)$. 
(In~\cite{EnNa}, such semigroups are called immediately compact.) 

\begin{prop} \label{prop:spec-ISpS-mild}
Suppose $A$ is the generator of a compact semigroup on a Hilbert space $X$ and $g: X \to X$ is Lipschitz on bounded subsets of $X$ and $h \in L(U,X)$ with a seperable Hilbert space $U$. Suppose further that
\begin{align} \label{eq:A-omega-dissipative-and-g-C-damping}
\norm{\e^{At}} \le \e^{-\omega t} \qquad (t \in \R^+_0)
\qquad \text{and} \qquad
\scprd{x,g(x)} \le C + \omega' \norm{x}^2 \qquad (x \in X)
\end{align}
for some constants $\omega \in (0,\infty)$ and $C \in \R$, $\omega' \in (-\infty,\omega)$. Then the maximal mild solutions of~\eqref{eq:semilin-evol-eq-u} with $u \in \mathcal{U} := S^{\infty}(\R^+_0,U)$ 
generate a semiprocess family $(S_u)_{u\in \mathcal{U}}$, $S_{\{0\}}$ has a unique global  attractor $\Theta$, and $(S_u)_{u\in\mathcal{U}}$ is input-to-state practically stable w.r.t.~$\Theta$. 
\end{prop}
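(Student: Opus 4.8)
The plan is to verify the three assertions in turn: that the maximal mild solutions generate a semiprocess family, that the undisturbed semiprocess $S_{\{0\}}$ has a global attractor $\Theta$, and that input-to-state practical stability holds; the last point will then follow by a direct appeal to Proposition~\ref{prop:ISpS}, so the real work lies in the first two. The engine for everything is an a priori dissipation estimate of the form
\begin{align*}
\norm{x(t,0,x_0,v)} \le \e^{-\omega_0 t}\norm{x_0} + \gamma(\norm{u}_\infty) \qquad (t \in [0,\infty))
\end{align*}
for every $v \in \mathcal{V}(u)$ (in particular $v = u$), with some $\omega_0 \in (0,\omega-\omega')$ and a continuous monotonically increasing $\gamma$. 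I would obtain it from an energy argument: since $\norm{\e^{At}} \le \e^{-\omega t}$ on the Hilbert space $X$ forces the dissipativity $\scprd{Ax,x} \le -\omega\norm{x}^2$ for $x \in D(A)$, one formally computes $\tfrac12\tfrac{\mathrm d}{\mathrm dt}\norm{x}^2 \le -\omega\norm{x}^2 + C + \omega'\norm{x}^2 + \scprd{x,hu}$, absorbs $\scprd{x,hu}$ into the quadratic term by Young's inequality (this is where $\omega' < \omega$ is used), and integrates the resulting differential inequality $\tfrac{\mathrm d}{\mathrm dt}\norm{x}^2 \le -2\omega_0\norm{x}^2 + c_1 + c_2\norm{u}_\infty^2$ by Gr\"onwall, finishing with $\sqrt{a+b}\le\sqrt a+\sqrt b$. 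The one genuine subtlety — and the step I expect to cost the most effort — is that this computation is only licit for strong/classical solutions, whereas ours are merely mild (with $v \in L^2_{\mathrm{loc}}$ and $g$ only locally Lipschitz). I would therefore carry the estimate out on a regularized problem (Yosida approximation $A_\lambda$ of $A$ together with smooth $v_n \to v$ and $x_{0,n}\in D(A)$, $x_{0,n}\to x_0$), whose solutions are classical and satisfy the bound, and then pass to the limit via continuous dependence of mild solutions on the data.

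Granting this estimate, the maximal mild solutions are bounded on their interval of existence, so the boundedness-implies-globality part of Lemma~\ref{lm:max-mild-sol} gives $T_{s,x_s,v}=\infty$, and $S_u(t,s,x_s) := x(t,s,x_s,u)$ is well defined for all $t \ge s$. Conditions (i) and (ii) in the definition of a semiprocess family are then immediate: the first follows from the initial condition and uniqueness of mild solutions, while the translation identity $S_u(t+h,s+h,x) = S_{T(h)u}(t,s,x)$ follows from the time-invariance of $A$ and $g$ and the substitution $r\mapsto r-h$ in the variation-of-constants formula, the needed invariance $T(h)\mathcal U\subset\mathcal U$ being supplied by Lemma~\ref{lm:input-spaces}. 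This yields the semiprocess family $(S_u)_{u\in\mathcal U}$.

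For the undisturbed attractor I would apply Lemma~\ref{lm:existence-attractor} to the trivially compact and first-countable one-point input space $\mathcal V = \{0\}$. The bounded absorbing set $B_0 := \ol B_{1+\gamma(0)}(0)$ comes straight from the dissipation estimate at $u=0$, and the continuity hypothesis (iii) reduces to continuous dependence of $x\mapsto S_0(t_0,0,x)$ on the initial datum, which holds by local Lipschitzness of $g$ and Gr\"onwall. The compactness hypothesis (ii) is where compactness of the semigroup is essential: I would show that $S_0(1,0,\cdot)$ maps bounded sets to relatively compact sets by writing the Duhamel integral as $\e^{A\eps}\!\int_0^{1-\eps}\e^{A(1-\eps-r)}g(x(r))\,\mathrm dr$ plus a tail over $[1-\eps,1]$; the first term is relatively compact because $\e^{A\eps}$ is compact and the inner integral is bounded (the integrand $g(x(r))$ being bounded along absorbed trajectories), while the tail tends to $0$ uniformly as $\eps\searrow 0$, and $x_0\mapsto\e^{A}x_0$ is compact as well. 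Hence $K := \ol{S_0(1,0,B_0)}$ is compact and absorbing, and asymptotic compactness follows exactly as in the second step of the proof of Corollary~\ref{cor:wAG}. Lemma~\ref{lm:existence-attractor} then delivers the unique global attractor $\Theta$ of $S_{\{0\}}$.

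Finally, with the dissipation estimate furnishing assumption~(i) of Proposition~\ref{prop:ISpS} (take $\sigma(r):=r$ and $\gamma$ as above) and the attractor $\Theta$ furnishing assumption~(ii), Proposition~\ref{prop:ISpS} immediately yields input-to-state practical stability of $(S_u)_{u\in\mathcal U}$ with respect to $\Theta$, completing the proof.
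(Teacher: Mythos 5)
Your proposal is correct in outline and shares the paper's overall architecture (a priori dissipation estimate, global existence via Lemma~\ref{lm:max-mild-sol}, semiprocess structure from the variation-of-constants formula, attractor for $S_{\{0\}}$ from the abstract machinery, and then Proposition~\ref{prop:ISpS}), but it diverges from the paper's proof at the two technical pinch points. First, to justify the energy computation for mild solutions, you regularize the \emph{generator} (Yosida approximation $A_\lambda$, plus smoothed data) and pass to the limit; the paper instead regularizes only the \emph{data}: for $(x_0,u)\in D(A)\times S_c(\R^+_0,U)$ it shows the mild solution is piecewise classical (Theorem~6.1.6 of~\cite{Pazy}, applied on each interval of constancy of the step input), performs the differential inequality there, and then extends the estimate to all of $X\times\mathcal{U}$ by Gr\"onwall and the density of $S_c$ in $S^\infty$ w.r.t.\ $L^\infty_{\mathrm{loc}}$ --- this is precisely where the choice $\mathcal{U}=S^{\infty}(\R^+_0,U)$ (rather than all of $L^{\infty}$) earns its keep, a structural point your route does not use and in exchange would, if fully executed, require a Trotter--Kato-type argument: your phrase ``continuous dependence of mild solutions on the data'' glosses over the fact that the limit is taken in the generator, so you need $\e^{A_\lambda t}x\to\e^{At}x$ uniformly on compacts plus a Gr\"onwall comparison of the semilinear flows; this is standard (and your preliminary observation that $\norm{\e^{At}}\le\e^{-\omega t}$ yields $\scprd{Ax,x}\le-\omega\norm{x}^2$ on $D(A)$ in a Hilbert space is correct), but heavier than the paper's elementary density argument. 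Second, for asymptotic compactness of $S_{\{0\}}$ you prove directly that the time-one solution map sends bounded sets to relatively compact sets via the classical $\eps$-splitting $\e^{A\eps}\int_0^{1-\eps}\e^{A(1-\eps-r)}g(x(r))\,\mathrm{d}r$ plus a small tail, and then apply Lemma~\ref{lm:existence-attractor} with $\mathcal{V}=\{0\}$; the paper instead proves the weak-to-strong continuity statement ``$x_{0n}\rightharpoonup x_0$ implies $x(t_0,x_{0n},0)\to x(t_0,x_0,0)$'' (again via a $\delta$-splitting, Gr\"onwall, and compactness of $\e^{A\delta}$) and feeds it into Corollary~\ref{cor:wAG} with input space $\{0\}$. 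Both verifications are valid; yours is the more classical compact-semigroup argument, while the paper's buys reuse, since the same weak-to-strong continuity (now jointly in $(x_0,v)$) is exactly assumption~(ii) of Corollary~\ref{cor:wAG} needed again in Theorem~\ref{thm:spec-wAG-mild}.
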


\begin{proof}
We proceed in five steps in order to eventually verify the assumptions of Proposition~\ref{prop:ISpS}.
As a first step, we show that for every $(x_0,u) \in D(A)\times S_c(\R^+_0,U)$ the maximal mild solution $x(\cdot,x_0,u)$ (Lemma~\ref{lm:max-mild-sol}) is piecewise continuously differentiable and satisfies $x(t,x_0,u) \in D(A)$ for all $t \in [0,T_{x_0,u})$ and 
\begin{align}
\dot{x}(t,x_0,u) = Ax(t,x_0,u) + g(x(t,x_0,u)) + h u(t) \qquad (t \in (0,T_{x_0,u})\setminus N),
\end{align} 
where $N \subset \R^+_0$ denotes the finite 
set of jump points of $u$.
So, let $(x_0,u) \in D(A)\times S_c(\R^+_0,U)$. Also, let $t_1 < \dotsb < t_m$ be the jump points of $u$ in $(0,T_{x_0,u})$ 
and $t_0 := 0$ and let $t_{m+1} \in (t_m, T_{x_0,u})$ be arbitrary. Set $x_{t_i} := x(t_i,x_0,u)$ for $i \in \{0,\dots,m\}$. 
Since $x(\cdot,x_0,u)|_{[t_{i-1},t_i]}$ is a mild solution of the initial value problem
\begin{align*}
\dot{x}(t) = Ax(t) + g(x(t)) + hu(t) \qquad \text{and} \qquad x(t_{i-1}) = x_{t_{i-1}}
\end{align*}
and since $u|_{(t_{i-1},t_i)} \equiv u_i$ for some constant value $u_i \in U$, 
$x(\cdot,x_0,u)|_{[t_{i-1},t_i]}$ is also a mild solution of the initial value problem
\begin{align} \label{eq:spec-ISpS-mild,ivp-constant}
\dot{x}(t) = Ax(t) + g(x(t)) + h u_i \qquad \text{and} \qquad x(t_{i-1}) = x_{t_{i-1}}
\end{align}
and therefore is a restriction of the corresponding maximal mild solution $x(\cdot,t_{i-1},x_{t_{i-1}},u_i)$ (Lemma~\ref{lm:max-mild-sol}). In short,
\begin{align} \label{eq:spec-ISpS-mild,1}
x(\cdot,x_0,u)|_{[t_{i-1},t_i]} = x(\cdot,t_{i-1},x_{t_{i-1}},u_i)|_{[t_{i-1},t_i]}
\qquad (i \in \{1,\dots, m+1\}). 
\end{align}
Since now $x_{t_0} = x_0 \in D(A)$ and 
the mild solution $x(\cdot,s,x_s,v)$ is a classical solution whenever $x_s \in D(A)$ and $v \equiv v_0$ is constant 
(Theorem~6.1.6 of~\cite{Pazy}), we inductively conclude from~\eqref{eq:spec-ISpS-mild,1} that $x(\cdot,t_{i-1},x_{t_{i-1}},u_i)$ is a classical solution of~\eqref{eq:spec-ISpS-mild,ivp-constant} for every $i \in \{1,\dots, m+1\}$. Consequently, the assertions of the first step now follow in view of~\eqref{eq:spec-ISpS-mild,1} and the arbitrariness of $t_{m+1}$ in $(0,T_{x_0,u})$. 
\smallskip

As a second step, we show that the maximal mild solutions $x(\cdot,x_0,u)$ exist globally in time for $(x_0,u) \in D(A) \times S_c(\R^+_0,U)$ and that there exist a constant $\omega_0 \in (0,\infty)$ and continuous monotonically increasing functions $\sigma, \gamma: \R^+_0 \to \R^+_0$ such that
\begin{align} \label{eq:spec-ISpS-mild,step-2}
\norm{x(t,x_0,u)} \le \e^{-\omega_0 t} \sigma(\norm{x_0}) + \gamma(\norm{u}_{[0,t],\infty}) \qquad (t \in [0,\infty)) 
\end{align} 
for all $(x_0,u) \in D(A) \times S_c(\R^+_0,U)$. So, let $(x_0,u) \in D(A) \times S_c(\R^+_0,U)$ and write $x := x(\cdot,x_0,u)$. It then follows by the first step and our assumption~\eqref{eq:A-omega-dissipative-and-g-C-damping} that 
\begin{align}
\ddt \frac{\norm{x(t)}^2}{2} 
&\le - \omega \norm{x(t)}^2 + C + \omega' \norm{x(t)}^2 + \norm{x(t)}\norm{h} \norm{u(t)} \notag \\
&\le - 2 \omega_0 \frac{\norm{x(t)}^2}{2} + C + \frac{1}{\omega_0} \norm{h}^2 \norm{u(t)}^2 
\qquad (t \in (0,T_{x_0,u}) \setminus N),
\end{align}
where $\omega_0 := (\omega-\omega')/2$ and where in the second inequality we used that $\norm{x(t)}\norm{h} \norm{u(t)} \le \omega_0 \norm{x(t)}^2 + \norm{h}^2 \norm{u(t)}^2/\omega_0$. 
Since $t \mapsto \e^{\omega t} \norm{x(t)}^2$ is piecewise continuously  differentiable by the first step, we conclude 
\begin{align} \label{eq:spec-ISpS-mild,step-2,central-estimate}
\e^{2\omega_0 t} \frac{\norm{x(t)}^2}{2} - \frac{\norm{x_0}^2}{2} 
&\le \int_0^t \e^{2\omega_0 \tau} \Big( C + \frac{1}{\omega_0} \norm{h}^2 \norm{u(\tau)}^2 \Big) \d\tau \notag \\
&\le \frac{\e^{2\omega_0 t}}{2\omega_0} \Big( C + \frac{1}{\omega_0} \norm{h}^2 \norm{u}_{[0,t],\infty}^2 \Big)
\qquad (t \in [0,T_{x_0,u})).
\end{align}
We thus immediately obtain the estimate~\eqref{eq:spec-ISpS-mild,step-2} at least for all $t \in [0,T_{x_0,u})$ and $\sigma(r) := r$ and $\gamma(r) := C_{\omega_0,h} (1+r)$ with some constant $C_{\omega_0,h}$. In particular, this  implies~\eqref{eq:max-mild-sol-bd} and hence $T_{x_0,u} = \infty$ (Lemma~\ref{lm:max-mild-sol}) which, in turn, concludes the proof of the second step.  
\smallskip

As a third step, we show that the maximal mild solutions $x(\cdot,x_0,u)$ exist globally in time and that the estimate 
\begin{align} \label{eq:spec-ISpS-mild,step-3}
\norm{x(t,x_0,u)} \le \e^{-\omega_0 t} \sigma(\norm{x_0}) + \gamma(\norm{u}_{[0,t],\infty}) \qquad (t \in [0,\infty)) 
\end{align} 
holds true also for arbitrary $(x_0,u) \in X \times \mathcal{U}$. So, let $(x_0,u) \in X \times \mathcal{U}$ and choose a sequence $(x_{0n},u_n)$ in $D(A) \times S_c(\R^+_0,U)$ such that $x_{0n} \longrightarrow x_0$ in $X$ and $u_n \longrightarrow u$ in $L^{\infty}_{\mathrm{loc}}(\R^+_0,U)$. We then see by the second step that for every $t_0 \in (0,T_{x_0,u})$ 
\begin{align}
\rho_{t_0} := \sup_{n\in\N} \sup_{\tau \in [0,t_0]} \norm{x(\tau,x_{0n},u_n)} + \sup_{\tau \in [0,t_0]} \norm{x(\tau,x_0,u)} < \infty
\end{align}
and therefore, by the variations of constants formula and the Lipschitz continuity of $g$ on the bounded ball $\ol{B}_{\rho_{t_0}}(0)$, 
\begin{align*}
\norm{x(t,x_{0n},u_n)-x(t,x_0,u)} \le \norm{x_{0n}-x_0} &+ \norm{h} t_0 \norm{u_n-u}_{[0,t_0],\infty} \\
&+ \int_0^t L_{\rho_{t_0}} \norm{x(\tau,x_{0n},u_n)-x(\tau,x_0,u)} \d\tau 
\end{align*}
for all $t \in [0,t_0]$ and $n \in \N$. So, by Gr\"onwall's lemma and the arbitrariness of $t_0$ in $(0,T_{x_0,u})$, we conclude that 
\begin{align}
x(t,x_0,u) = \lim_{n\to\infty} x(t,x_{0n},u_n) \qquad (t \in [0,T_{x_0,u}))
\end{align}
and can thus extend the estimate of the second step -- at least for $t \in [0,T_{x_0,u})$ -- from $(x_{0n},u_n) \in D(A) \times S_c(\R^+_0,U)$ to 
$(x_0,u)$. In particular, 
this extended estimate implies~\eqref{eq:max-mild-sol-bd} and hence $T_{x_0,u} = \infty$ (Lemma~\ref{lm:max-mild-sol}) which, in turn, concludes the proof of the third step.  
\smallskip

As a fourth step, we show that whenever $x_{0n} \longrightarrow x_0$ weakly in $X$, then one has the strong convergence
\begin{align} \label{eq:spec-ISpS-mild-step-4}
x(t_0,x_{0n},0) \longrightarrow x(t_0,x_0,0) \qquad (n \to \infty)
\end{align}
\textcolor{black}{in our Hilbert state space $X$} for every $t_0 \in (0,\infty)$. So, let $x_{0n} \longrightarrow x_0$ weakly in $X$ and $t_0 \in (0,\infty)$. Set $$\rho_{t_0} := \sup_{n \in \N}  \sup_{\tau \in [0,t_0]} \norm{x(\tau,x_{0n},0)} + \sup_{\tau \in [0,t_0]} \norm{x(\tau,x_0,0)} < \infty,$$ which is finite by the third step, 
and let $\eps > 0$. Choose then $\delta \in (0,t_0)$ so small that
\begin{align}
2 L_{\rho_{t_0}} \rho_{t_0} \delta \le \eps / \exp(L_{\rho_{t_0}}  t_0).
\end{align}
We then get, by the variations of constants formula and the Lipschitz continuity of $g$ on the bounded ball $\ol{B}_{\rho_{t_0}}(0)$, that
\begin{align*}
&\norm{x(t,x_{0n},0)-x(t,x_0,0)} 
\le \norm{\e^{A\delta} (x_{0n}-x_0)} +  \int_0^t L_{\rho_{t_0}} \norm{x(\tau,x_{0n},0)-x(\tau,x_0,0)} \d\tau \notag \\
&\qquad \le \norm{\e^{A\delta} (x_{0n}-x_0)} + 2 L_{\rho_{t_0}} \rho_{t_0} \delta + \int_{\delta}^t L_{\rho_{t_0}} \norm{x(\tau,x_{0n},0)-x(\tau,x_0,0)} \d\tau 
\end{align*}
for all $t \in [\delta,t_0]$ and $n \in \N$. So, 
applying Gr\"onwall's lemma and then taking the limit superior, we get 
\begin{align}
\limsup_{n \to \infty} \norm{x(t_0,x_{0n},0)-x(t_0,x_0,0)} \le \lim_{n \to \infty} \norm{\e^{A\delta} (x_{0n}-x_0)} \exp(L_{\rho_{t_0}}  t_0) + \eps = \eps,
\end{align}
where in the last equality we used the compactness of $\e^{A\delta}$. Since $\eps > 0$ was arbitrary, this concludes the proof of the fourth step. 
\smallskip

As a fifth and last step, we 
show that the mild solutions of~\eqref{eq:semilin-evol-eq-u} generate a semiprocess family $(S_u)_{u\in\mathcal{U}}$ which satisfies the assumptions of Proposition~\ref{prop:ISpS}. 
Indeed, by the third step, the maximal mild solutions exist globally in time for all $(x_0,u) \in X \times \mathcal{U}$ and thus generate a semiprocess family $(S_u)_{u\in \mathcal{U}}$ via 
\begin{align*}
S_u(t,s,x_s) := x(t-s,x_s,u(\cdot+s)) \qquad (t \in [s,\infty)),
\end{align*}
see the remarks around~\eqref{eq:from-dyn-syst-with-inputs-to-semiproc-family}. 
Additionally, the third step gives that there exist a constant $\omega_0 \in (0,\infty)$ and continuous monotonically increasing functions $\sigma, \gamma: \R^+_0 \to \R^+_0$ such that the dissipation estimate~\eqref{eq:ISpS-assumption-1} is satisfied for all $(x_0,u) \in X \times \mathcal{U}$. In particular, the dissipation assumption~(i) of Corollary~\ref{cor:wAG} is satisfied with the input space $\mathcal{U}$ from that corollary being $\{0\}$.  
And finally, by the fourth step, the compactness assumption~(ii) of Corollary~\ref{cor:wAG} is satisfied with the input space $\mathcal{U}$ from that corollary being $\{0\}$. Consequently, Corollary~\ref{cor:wAG} with $\mathcal{U} := \{0\}$ gives that $S_{\{0\}}$ has a global attractor $\Theta$. 
So, summarizing, we now have that all the assumptions of Proposition~\ref{prop:ISpS} are satisfied which, in turn, yields the desired conclusion. 
\end{proof}

It should be noticed that if the constant $C$ from~\eqref{eq:A-omega-dissipative-and-g-C-damping} is $C=0$, then $\Theta = \{0\}$ and $(S_u)_{u\in\mathcal{U}}$ is even input-to-state stable -- instead of only input-to-state practically stable -- w.r.t.~$\Theta$. Indeed, this immediately follows by~\eqref{eq:spec-ISpS-mild,step-2,central-estimate}. 
\textcolor{black}{In the next theorem, we have to restrict the input space $\mathcal{U}$ by additionally requiring $r_0 < \infty$ (which was not necessary in the previous proposition).}

\begin{thm} \label{thm:spec-wAG-mild}
Suppose the assumptions of the previous proposition are satisfied. Then the maximal mild solutions of~\eqref{eq:semilin-evol-eq-u} with $u \in \mathcal{U} := \mathcal{U}_{1r_0} = S^{\infty}(\R^+_0,U) \cap \ol{B}_{r_0}^{L^{\infty}}(0)$ and $r_0 < \infty$  
generate a semiprocess family $(S_u)_{u\in \mathcal{U}}$, $S_{\{0\}}$ has a unique global  attractor $\Theta$, and $(S_u)_{u\in\mathcal{U}}$ is of asymptotic gain w.r.t.~$\Theta$. 
\end{thm}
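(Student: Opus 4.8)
The plan is to derive the theorem from Corollary~\ref{cor:wAG}. Its standing hypotheses are met: $U$ is separable and reflexive (in fact Hilbert), $X$ is reflexive, and $r_0 < \infty$ by assumption. The substantial work is to exhibit, for every $u \in \mathcal{U}$, the semiprocess family $(S_v)_{v \in \mathcal{V}(u)}$ generated by the mild solutions of~\eqref{eq:semilin-ivp-v} and to verify assumptions~(i) and~(ii) of that corollary. I would first construct the family: for $v \in \mathcal{V}(u) \subset L^2_{\mathrm{loc}}(\R^+_0,U)$, Lemma~\ref{lm:max-mild-sol} provides a unique maximal mild solution, global existence will follow from the dissipation estimate below, and the semiprocess-family axioms follow from the cocycle structure of the variation-of-constants formula together with $T(h)\mathcal{V}(u) \subset \mathcal{V}(u)$ (Lemma~\ref{lm:V(u)}).

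Next I would verify assumption~(i), the dissipation estimate. Here I would redo the energy computation from steps~1--3 of the proof of Proposition~\ref{prop:spec-ISpS-mild}, but now for the $L^2_{\mathrm{loc}}$-inputs $v \in \mathcal{V}(u)$. The only change is that the weighted integral $\int_0^t \e^{2\omega_0\tau}\norm{v(\tau)}_U^2 \d\tau$ is no longer controlled through an $L^\infty$-bound but through the translation bound $\int_s^t \norm{v(\tau)}_U^2\d\tau \le (t-s)\norm{u}_\infty^2$ of Lemma~\ref{lm:V(u)}; summing over the unit intervals $[k,k+1]$ gives $\int_0^t \e^{2\omega_0\tau}\norm{v(\tau)}_U^2 \d\tau \le C_{\omega_0}\,\e^{2\omega_0 t}\norm{u}_\infty^2$. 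With $\omega_0 := (\omega-\omega')/2$ this yields $\norm{S_v(t,0,x_0)} \le \e^{-\omega_0 t}\sigma(\norm{x_0}) + \gamma(\norm{u}_\infty)$ with $\sigma(r):=r$ and a continuous increasing $\gamma$ whose value at $0$ need not vanish (which is exactly what permits a non-singleton attractor). The regularity required for the energy identity is obtained, precisely as in Proposition~\ref{prop:spec-ISpS-mild}, by first working with $x_0 \in D(A)$ and regular inputs and then passing to general data by a Gr\"onwall/density argument; the resulting boundedness also gives $T_{s,x_s,v} = \infty$ via Lemma~\ref{lm:max-mild-sol}.

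The main obstacle is assumption~(ii), the weak-to-strong continuity of the solution map. Suppose $x_n \longrightarrow x$ weakly in $X$ and $v_n \longrightarrow v$ weakly in $L^2_{\mathrm{loc}}(\R^+_0,U)$ with $v_n \in \mathcal{V}(u_n)$, $u_n \in \mathcal{U}$; since $\norm{u_n}_\infty \le r_0 < \infty$ (this is precisely where $r_0 < \infty$ enters), assumption~(i) gives a uniform bound $\sup_n \sup_{\tau\in[0,t_0]}\norm{S_{v_n}(\tau,0,x_n)} \le \rho$. Writing the trajectories through the variation-of-constants formula, I would pass to the limit termwise: the free term $\e^{At}x_n \longrightarrow \e^{At}x$ strongly for $t>0$ because $\e^{At}$ is compact and hence carries weakly convergent sequences to norm-convergent ones; the input term $\int_0^t \e^{A(t-r)}hv_n(r)\d r \longrightarrow \int_0^t \e^{A(t-r)}hv(r)\d r$ strongly because the operator $L^2([0,t],U) \ni w \mapsto \int_0^t \e^{A(t-r)}hw(r)\d r$ is compact (being a norm limit of the compact operators obtained by cutting off the singularity at $r=t$ and factoring through $\e^{A\eps}$), so it too turns weak convergence into strong convergence; and the nonlinear term is handled via the Lipschitz bound on $\ol{B}_\rho(0)$. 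To close the argument I would combine these convergences with Gr\"onwall's lemma and the $\delta$-splitting trick of step~4 of Proposition~\ref{prop:spec-ISpS-mild} — splitting off a short initial interval $[0,\delta]$ on which the non-small term $\e^{A\cdot}(x_n-x)$ contributes only $O(\delta)$, and using compactness of $\e^{A\delta}$ to make the contribution on $[\delta,t_0]$ uniformly small — to deduce $S_{v_n}(t_0,0,x_n) \longrightarrow S_v(t_0,0,x)$. (Alternatively, the compact semigroup makes the trajectories relatively compact and equicontinuous on $(0,t_0]$, so a subsequence converges in $C([\eta,t_0],X)$; passing to the limit in the mild equation together with uniqueness from Lemma~\ref{lm:max-mild-sol} identifies the limit, whence the whole sequence converges.)

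With assumptions~(i) and~(ii) established, Corollary~\ref{cor:wAG} immediately yields that $S_{\{0\}}$ has a unique global attractor $\Theta$ — necessarily the one already produced in Proposition~\ref{prop:spec-ISpS-mild} — and that $(S_u)_{u\in\mathcal{U}}$ is of asymptotic gain w.r.t.~$\Theta$, as claimed.
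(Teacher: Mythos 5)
Your proposal is correct and reduces the theorem to Corollary~\ref{cor:wAG} exactly as the paper does, but it takes a genuinely different route at the two technical pillars. For the dissipation estimate over $\mathcal{V}(u)$, you redo the energy computation directly for $L^2_{\mathrm{loc}}$-inputs, controlling the weighted integral $\int_0^t \e^{2\omega_0\tau}\norm{v(\tau)}_U^2\d\tau$ by summing the translation bound~\eqref{eq:estimate-for-transl-bounded-norm} over unit intervals; the paper instead never does an energy estimate for general $v \in \mathcal{V}(u)$ -- it takes a sequence of translates $v_n = u(\cdot+h_n) \in \mathcal{U}$ converging weakly to $v$ (available by the metrizability in Lemma~\ref{lm:V(u)}), applies the already-proven $L^\infty$-based estimate~\eqref{eq:spec-ISpS-mild,step-3} to each $v_n$, and passes to the limit using the weak-to-strong continuity of its second step. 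Your route is the one the paper itself uses in the weak-solution setting (Corollary~\ref{cor:wAG+ISpS,weak}, via~(2.28) of~\cite{ChVi96}); it is more self-contained but requires an extra density argument (approximating $v$ by step functions in $L^2_{\mathrm{loc}}$-norm to justify the energy identity, with the weighted integral kept intact under the limit), which you correctly anticipate. The paper's route is shorter because it recycles the continuity step, but makes the logical order matter: continuity first, dissipation second -- whereas you can do them in either order.

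For the weak-to-strong continuity itself, your compact-operator argument (the input map $w \mapsto \int_0^t \e^{A(t-r)}hw(r)\d r$ is a norm limit of operators factoring through $\e^{A\eps}$, hence compact, hence weak-to-strong) is a clean substitute for the paper's first step, which instead proves by a duality computation that $y_n(t_n) \longrightarrow 0$ \emph{weakly} along varying times $t_n \to t$ and only upgrades to strong convergence inside the Gr\"onwall argument via $\e^{A\delta}$. One detail to be careful about: after the $\delta$-splitting, Gr\"onwall requires the \emph{uniform} smallness $\sup_{t\in[\delta,t_0]}\norm{\e^{A\delta}y_n(t-\delta)} \to 0$, and your compactness argument as stated gives strong convergence only pointwise in $t$. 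This is precisely why the paper formulates its first step with varying times $t_n$: the uniform statement then follows by a contradiction/subsequence argument. Your parenthetical alternative (relative compactness and equicontinuity of the trajectories on $[\eta,t_0]$, identification of the limit via the mild equation and uniqueness) closes this gap, so the proof goes through, but the main line of your argument should either quantify the convergence uniformly in $t$ or invoke that alternative explicitly. Everything else -- the role of $r_0 < \infty$ in extracting the uniform trajectory bound from $\norm{u_n}_\infty \le r_0$, the construction of the semiprocess families from the cocycle structure and $T(h)\mathcal{V}(u) \subset \mathcal{V}(u)$, and the final appeal to Corollary~\ref{cor:wAG} -- matches the paper.
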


\begin{proof}
We show in three steps that the maximal mild solutions of~\eqref{eq:semilin-ivp-v} generate semiprocess families $(S_v)_{v\in\mathcal{V}(u)}$ for $u \in \mathcal{U}$ that satisfy the assumptions of Corollary~\ref{cor:wAG} which, in turn, yields the desired conclusion. As usual, $\mathcal{V}(u)$ and $T$ are as in~\eqref{eq:V(u)-def} and~\eqref{eq:transl-sgr} respectively. 
%

As a first 
step, we show that whenever $t_n \longrightarrow t$ in $\R^+_0$ and $v_n \longrightarrow v$ weakly in $L^2_{\mathrm{loc}}(\R^+_0,U)$, then $y_n(t_n) \longrightarrow 0$ weakly in $X$, where 
\begin{align} \label{eq:def-y_n(s)}
y_n(s) := \int_0^s \e^{A(s-\tau)} h \big(v_n(\tau)-v(\tau) \big) \d\tau 
\qquad (s \in \R^+_0).
\end{align}
So, let $t_n \longrightarrow t$ in $\R^+_0$ and $v_n \longrightarrow v$ weakly in $L^2_{\mathrm{loc}}(\R^+_0,U)$ and let $z \in X$. We then have
\begin{align} \label{eq:spec-wAG-mild,step-1,1}
\scprd{z,y_n(t_n)} = \int_0^{t_n} \scprd{w(t_n-\tau),v_n(\tau)}_U \d\tau - \int_0^{t_n} \scprd{w(t_n-\tau),v(\tau)}_U \d\tau,
\end{align}
where $w(\tau) := h^* (\e^{A\tau})^* z = h^* \e^{A^*\tau} z$ for $\tau \in \R^+_0$. 
Since $w$ is continuous (Proposition~I.5.14 of~\cite{EnNa} or Proposition~2.8.5 of~\cite{TuWe}), the extension 
$\tilde{w}: \R \to U$ defined by $\tilde{w}|_{(-\infty,0)} := w(0)$ and $\tilde{w}|_{[0,\infty)} := w$ is uniformly continuous on compact subintervals of $\R$ 
and therefore we have for both $v_* = v_n$ and $v_* = v$ that
\begin{align} \label{eq:spec-wAG-mild,step-1,2}
&\norm{ \int_0^{t_n} \scprd{w(t_n-\tau),v_*(\tau)}_U \d\tau - \int_0^{t} \scprd{w(t-\tau),v_*(\tau)}_U \d\tau } \\
&\qquad \le 
\norm{ \int_0^t \scprd{\tilde{w}(t_n-\tau)-\tilde{w}(t-\tau),v_*(\tau)}_U \d\tau } + \norm{ \int_t^{t_n}\scprd{\tilde{w}(t_n-\tau),v_*(\tau)}_U \d\tau } \notag \\
&\qquad \le 
\norm{\tilde{w}(t_n-\cdot)-\tilde{w}(t-\cdot)}_{[0,t],\infty} t^{1/2} C_t + \norm{\tilde{w}(t_n-\cdot)}_{[0,t],\infty}  |t_n-t|^{1/2} C_t \longrightarrow 0 \notag
\end{align}
as $n \to \infty$, where $C_t := \sup_{n\in \N} \norm{v_n}_{[0,t],2}$ is finite by the weak convergence of $(v_n)$. 
Also, by the weak convergence of $(v_n)$ to $v$, we have that
\begin{align} \label{eq:spec-wAG-mild,step-1,3}
\int_0^t \scprd{w(t-\tau),v_n(\tau)}_U \d\tau - \int_0^t \scprd{w(t-\tau),v(\tau)}_U \d\tau
\longrightarrow 0 
\end{align}
as $n \to \infty$. Combining now~\eqref{eq:spec-wAG-mild,step-1,1} with~\eqref{eq:spec-wAG-mild,step-1,2} and~\eqref{eq:spec-wAG-mild,step-1,3}, we finally obtain $\scprd{z,y_n(t_n)} \longrightarrow 0$, as desired. 
\smallskip

As a second 
step, we show that whenever $x_{0n} \longrightarrow x_0$ weakly in $X$ and $v_n \longrightarrow v$ weakly in $L^2_{\mathrm{loc}}(\R^+_0,U)$ for some $x_{0n},x_0 \in X$ and $v_n \in \mathcal{V}(u_n), v \in \mathcal{V}(u)$ and $u_n,u \in \mathcal{U}$ with
\begin{align} \label{eq:spec-wAG-mild,step-2,1}
\sup_{n\in \N} \sup_{t \in [0,T_{x_{0n},v_n})} \norm{x(t,x_{0n},v_n)} < \infty,
\end{align}
then one has the strong convergence
\begin{align} \label{eq:spec-wAG-mild,step-2,2}
x(t_0,x_{0n},v_n) \longrightarrow x(t_0,x_0,v) \qquad (n \to \infty)
\end{align}
for every $t_0 \in (0,T_{x_0,v})$. 
So, let $x_n \longrightarrow x$ weakly in $X$ and $v_n \longrightarrow v$ weakly in $L^2_{\mathrm{loc}}(\R^+_0,U)$ for some $x_n,x \in X$ and $v_n \in \mathcal{V}(u_n), v \in \mathcal{V}(u)$ and $u_n,u \in \mathcal{U}$ such that~\eqref{eq:spec-wAG-mild,step-2,1} is satisfied and let $t_0 \in (0,T_{x_0,v})$. In view of~\eqref{eq:spec-wAG-mild,step-2,1}, we have $T_{x_{0n},v_n} = \infty$ for every $n \in \N$ by Lemma~\ref{lm:max-mild-sol} and thus, in particular, $[0,t_0] \subset [0,T_{x_{0n},v_n})$.  
Set $$\rho_{t_0} := \sup_{n \in \N}  \sup_{\tau \in [0,t_0]} \norm{x(\tau,x_{0n},v_n)} + \sup_{\tau \in [0,t_0]} \norm{x(\tau,x_0,v)} < \infty,$$ which is finite by assumption~\eqref{eq:spec-wAG-mild,step-2,1}, 
and let $\eps > 0$. Choose then $\delta \in (0,t_0)$ so small that
\begin{align}
2 \norm{h} C_{t_0} \delta^{1/2} + 2 L_{\rho_{t_0}} \rho_{t_0} \delta \le \eps / \exp(L_{\rho_{t_0}}  t_0),
\end{align}
where $C_{t_0} := \sup_{n\in \N} \norm{v_n}_{[0,t_0],2} < \infty$. 
We then get, by the variations of constants formula and the Lipschitz continuity of $g$ on the bounded ball $\ol{B}_{\rho_{t_0}}(0)$, that
\begin{align*}
&\norm{x(t,x_{0n},v_n)-x(t,x_0,v)} 
\le \norm{\e^{A\delta} (x_{0n}-x_0)} +  \norm{\e^{A\delta} y_n(t-\delta) } + 2 \norm{h} C_{t_0} \delta^{1/2} \\
&\qquad \qquad  + 2 L_{\rho_{t_0}} \rho_{t_0} \delta + \int_{\delta}^t L_{\rho_{t_0}} \norm{x(\tau,x_{0n},v_n)-x(\tau,x_0,v)} \d\tau 
\end{align*}
for all $t \in [\delta,t_0]$ and $n \in \N$, where $y_n$ is defined as in~\eqref{eq:def-y_n(s)}. 
So, 
applying Gr\"onwall's lemma and then taking the limit superior, we get 
\begin{align}
\limsup_{n \to \infty} \norm{x(t_0,x_{0n},v_n)-x(t_0,x_0,v)} 
&\le 
\lim_{n \to \infty} \bigg( \norm{\e^{A\delta} (x_{0n}-x_0)} + \sup_{t \in [\delta,t_0]} \norm{\e^{A\delta} y_n(t-\delta)} \bigg) \cdot \notag \\
&\qquad \cdot \exp(L_{\rho_{t_0}}  t_0) + \eps = \eps,
\end{align}
where in the last equality we used the compactness of $\e^{A\delta}$ and the first step. Since $\eps > 0$ was arbitrary, this concludes the proof of the second step. 
\smallskip

As a third and last step, we show that the maximal mild solutions of~\eqref{eq:semilin-ivp-v} generate a semiprocess family $(S_v)_{v\in\mathcal{V}(u)}$ for every $u \in \mathcal{U}$ which satisfies the assumptions of Corollary~\ref{cor:wAG}.
We already know from the proof of Propostion~\ref{prop:spec-ISpS-mild} (third step) that the maximal mild solutions $x(\cdot,x_0,u)$ exist globally in time and satisfy the dissipation estimate~\eqref{eq:spec-ISpS-mild,step-3} for all $(x_0,u) \in X \times \mathcal{U}$. Suppose now $(x_0,v) \in X \times \mathcal{V}(u)$ with $u \in \mathcal{U}$ and let $(v_n) = (u(\cdot+h_n))$ be a sequence with $v_n \longrightarrow v$ weakly in $L^2_{\mathrm{loc}}(\R^+_0,U)$. Then $v_n \in \mathcal{U}$ and therefore 
by~\eqref{eq:spec-ISpS-mild,step-3}
\begin{align*}
\norm{x(t,x_0,v_n)} \le \e^{-\omega_0 t} \sigma(\norm{x_0}) + \gamma(\norm{v_n}_{\infty}) 
\le \e^{-\omega_0 t} \sigma(\norm{x_0}) + \gamma(\norm{u}_{\infty}) \qquad (t \in [0,\infty)).
\end{align*}
So, by the second step, we have the strong convergence
\begin{align*}
x(t,x_0,v_n) \longrightarrow x(t,x_0,v) \qquad (n \to \infty)
\end{align*}
for every $t \in (0,T_{x_0,v})$. Combining now the last two relations, we see that
\begin{align} \label{eq:spec-wAG-mild-step-3,1}
\norm{x(t,x_0,v)} \le \e^{-\omega_0 t} \sigma(\norm{x_0}) + \gamma(\norm{u}_{\infty}) \qquad (t \in [0,T_{x_0,v}))
\end{align}
for every $(x_0,v) \in X \times \mathcal{V}(u)$ and $u \in \mathcal{U}$. 
In particular, $T_{x_0,v} = \infty$ (Lemma~\ref{lm:max-mild-sol}). 
We can now define 
\begin{align*}
S_v(t,s,x_s) := x(t-s,x_s,v(\cdot+s)) \qquad (t \in [s,\infty))
\end{align*}
for $(s,x_s,v) \in \R^+_0\times X \times \mathcal{V}(u)$ and $u \in \mathcal{U}$. It is easy to see that $(S_v)_{v\in\mathcal{V}(u)}$ is a semiprocess family, see the remarks around~\eqref{eq:from-dyn-syst-with-inputs-to-semiproc-family}. Additionally, by virtue of~\eqref{eq:spec-wAG-mild-step-3,1}, the dissipation assumption~(i) of Corollary~\ref{cor:wAG} is satisfied. And finally, by virtue of the second step combined with~\eqref{eq:spec-wAG-mild-step-3,1}, 
the compactness assumption~(ii) of Corollary~\ref{cor:wAG} is satisfied as well (note that $\norm{u_n}_{\infty} \le r_0$ for all $u_n \in \mathcal{U}_{1r_0} = \mathcal{U}$). 
\end{proof}

We now specialize from~\eqref{eq:semilin-evol-eq-u} to reaction-diffusion equations~\eqref{eq:react-diffus-eq} and, for that purpose, impose the following assumptions on the nonlinearity $g$ and the inhomogeneity $h$.

\begin{cond} \label{cond:g,h,mild}
\begin{itemize}
\item[(i)] $\Omega$ is a bounded domain in $\R^d$ for some $d \in \N$ 
\item[(ii)] $g: X \to X$ is a function of the form
\begin{align*}
g(y) = \alpha(\norm{y})y \quad (y \in X) \qquad \text{and} \qquad h \in X,
\end{align*}
where $X := L^2(\Omega,\R)$ and $\alpha: \R^+_0 \to \R$ is a locally Lipschitz continuous function 
which, eventually, is smaller than the smallest eigenvalue $\lambda$ of the negative Dirichlet Laplacian on $\Omega$, that is, there exists an $r_0 \in (0,\infty)$ such that
\begin{align} \label{eq:alpha-less-lambda}
\sup_{r \in [r_0,\infty)} \alpha(r) < \lambda.
\end{align}
\end{itemize}
\end{cond}

Suppose that Condition~\ref{cond:g,h,mild} is satisfied and let $s \in \R^+_0$, $y_s \in X$ and $v \in L^2_{\mathrm{loc}}(\R^+_0,\R)$. A function $y \in C([s,T),X)$ is called a \emph{maximal mild solution} of the initial boundary value problem~\eqref{eq:ibvp-v} iff it is a maximal mild solution 
of the corresponding abstract initial value problem~\eqref{eq:semilin-ivp-v} with $A: D(A) \subset X \to X$ being the Dirichlet Laplacian on $\Omega$, that is, 
\begin{align} \label{eq:Dirichlet-Laplacian}
D(A) = \{y \in H^1_0(\Omega): \Delta y \in L^2(\Omega) \}
\qquad \text{and} \qquad 
Ay = \Delta y
\end{align}
(Section~3.6 of~\cite{TuWe}) and with $x_s = y_s$, of course. In these relations, $\Delta y$ is to be understood in the distributional sense. Also, if the boundary $\partial \Omega$ is sufficiently smooth, then 
$D(A) = H^1_0(\Omega) \cap H^2(\Omega)$ (Theorem~3.6.2 of~\cite{TuWe}).

\begin{cor} \label{cor:wAG+ISpS,mild}
Suppose that Condition~\ref{cond:g,h,mild} is satisfied. Then
\begin{itemize}
\item[(i)] the maximal mild solutions of~\eqref{eq:react-diffus-eq} generate a semiprocess family $(S_u)_{u\in \mathcal{U}}$ on $X$ with input space $\mathcal{U} := \mathcal{U}_{1 r_0} = S^{\infty}(\R^+_0,\R) \cap \ol{B}_{r_0}^{L^{\infty}}(0)$ and $r_0 < \infty$
\item[(ii)] $S_{\{0\}}$ has a unique global attractor $\Theta$ and $(S_u)_{u\in \mathcal{U}}$ is of asymptotic gain and input-to-state practically stable w.r.t.~$\Theta$. 
\end{itemize}
\end{cor}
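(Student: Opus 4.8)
The plan is to reduce this corollary to the abstract results already obtained for semilinear equations, namely Proposition~\ref{prop:spec-ISpS-mild} (which yields the semiprocess family, the unique global attractor, and input-to-state practical stability) and Theorem~\ref{thm:spec-wAG-mild} (which yields the asymptotic gain). Both of these rest on the hypotheses collected in~\eqref{eq:A-omega-dissipative-and-g-C-damping}, so the entire task is to verify those hypotheses for the concrete data $A = \Delta$, $g(y) = \alpha(\norm{y})y$, $h \in X = L^2(\Omega)$, $U = \R$ of the reaction-diffusion equation~\eqref{eq:react-diffus-eq}.

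First I would treat the operator $A = \Delta$, the Dirichlet Laplacian on the bounded domain $\Omega$. It is self-adjoint on $X = L^2(\Omega)$ and, since $\Omega$ is bounded, has compact resolvent by the Rellich embedding $H^1_0(\Omega) \hookrightarrow L^2(\Omega)$; consequently it generates an analytic semigroup which is immediately compact, so that the compact-semigroup requirement of Proposition~\ref{prop:spec-ISpS-mild} is met. By the spectral theorem together with the fact that the spectrum of $-\Delta$ is contained in $[\lambda,\infty)$, where $\lambda > 0$ is the smallest Dirichlet eigenvalue, one obtains $\norm{\e^{\Delta t}} \le \e^{-\lambda t}$, which is the first estimate in~\eqref{eq:A-omega-dissipative-and-g-C-damping} with $\omega := \lambda$. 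Moreover, $h \in L^2(\Omega)$ defines the bounded operator $\R \ni u \mapsto hu \in X$, so $h \in L(U,X)$ with $U = \R$ a separable Hilbert space, as required.

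Next I would check the two properties of $g(y) = \alpha(\norm{y})y$. For the local Lipschitz continuity on bounded sets I would split, for $y_1,y_2 \in \ol{B}_R(0)$,
\begin{align*}
\norm{g(y_1)-g(y_2)} \le |\alpha(\norm{y_1})| \, \norm{y_1 - y_2} + |\alpha(\norm{y_1}) - \alpha(\norm{y_2})| \, \norm{y_2},
\end{align*}
and bound the first term by $\sup_{[0,R]}|\alpha|$ and the second by the local Lipschitz constant of $\alpha$ together with $|\,\norm{y_1}-\norm{y_2}\,| \le \norm{y_1-y_2}$, which gives a Lipschitz constant on $\ol{B}_R(0)$. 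For the dissipativity estimate I would compute $\scprd{y,g(y)} = \alpha(\norm{y})\norm{y}^2$ and choose $\omega' := \sup_{r \ge r_0} \alpha(r)$, which is $< \lambda$ by~\eqref{eq:alpha-less-lambda}. For $\norm{y} \ge r_0$ this directly gives $\scprd{y,g(y)} \le \omega' \norm{y}^2$, while for $\norm{y} < r_0$ the quantity $\alpha(\norm{y})\norm{y}^2 - \omega'\norm{y}^2$ is bounded by a constant $C$ because $\alpha$ is continuous, hence bounded, on $[0,r_0]$; together this yields $\scprd{y,g(y)} \le C + \omega'\norm{y}^2$ with $\omega' < \omega$, which is the second estimate in~\eqref{eq:A-omega-dissipative-and-g-C-damping}.

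With all hypotheses verified, I would simply invoke Proposition~\ref{prop:spec-ISpS-mild} to obtain part~(i) together with the unique global attractor $\Theta$ of $S_{\{0\}}$ and the input-to-state practical stability, and Theorem~\ref{thm:spec-wAG-mild} (using the finite bound $r_0 < \infty$, and restricting the input class accordingly) to obtain the asymptotic gain in part~(ii). The point requiring the most care is the dissipativity step: since hypothesis~\eqref{eq:alpha-less-lambda} controls $\alpha$ only for large arguments, one must treat the region $\norm{y} < r_0$ separately and absorb it into the constant $C$ --- this is exactly what turns a merely \emph{eventual} subcriticality of $\alpha$ into the genuine dissipation inequality demanded by the abstract theory. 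The semigroup facts of the first step, while standard elliptic theory, are what make the compactness hypothesis~(ii) of Corollary~\ref{cor:wAG} available here.
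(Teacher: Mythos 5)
Your proposal is correct and follows essentially the same route as the paper's own proof: both reduce the corollary to Proposition~\ref{prop:spec-ISpS-mild} and Theorem~\ref{thm:spec-wAG-mild} by verifying that the Dirichlet Laplacian generates a compact semigroup with $\norm{\e^{At}} \le \e^{-\omega t}$ for $\omega$ the smallest Dirichlet eigenvalue, that $g(y) = \alpha(\norm{y})y$ is Lipschitz on bounded subsets of $X$, and that $\scprd{y,g(y)} \le C + \omega' \norm{y}^2$ with $\omega' := \sup_{r \in [r_0,\infty)} \alpha(r) < \omega$, where the region $\norm{y} < r_0$ is absorbed into the constant $C$ exactly as in the paper's third step. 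The only cosmetic difference is that the paper identifies $\lambda$ with $\inf \sigma(-A)$ via $\sigma(-A) = \sigma_p(-A)$ (compact resolvent), whereas you invoke the spectral theorem directly; these are interchangeable.
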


\begin{proof}
We show that the assumptions of Proposition~\ref{prop:spec-ISpS-mild} and Theorem~\ref{thm:spec-wAG-mild} are satisfied with $A$ being the Dirichlet Laplacian from~\eqref{eq:Dirichlet-Laplacian} and with
\begin{align} \label{eq:omega-smallest-ev-of-neg-Dirichlet-Laplacian}
\omega := \inf \sigma(-A).
\end{align}

We will proceed in three steps. 
As a first step, we observe that the Dirichlet Laplacian $A$ on $\Omega$ is the generator of a compact semigroup on $X$ and that
\begin{align} \label{eq:heat-sgr-exp-stable}
\norm{\e^{At}} \le \e^{-\omega t} \qquad (t \in \R^+_0),
\end{align}
where $\omega \in (0,\infty)$ is as in~\eqref{eq:omega-smallest-ev-of-neg-Dirichlet-Laplacian}. 
Indeed, it is well-known that $-A$ is self-adjoint and strictly positive (Propostion~3.6.1 of~\cite{TuWe}) and therefore $\omega = \inf \sigma(-A) > 0$. Consequently, $A$ is the generator of an analytic semigroup satisfying~\eqref{eq:heat-sgr-exp-stable} (Example~3.7.5 of~\cite{ArBaHiNe}). Since analytic semigroups are norm-continuous on $(0,\infty)$ and since $A$ has compact resolvent by Propostion~II.4.25 of~\cite{EnNa} and by the compactness of the embedding $H^1_0(\Omega) \subset L^2(\Omega)$, 
it further follows by Theorem~II.4.29 of~\cite{EnNa} that $e^{A \cdot}$ is a compact semigroup, as desired.
\smallskip

As a second step, we show that $g: X \to X$ is Lipschitz continuous on bounded subsets of $X$.
Indeed, let $\rho > 0$ and let $l_{\rho}$ be a Lipschitz constant of $\alpha|_{[0,\rho]}$. We then have, for all $y,z \in \ol{B}_{\rho}(0)$, that
\begin{align*}
\norm{g(y)-g(z)} \le \big| \alpha(\norm{y})-\alpha(\norm{z}) \big| \norm{y} + \alpha(\norm{z})\norm{y-z}
\le \big( l_{\rho} \rho + \norm{\alpha}_{[0,\rho],\infty} \big) \norm{y-z},
\end{align*}
as desired, where $\norm{\alpha}_{[0,\rho],\infty} := \sup_{r\in [0,\rho]} |\alpha(r)| < \infty$.  
\smallskip

As a third and last step, we show that there exist constants $C \in \R$ and $\omega' \in (-\infty,\omega)$ such that 
\begin{align} \label{eq:estimate-for-yg(y)}
%
\scprd{y,g(y)} \le C + \omega' \norm{y}^2 \qquad (y \in X),
\end{align}
where $\omega$ is as in~\eqref{eq:omega-smallest-ev-of-neg-Dirichlet-Laplacian}. 
Indeed, choose an $r_0 \in (0,\infty)$ such that 
\eqref{eq:alpha-less-lambda} is satisfied with $\lambda := \min \sigma_p(-A)$ and define 
\begin{align*}
\omega' := \sup_{r\in[r_0,\infty)} \alpha(r)
\qquad \text{and} \qquad
C := \sup_{r \in [0,r_0]} |\alpha(r)|r^2.
\end{align*}
Since $A$ has compact resolvent by the first step, we have $\sigma(-A) = \sigma_p(-A)$ (Corollary~IV.1.19 of~\cite{EnNa}) and therefore $\omega' \in (-\infty,\omega)$ by~\eqref{eq:alpha-less-lambda} and, of course, we also have $C \in \R$. 
It is now straightforward to conclude the desired estimate~\eqref{eq:estimate-for-yg(y)}.
\end{proof}

\subsection{An application in the case of weak solvability}

In this section, we establish an asymptotic gain and an input-to-state practical stability result for the reaction-diffusion equation~\eqref{eq:react-diffus-eq}, taking a weak-solution approach~\cite{VaKa06} and taking 
\begin{align}
\mathcal{U} := \mathcal{U}_{2 r_0} = L^{\infty}(\R^+_0,\R) \cap \ol{B}_{r_0}^{L^{\infty}}(0).
\end{align}
Accordingly, in contrast to the previous section, 
we do not have to require the nonlinearity $g$ to be locally Lipschitz continuous -- as a function from $X$ to $X$ -- any more. Instead, it is sufficient to impose the following assumptions on the nonlinearity $g$ (as a function from $\R$ to $\R$) and the inhomogeneity $h$. 
Simple examples for functions $g$ satisfying the three inequalities in~\eqref{eq:cond-g} \textcolor{black}{with an appropriate $p \in [2,\infty)$}  are given by the nonlinearity from the Chaffee--Infante equation (Section~11.5 of~\cite{Robinson}), that is,
\begin{align*}
g(r) = -r^3 + r \qquad (r\in \R)
\end{align*} 
or, more generally, by any polynomial of odd degree with negative leading coefficient.
\textcolor{black}{(Indeed, for these kinds of nonlinearities $g$, the inequalities in~\eqref{eq:cond-g} are satisfied with $p := 2m$, where $2m-1$ is the odd degree of the polynomial $g$.)  
}

\begin{cond} \label{cond:g,h,weak}
\begin{itemize}
\item[(i)] $\Omega$ is a bounded domain in $\R^d$ for some $d \in \N$ with smooth boundary $\partial \Omega$ 
\item[(ii)] $g \in C^1(\R,\R)$ and there exist constants $\alpha_1, \alpha_2, \kappa, \lambda \in (0,\infty)$ such that
\begin{align} \label{eq:cond-g}
-\kappa - \alpha_1 |r|^p \le g(r)r \le \kappa - \alpha_2 |r|^p
\qquad \text{and} \qquad
g'(r) \le \lambda
\qquad (r \in \R)
\end{align}
\textcolor{black}{for some $p \in [2,\infty)$} 
and, moreover, $h \in X := L^2(\Omega,\R)$. 
\end{itemize}
\end{cond}

Suppose that Condition~\ref{cond:g,h,weak} is satisfied \textcolor{black}{for some $p \in [2,\infty)$} and let $s \in \R^+_0$ and $(y_s,v) \in X \times L^2_{\mathrm{loc}}(\R^+_0,\R)$. A function $y \in C([s,\infty),X)$ is 
called a \emph{global weak solution of 
\eqref{eq:ibvp-v}} iff $y(s) = y_s$ and for every $T \in (s,\infty)$ one has
\begin{align}
y|_{[s,T]} \in L^2([s,T],H^1_0(\Omega)) \cap L^p([s,T], L^p(\Omega))
\end{align}
and there exists a (then unique) $z \in L^2([s,T],H^1_0(\Omega)^*) + L^q([s,T], L^q(\Omega))$ such that
\begin{align} \label{eq:def-weak-sol}
\int_s^T \big( z(t), \phi(t)\big) \d t &= -\int_s^T \int_{\Omega} \nabla y(t)(\zeta) \cdot \nabla \phi(t)(\zeta) \d \zeta \d t + \int_s^T \int_{\Omega} g\big( y(t)(\zeta) \big) \, \phi(t)(\zeta) \d \zeta \d t  \notag \\
&\quad + \int_s^T \int_{\Omega} h(\zeta)v(t) \, \phi(t)(\zeta)  \d \zeta \d t 
\end{align}
for every $\phi \in L^2([s,T],H^1_0(\Omega)) \cap L^p([s,T], L^p(\Omega))$. 
See~\cite{VaKa06}, \cite{KaVa09} or~\cite{DaKaSc19} and, for more background information, \cite{ChVi96} or~\cite{ChVi}. In this equation, $(\cdot,\cdot \cdot)$ stands for the dual pairing of $H^1_0(\Omega)^* + L^q(\Omega)$ and $H^1_0(\Omega) \cap L^p(\Omega)$, that is, 
\begin{align} \label{eq:dual-pairing}
(z,\phi) = (z_1,\phi)_{H^1_0(\Omega)^*,H^1_0(\Omega)} + (z_2,\phi)_{L^q(\Omega),L^p(\Omega)}
\end{align}
for every $z = z_1+z_2 \in H^1_0(\Omega)^* + L^q(\Omega)$ and $\phi \in H^1_0(\Omega) \cap L^p(\Omega)$, where $(\cdot,\cdot\cdot)_{H^1_0(\Omega)^*,H^1_0(\Omega)}$ and $(\cdot,\cdot\cdot)_{L^q(\Omega),L^p(\Omega)}$ denote the respective dual pairings. See, for instance, \cite{BeLoe} (Theorem~2.7.1) 
for this duality.
We point out that if $y$ is a global weak solution to~\eqref{eq:ibvp-v}, then for every $T \in (s,\infty)$ there is only one $z \in L^2([s,T],H^1_0(\Omega)^*) + L^q([s,T], L^q(\Omega))$ satisfying~\eqref{eq:def-weak-sol}. And this $z$ is called the \emph{weak or generalized derivative} of $y|_{[s,T]}$. It is denoted by $\partial_t y|_{[s,T]}$ or simply by $\partial_t y$ in the following. 
\smallskip

\textcolor{black}{It is straightforward to verify that Condition~\ref{cond:g,h,weak} implies that conditions (2), (3) and~(4) with $M=0$ from~\cite{VaKa06} are satisfied (note that to the function $f$ from~\cite{VaKa06} corresponds $-g$, the negative of our nonlinearity $g$). In particular, the integrals on the right-hand side of~\eqref{eq:def-weak-sol} are all well-defined. Additionally, by the remarks following~(5) in~\cite{VaKa06} and by Remark~1 from~\cite{VaKa06}, the initial boundary value problem~\eqref{eq:ibvp-v}, for every $s \in \R^+_0$ and every $(y_s,v) \in X \times L_{\mathrm{loc}}^2(\R^+_0,\R)$ has a unique global weak solution provided that Condition~\ref{cond:g,h,weak} is satisfied. }

\begin{cor} \label{cor:wAG+ISpS,weak}
Suppose that Condition~\ref{cond:g,h,weak} is satisfied \textcolor{black}{for some $p \in [2,\infty)$}. Then
\begin{itemize}
\item[(i)] the global weak solutions of~\eqref{eq:react-diffus-eq} generate a semiprocess family $(S_u)_{u\in \mathcal{U}}$ on $X$ with input space $\mathcal{U} := \mathcal{U}_{2 r_0} = L^{\infty}(\R^+_0,\R) \cap \ol{B}_{r_0}^{L^{\infty}}(0)$ and $r_0 < \infty$
\item[(ii)] $S_{\{0\}}$ has a unique global attractor $\Theta$ and $(S_u)_{u\in \mathcal{U}}$ is of asymptotic gain and input-to-state practically stable w.r.t.~$\Theta$. 
\end{itemize}
\end{cor}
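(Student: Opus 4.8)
The plan is to verify the hypotheses of Corollary~\ref{cor:wAG} (for the asymptotic gain) and of Proposition~\ref{prop:ISpS} (for input-to-state practical stability), in complete parallel to the mild-solution arguments of Proposition~\ref{prop:spec-ISpS-mild} and Theorem~\ref{thm:spec-wAG-mild}, but now working with the global weak solutions from~\cite{VaKa06}. First I would record that the global weak solvability and uniqueness ensured under Condition~\ref{cond:g,h,weak} let us set $S_v(t,s,x_s) := y(t-s,x_s,v(\cdot+s))$, where $y$ is the global weak solution of~\eqref{eq:ibvp-v}; the semiprocess identities and the translation property then follow from uniqueness exactly as around~\eqref{eq:from-dyn-syst-with-inputs-to-semiproc-family}, which gives part~(i). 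Note that here the state space $X = L^2(\Omega)$ is a separable Hilbert space, hence reflexive, and $U = \R$ is separable and reflexive, so the structural hypotheses of Corollary~\ref{cor:wAG} are met.

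Second, I would establish the dissipation estimate~\eqref{eq:dissip-estimate,wAG-cor}. Testing the weak formulation~\eqref{eq:def-weak-sol} with $\phi = y$ --- which is legitimate for this class of solutions, since the regularity of $y$ together with its generalized derivative yields the energy equality $\ddt{\norm{y(t)}^2} = 2(\partial_t y(t), y(t))$ --- and then invoking the upper bound $g(r)r \le \kappa - \alpha_2 |r|^p$ from~\eqref{eq:cond-g}, the Poincar\'e inequality $\norm{\nabla y}^2 \ge \lambda_1 \norm{y}^2$ with $\lambda_1 > 0$ the smallest eigenvalue of the negative Dirichlet Laplacian, and Young's inequality on the term $\int_{\Omega} h\,v\,y \,\d\zeta$, one arrives at a differential inequality of the form
\begin{align*}
\ddt{\norm{y(t)}^2} \le -2\omega_0 \norm{y(t)}^2 + C' + C'' \norm{v(t)}_U^2
\end{align*}
with suitable constants $\omega_0 \in (0,\infty)$ and $C', C'' \in \R^+_0$. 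Gr\"onwall's lemma together with the bound $\int_0^t \norm{v(s)}_U^2 \d s \le t\,\norm{u}_{\infty}^2$ from Lemma~\ref{lm:V(u)} then yields the exponential estimate~\eqref{eq:dissip-estimate,wAG-cor} --- and hence also~\eqref{eq:ISpS-assumption-1} --- uniformly over $v \in \mathcal{V}(u)$.

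Third, and this is the core of the argument, I would verify the compactness condition~(ii) of Corollary~\ref{cor:wAG}: if $x_n \to x$ weakly in $X$ and $v_n \to v$ weakly in $L^2_{\mathrm{loc}}(\R^+_0,U)$ with $v_n \in \mathcal{V}(u_n)$, $v \in \mathcal{V}(u)$, then $S_{v_n}(t_0,0,x_n) \to S_v(t_0,0,x)$ strongly in $X$ for every $t_0 \in (0,\infty)$. The energy estimates provide uniform bounds on the solutions $y_n$ in $L^{\infty}([0,T],L^2(\Omega)) \cap L^2([0,T],H^1_0(\Omega)) \cap L^p([0,T],L^p(\Omega))$ together with a uniform bound on $\partial_t y_n$ in the dual space featuring in~\eqref{eq:def-weak-sol}; an Aubin--Lions--Simon compactness argument then produces a subsequence converging strongly in $L^2([0,T],L^2(\Omega))$ and almost everywhere. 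Using this almost-everywhere convergence and the growth and monotonicity bounds $g'(r) \le \lambda$ from~\eqref{eq:cond-g}, I can pass to the limit in~\eqref{eq:def-weak-sol} and identify the limit as $S_v(\cdot,0,x)$; uniqueness upgrades this to convergence of the full sequence.

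The remaining, genuinely delicate, point is to promote this to \emph{strong} convergence at the single time $t_0$ in $X$. Since $X$ is a Hilbert space, it suffices to combine the weak convergence $y_n(t_0) \to y(t_0)$ in $X$ with convergence of norms $\norm{y_n(t_0)} \to \norm{y(t_0)}$, and I expect this norm convergence to be the main obstacle. It should be obtained by the energy-equality method, writing the energy identity for $y_n$ and for $y$ and letting $n \to \infty$ with the help of the convergences already secured; here the structural condition $g'(r) \le \lambda$, i.e.\ the monotonicity of $r \mapsto g(r) - \lambda r$, is precisely what controls the sign of the nonlinear contribution and prevents a loss of energy in the limit. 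Once condition~(ii) is in place, Corollary~\ref{cor:wAG} delivers the unique global attractor $\Theta$ of $S_{\{0\}}$ together with the asymptotic gain property, and Proposition~\ref{prop:ISpS}, supplied with the dissipation estimate from the second step and with this attractor, delivers input-to-state practical stability, completing part~(ii).
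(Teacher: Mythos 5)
Your proposal follows essentially the same route as the paper's proof: construct the semiprocess families $(S_v)_{v\in\mathcal{V}(u)}$ from the unique global weak solutions of~\eqref{eq:ibvp-v} (the paper, like you, takes existence and uniqueness from Section~2 of~\cite{VaKa06}), verify the dissipation assumption~(i) of Corollary~\ref{cor:wAG} by testing with (an exponentially weighted copy of) the solution itself and applying Poincar\'e's inequality, the upper bound $g(r)r \le \kappa - \alpha_2|r|^p$ and Young's inequality, verify the compactness assumption~(ii), and then invoke Corollary~\ref{cor:wAG} and Proposition~\ref{prop:ISpS}. The one structural difference is that where you sketch the compactness argument in detail (uniform energy bounds, Aubin--Lions, a.e.\ convergence, identification of the limit using the monotonicity of $r\mapsto g(r)-\lambda r$, and upgrading to strong convergence at the single time $t_0$ via the energy-equality method in the Hilbert space $X$), the paper simply cites Lemma~15 of~\cite{VaKa06}; your sketch is the standard content of that lemma, so spelling it out is a legitimate, more self-contained variant rather than a different method.

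One step, as literally written, would fail and needs repair. In the Gr\"onwall stage you invoke only the bound $\int_0^t \norm{v(s)}_U^2 \d s \le t\,\norm{u}_{\infty}^2$; inserting this crude $[0,t]$-bound into the variation-of-constants estimate for $\int_0^t \e^{-2\omega_0(t-s)}\norm{v(s)}_U^2 \d s$ produces a term growing linearly in $t$, not a $t$-uniform gain term, so the claimed exponential estimate~\eqref{eq:dissip-estimate,wAG-cor} does not follow from that bound alone. You must use the full strength of~\eqref{eq:estimate-for-transl-bounded-norm} from Lemma~\ref{lm:V(u)} --- namely $\int_s^t \norm{v}_U^2 \le (t-s)\norm{u}_{\infty}^2$ on \emph{every} subinterval --- and split $[0,t]$ into unit intervals, which yields $\int_0^t \e^{-2\omega_0(t-s)}\norm{v(s)}_U^2 \d s \le (1-\e^{-2\omega_0})^{-1}\norm{u}_{\infty}^2$. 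This is exactly the estimate~(2.28) of~\cite{ChVi96} that the paper uses at the corresponding point of its proof. With this correction, your argument is complete and matches the paper's.
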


\begin{proof}
We show in three steps that the assumptions of Corollary~\ref{cor:wAG} -- and hence also those of Proposition~\ref{prop:ISpS} -- are satisfied. (It should be noticed that the  existence of a unique global attractor of the undisturbed system $S_0$ -- albeit a consequence of Corollary~\ref{cor:wAG} -- is well-known from~\cite{Robinson} (Theorem~11.4), for instance.)
\smallskip

As a first step, we show that the global weak solutions of the initial boundary value problems~\eqref{eq:ibvp-v} for every $u \in \mathcal{U}$ generate a semiprocess family $(S_v)_{v\in\mathcal{V}(u)}$.
Indeed, it is easy to conclude from the remarks in Section~2 of~\cite{VaKa06} (up to Remark~1) that for every $s \in \R^+_0$ and every $(y_s,v) \in X \times L^2_{\mathrm{loc}}(\R^+_0,\R)$ the initial boundary value problem~\eqref{eq:ibvp-v} has a unique global weak solution $y(\cdot,s,y_s,v)$. 
It is also easy to conclude from the definition and the uniqueness of global weak solutions that $(S_v)_{v\in\mathcal{V}(u)}$ defined by
\begin{align*}
S_v(t,s,y_s) := y(t,s,y_s,v) \qquad (v \in \mathcal{V}(u))
\end{align*}
for $u \in \mathcal{U}$ is a semiprocess family on $X$. 
\smallskip

As a second step, we show that the semiprocess families $(S_v)_{v\in\mathcal{V}(u)}$ satisfy the dissipativity assumption~(i) from Corollary~\ref{cor:wAG}. 
Indeed, let $\omega \in (0,\infty)$ be the optimal (largest) constant from Poincar\'{e}'s inequality, choose $\omega' \in (0,\omega)$ and set 
\begin{align} \label{eq:wAG+ISpS,weak,omega_0,def}
\omega_0 := \omega-\omega' \in (0,\infty).
\end{align}
Also, let $u \in \mathcal{U}$ and $(y_0,v) \in X \times \mathcal{V}(u)$ be fixed and write $y := y(\cdot,0,y_0,v) = S_v(\cdot,0,y_0)$ for brevity. It then follows from~\cite{VaKa06} (Section~2) that $t \mapsto \e^{2\omega_0 t} \norm{y(t)}^2$ is absolutely continuous (hence differentiable almost everywhere) with
\begin{align} \label{eq:wAG+ISpS,weak,derivative}
\ddt \Big( \e^{2\omega_0 t} \frac{\norm{y(t)}^2}{2} \Big) = \e^{2\omega_0 t} \Big( \big( \partial_t y(t), y(t) \big) + \omega_0 \norm{y(t)}^2 \Big) 
\end{align}
for almost every $t \in \R^+_0$, where $(\cdot,\cdot\cdot)$ is the dual pairing from~\eqref{eq:dual-pairing}. So, integrating~\eqref{eq:wAG+ISpS,weak,derivative} and applying the definition of weak solutions with $\phi := \e^{2\omega_0 \cdot} y|_{[0,T]}$, we see that
\begin{align*}
&\e^{2\omega_0 T} \frac{\norm{y(T)}^2}{2} - \frac{\norm{y_0}^2}{2}
= \int_0^T \e^{2\omega_0 t} \big( \partial_t y(t), y(t) \big) \d t + \omega_0 \int_0^T \e^{2\omega_0 t} \norm{y(t)}^2 \d t \\
&\quad= -\int_0^T  \e^{2\omega_0 t} \int_{\Omega} |\nabla y(t)(\zeta)|^2 \d\zeta \d t + \int_0^T  \e^{2\omega_0 t} \int_{\Omega} g\big(y(t)(\zeta)\big) y(t)(\zeta) \d\zeta \d t \\
&\qquad + \int_0^T  \e^{2\omega_0 t} \int_{\Omega} h(\zeta) v(t) y(t)(\zeta) \d\zeta \d t + \omega_0 \int_0^T \e^{2\omega_0 t} \norm{y(t)}^2 \d t
\end{align*}
for every $T \in (0,\infty)$. Applying Poincar\'{e}'s inequality, the second inequality from~\eqref{eq:cond-g}, and Young's inequality 
in the form $2 \norm{h} |v(t)| \cdot \norm{y(t)} \le \omega' \norm{y(t)}^2 + \norm{h}^2 |v(t)|^2/\omega'$, we further see that
\begin{align} \label{eq:wAG+ISpS,weak,central-estimate}
\e^{2\omega_0 T} \frac{\norm{y(T)}^2}{2} - \frac{\norm{y_0}^2}{2} 
&\le \big( -(\omega - \omega') + \omega_0 \big) \int_0^T \e^{2\omega_0 t} \norm{y(t)}^2 \d t + \int_0^T \e^{2\omega_0 t} \kappa |\Omega| \d t \notag \\
&\qquad + \int_0^T \e^{2\omega_0 t} \frac{\norm{h}^2}{2 \omega'} |v(t)|^2 \d t
\end{align}
for every $T \in (0,\infty)$. So, as the prefactor of the first integral vanishes by~\eqref{eq:wAG+ISpS,weak,omega_0,def} and as $\int_0^T \e^{2\omega_0 t} |v(t)|^2 \d t \le (1-\e^{-2\omega_0})^{-1} \e^{2\omega_0 T} \norm{u}_{\infty}^2$ by~(2.28) from~\cite{ChVi96} and by~\eqref{eq:estimate-for-transl-bounded-norm}, we conclude from~\eqref{eq:wAG+ISpS,weak,central-estimate} that
\begin{align}
\norm{y(T)}^2 
&\le \e^{-2\omega_0 T} \norm{y_0}^2 + \frac{\kappa |\Omega|}{\omega_0} + \frac{\norm{h}^2}{\omega'} (1-\e^{-2\omega_0})^{-1} \norm{u}_{\infty}^2 \notag \\
&\le \big( \e^{-\omega_0 T} \sigma(\norm{y_0}) + \gamma(\norm{u}_{\infty}) \big)^2 
\qquad (T \in [0,\infty)),
\end{align}
where $\sigma(r) := r$ and $\gamma(r) := C_1 r + C_2$ with $C_1^2 := \frac{\norm{h}^2}{\omega'} (1-\e^{-2\omega_0})^{-1}$ and $C_2^2 := \frac{\kappa |\Omega|}{\omega_0}$. 
\smallskip

As a third and last step, we observe 
that the semiprocess families $(S_v)_{v\in\mathcal{V}(u)}$ satisfy the compactness assumption~(ii) from Corollary~\ref{cor:wAG}.
Indeed, this follows in the same way as Lemma~15 of~\cite{VaKa06}. 
\end{proof}

\section*{Acknowledgements}
\textcolor{black}{We would like to thank the anonymous reviewers for their useful remarks that helped us improve the paper.}
S. Dashkovskiy and O. Kapustyan are partially supported by the German Research Foundation (DFG) and the State Fund for Fundamental Research of Ukraine (SFFRU) through the joint German-Ukrainian grant ``Stability and robustness of attractors of nonlinear infinite-dimensional systems with respect to disturbances'' (DA 767/12-1). 

\begin{small}

\end{small}

\end{document}